\tikzset{
	commutative diagrams/.cd, 
	arrow style=tikz, 
	diagrams={>=stealth}
}
\theoremstyle{definition}
\newenvironment{customthm}[1]
{\innercustomthm}
{\endinnercustomthm}
\newcommand{\colim@}[2]{%
	\vtop{\m@th\ialign{##\cr
			\hfil$#1\operator@font colim$\hfil\cr
			\noalign{\nointerlineskip\kern1.5\ex@}#2\cr
			\noalign{\nointerlineskip\kern-\ex@}\cr}}%
}
\newcommand{\colim}{%
	\mathop{\mathpalette\colim@{\rightarrowfill@\textstyle}}\nmlimits@
}
\def\@tocline#1#2#3#4#5#6#7{\relax
	\ifnum #1>\c@tocdepth 
	\else
	\par \addpenalty\@secpenalty\addvspace{#2}%
	\begingroup \hyphenpenalty\@M
	\@ifempty{#4}{%
		\@tempdima\csname r@tocindent\number#1\endcsname\relax
	}{%
		\@tempdima#4\relax
	}%
	\parindent\z@ \leftskip#3\relax \advance\leftskip\@tempdima\relax
	\rightskip\@pnumwidth plus4em \parfillskip-\@pnumwidth
	#5\leavevmode\hskip-\@tempdima
	\ifcase #1
	\or\or \hskip 1em \or \hskip 2em \else \hskip 3em \fi%
	#6\nobreak\relax
	\dotfill\hbox to\@pnumwidth{\@tocpagenum{#7}}\par
	\nobreak
	\endgroup
	\fi}
\newcounter{marginnote}
\DeclareMathAlphabet{\mathpzc}{OT1}{pzc}{m}{it}
\theoremstyle{definition}
\newtheorem{theorem}{Theorem}[subsection]
\newtheorem{corollary}[theorem]{Corollary}
\newtheorem{lemma}[theorem]{Lemma}
\newtheorem{proposition}[theorem]{Proposition}
\newtheorem{remark}[theorem]{Remark}
\newtheorem*{runningexample*}{Running example}
\newtheorem{constr}[theorem]{Construction}
\newtheorem{definition}[theorem]{Definition}
\newtheorem{example}[theorem]{Example}
\newtheorem{proposition-definition}[theorem]{Proposition-Definition}
\newenvironment{construction}    
{%
	\pushQED{\qed}\begin{constr}}
	{\popQED\end{constr}}
\newcommand{\RR}{\mathbb{R}}
\newcommand{\bcd}{\begin{center}\begin{tikzcd}}
	\newcommand{\ecd}{\end{tikzcd}\end{center}}
\newcommand{\A}{\mathbb{A}}
\newcommand{\cG}{\mathcal{G}}
\newcommand{\cY}{\mathcal{Y}}
\newcommand{\Spec}{\operatorname{Spec}}
\DeclareMathAlphabet{\mathpzc}{OT1}{pzc}{m}{it}
\NewDocumentCommand{\compatibilitydatum}{m m m m m m O{} O{} O{}}{
	\begin{equation*} \begin{tikzcd}[ampersand replacement=\&]
	\: \arrow{r} \& {#1} \arrow{r} \arrow{d}{#7} \& {#2} \arrow{r} \arrow{d}{#8} \& {#3} \arrow{r}{[1]} \arrow{d}{#9} \& \: \\
	\: \arrow{r} \& {#4} \arrow{r} \& {#5} \arrow{r} \& {#6} \arrow{r} \& \:
	\end{tikzcd} \end{equation*}}
\NewDocumentCommand{\commutingsquare}{m m m m o O{} O{} O{} O{}}{
	\begin{equation}\begin{tikzcd}[ampersand replacement=\&] \label{#5}
	#1 \arrow{r}{#6} \arrow{d}{#7} \& #2 \arrow{d}{#8} \\
	#3 \arrow{r}{#9} \& #4
	\end{tikzcd}\IfValueTF{#5}{\label{#5}}{} \end{equation}}
\NewDocumentCommand{\cartesiansquarelabel}{m m m m m O{} O{} O{} O{}}{
	\begin{tikzcd}[ampersand replacement=\&]
	#1 \arrow{r}{#6} \arrow{d}{#7} \arrow[dr, phantom, "\square"] \& #2 \arrow{d}{#8} \\
	#3 \arrow{r}{#9} \& #4
	\end{tikzcd}\IfValueTF{#5}{\label{#5}}{}
}
\NewDocumentCommand{\triangleofspaces}{m m m O{} O{} O{}}{
	\begin{tikzcd} [ampersand replacement=\&]
	#1 \arrow{r}{#4} \arrow[bend right]{rr}{#5} \& #2 \arrow{r}{#6} \& #3
	\end{tikzcd}}
 \newcommand{\cal}{\mathcal}
\def\cY{{\cal Y}}
\begin{document}
	\title{Logarithmic Quot spaces, boundedness, and $K$-tropicalizations}
	\author{Patrick Kennedy--Hunt {\it\&} Dhruv Ranganathan}

	\begin{abstract}
	Logarithmic Hilbert and Quot spaces are generalizations of their traditional versions adapted to study pairs and degenerations. The logarithmic Quot spaces of $(X,D)$ parameterize ``algebraically transverse'' (logarithmically flat) quotient sheaves on degenerations of $X$. We prove boundedness and deduce properness of logarithmic Quot spaces. The results complete the basic foundations of logarithmic Quot spaces and specialize to work of Li--Wu and Maulik and the second author in special cases. 

    Boundedness relies on two results of independent interest.  First, we show that for a simple normal crossing pair $(X, D)$ and a subscheme $Z$, there is a smallest logarithmic space ${X}^\flat$ modifying $X$ such that the strict transform of $Z$ is algebraically transverse. Precisely, given $Z\hookrightarrow X$, there is a canonical logarithmic space $X^\flat$ over $X$ with the following universal property -- an snc logarithmic blowup $X'\to X$ makes the strict transform of $Z$ algebraically transverse if and only if $X'$ is a modification of $X^\flat$. Parallel results hold for arbitrary coherent sheaves. This proves boundedness for logarithmic quotients with fixed tropicalization. 

    A logarithmic quotient sheaf defines a $K$-tropicalization, an enhancement of tropicalization that is sensitive to scheme  structures. The $K$-tropicalization has the same relationship to $K$-theory as traditional tropicalization has to Chow, and is related to Gr\"obner theory and convex geometry via state and secondary polytopes.
    Using the $K$-theory of toric bundles, we derive a balancing condition for $K$-tropicalizations that imposes strong finiteness properties. The second key result is that $K$-tropicalizations with fixed numerics are parametrized by a finite-dimensional polyhedral complex. 
	\end{abstract}

    \address{Patrick Kennedy-Hunt and Dhruv Ranganathan \\ Department of Pure Mathematics {\it \&} Mathematical Statistics\\
University of Cambridge, Cambridge, UK}
\email{\href{mailto:pfk21@cam.ac.uk}{pfk21@cam.ac.uk}}
\email{\href{mailto:dr508@cam.ac.uk}{dr508@cam.ac.uk}}

	\maketitle 
	\tableofcontents
 
\section*{Introduction}

\subsection{Context} The logarithmic Hilbert and Quot spaces, recently introduced by the first author, are generalizations of the ordinary Hilbert and Quot schemes that are well-suited to studying moduli of subschemes and quotients in degenerations, and more generally, on simple normal crossings pairs~\cite{Logquot}. Simple instances of the logarithmic Hilbert scheme, namely for subschemes of dimension $0$ and $1$, are basic objects in Donaldson--Thomas theory~\cite{LiWu,MR20,MR23} and have been used to striking effect, for instance in~\cite{PP17}. 

The purpose of this paper is to prove boundedness, and therefore properness, of the logarithmic Quot spaces in full generality, completing the foundational theory initiated in~\cite{Logquot}.

We start with an overview of the basic geometry of these spaces, and focus on the Hilbert space for simplicity. Let $(X,D)$ be a simple normal crossings pair. The simplest objects in the logarithmic Hilbert scheme $\mathsf{Hilb}(X|D)$ are subschemes
\[
Z\hookrightarrow X
\]
that are {\it algebraically transverse}, meaning that the functions cutting out the components of $D$ form regular sequences when pulled back to $Z$. Subschemes with this property are stable under blowups of $X$ along strata, in the sense that their strict and total transforms coincide. This makes them natural from the perspective of logarithmic geometry, where many natural constructions are expected to be invariant under strata blowups.  

The space of algebraically transverse subschemes is an open in the usual Hilbert scheme of $X$, and we denote it $\mathsf{Hilb}^\circ(X|D)$. The logarithmic Hilbert scheme is a larger moduli problem
\[
\mathsf{Hilb}^\circ(X|D)\hookrightarrow \mathsf{Hilb}(X|D)
\]
on the category of {\it logarithmic schemes}. The degenerate objects in $\mathsf{Hilb}(X|D)$ are subschemes
\[
Z\hookrightarrow \mathcal X
\]
of {\it expansions} of $X$ along $D$. An expansion is a reducible, reduced, normal crossings space, constructed by iterated deformation to the normal cone along the strata of $(X,D)$. In the expansion, algebraic transversality can be maintained component-by-component, and this provides a natural class of limits for families of algebraically transverse subschemes. The case of the Quot space is similar, see Section~\ref{sec: sheaves-on-expansions}. 

In recent work of the first author, these ideas are used to describe a logarithmic moduli problem parameterizing objects as above, subject to a certain equivalence relation. Basic properties of this moduli problem are shown there, including: (i) representability as a logarithmic space\footnote{A logarithmic space is a stack over logarithmic schemes that admits a logarithmically \'etale cover by a scheme. Informally, it becomes a stack with algebraic structure after a blowup. In our cases, there is a topological space associated to a logarithmic space, and each polyhedral complex structure on this space gives rise to a scheme theoretic model.  See~\cite{Logquot,MW23,MolchoWise} for examples and further discussion.}, (ii) separatedness, and (iii) universal closedness. It is also shown that each object in the moduli space has a tropicalization -- a piecewise linear stratification of the cone complex of $(X,D)$ that captures the complexity of the expansion needed to represent a subscheme. See~\cite{Logquot} and Section~\ref{sec: prelim} for further details.

\subsection{Main results} We show that the connected components of the logarithmic Quot schemes are of finite type, and conclude properness. Let $(X,D)$ be a simple normal crossings pair and let $\mathcal E$ be a coherent sheaf on $X$ that is logarithmically flat over the Artin fan of $(X|D)$. 

\begin{customthm}{A}[Boundedness]\label{thm: boundedness-quot-scheme}
Every connected component of $\mathsf{Quot}^{\sf log}(X|D,\mathcal E)$ is of finite type, and therefore proper. The space parameterizing quotients with fixed Hilbert polynomial with respect to an ample on $X$ is proper. 
\end{customthm}

The study of the logarithmic Quot scheme is part of a broader picture, to understand how moduli of stable sheaves behave under snc degenerations. While such a theory has not taken its final form, we expect the result above will form the basis for boundedness of these putative moduli problems. When $D$ is a smooth divisor, the result specializes to a well-known result of Li--Wu~\cite{LiWu}.

The theorem relies on two key results. The first is geometric. Given any subscheme $Z\hookrightarrow X$ of a pair $(X,D)$, there is a blow-up $X'\to X$ along strata such that the strict transform $Z'$ is algebraically transverse; for a construction of such transversalizations, see~\cite{Logquot,tevelev2005compactifications,TevNotes,TV21}. 

We show, in the category of logarithmic spaces, there is a unique smallest blowup of $(X,D)$ such that the strict transform of $\mathcal F$ is algebraically transverse, with the caveat that this blowup is a logarithmic space, rather than a scheme or algebraic stack. We prove a parallel result for coherent sheaves. 

The result is easiest to state in the language of cone complexes. An snc pair $(X,D)$ has a cone complex $\Sigma$, and iterated blowups along smooth strata correspond to iterated stellar refinements of $\Sigma$, see~\cite{KKMSD}.

\begin{customthm}{B}[Canonical transversalization]\label{thm: canonical-transversalization}
Let $(X,D)$ be a simple normal crossings pair with fan $\Sigma$ and let $\mathcal F$ be a coherent sheaf on $X$. There is a canonical piecewise linear stratification $\Sigma^\flat_{\mathcal F}$ such that, if 
\[
\Sigma'\to \Sigma
\]
is a smooth subdivision inducing $X'\to X$, then the strict transform $\mathcal F'$ of $\mathcal F$ is algebraically transverse if and only if $\Sigma'$ refines $\Sigma^\flat_{\mathcal F}$. 
\end{customthm}

Equivalently, there exists a logarithmic space $X^\flat_{\mathcal F}\to X$ such that all snc logarithmic blowups that algebraically transversalize $\mathcal F$ are blowups of $X^\flat$. 

The piecewise linear stratification in the theorem is geometrically meaningful. For each integral point of $\Sigma$, subdivide by introducing a ray through this point and record the intersection of the strict transform of $\mathcal F$ with the interior of the corresponding exceptional divisor -- a torus bundle over a stratum of $(X,D)$. We then stratify according to the sheaf on this torus bundle, see Section~\ref{sec: prelim}. The analogue of this result in setting when $D$ is smooth is trivial, and this result is one of the essential difficulties in going beyond the setting considered by Li--Wu~\cite{LiWu}.

The theorem is used to prove boundedness of the logarithmic Quot space, but the statement is purely about the coherent sheaf, not the presentation as a quotient. We therefore expect it to play a role in further study of logarithmic moduli of coherent sheaves. 

The result is in the spirit of the famous Raynaud--Gruson result on flattening by blow--ups~\cite{RG71}, and might be viewed as such a statement over the Artin fan of $X$. Again, it is crucial to work with logarithmic spaces that may not be representable by schemes, and so cannot follow from the classical statement alone. The subtlety seems to stem from the non-properness of the map $X\to\mathsf A(X,D)$.

Theorem~\ref{thm: canonical-transversalization} is a $K$-theoretic enhancement of a foundational cycle-theoretic result in tropical geometry. A subscheme $Z\hookrightarrow X$ is {\it dimensionally transverse} if intersections with strata of $X$ have expected dimension\footnote{Algebraic transversality implies dimensional transversality, but the converse does not hold in general.}. The usual tropicalization $Z^{\sf trop}$ is a piecewise linear stratification of $\Sigma$ such that for $X'\to X$ a blowup as above, the strict transform $Z'\hookrightarrow X'$ is dimensionally transverse if and only if $\Sigma'$ refines $Z^{\sf trop}$, see~\cite[Section~14]{Gub:guideToTrop}\footnote{In~\cite{Gub:guideToTrop} Gubler states and proves an equivalent formulation of this result and attributes it to Payne.} and~\cite{ulirsch2014tropical}. The construction depends only on the point $[Z]$ in the Chow variety of $X$, and information about non-reduced or embedded components is irrelevant. For subschemes or sheaves, the natural transversality is {\it algebraic} transversality, and Theorem~\ref{thm: canonical-transversalization} is the necessary strengthening.

A key property of traditional tropicalization is that the maximal strata are endowed with multiplicities and these multiplicities satisfy a balancing condition. The multiplicities are computed from the intersection theory of strata of $X'$ with $Z'$.  The stratification occurring in Theorem~\ref{thm: canonical-transversalization} is decorated by multiplicities on {\it all} strata, given by Euler characteristics or Hilbert polynomials. We call this decorated object a {\it $K$-tropicalization}, and every point of the logarithmic Quot scheme has a $K$-tropicalization. We establish a replacement for the balancing condition using $K$-theory and prove the following. 

\begin{customthm}{C}[$K$-tropicalizations are bounded]\label{thm: tropical-finiteness}
The set of combinatorial types $K$-tropicalizations that can arise from logarithmic quotient sheaves with prescribed numerical data is finite. 
\end{customthm}

The ``numerical data'' is locally constant data that arise from Hilbert polynomials of the sheaves restricted to strata. On a component of an expansion of $X$ along $D$, the polarizing line bundle on $X$ need not be ample. The Hilbert polynomial is therefore a weaker invariant than it is in the standard theory of Hilbert schemes. The theorem gives a combinatorial compensation for this weakening. 

Precisely, the result guarantees that only finitely many tropicalizations, and so expansions, arise from any logarithmic Quot scheme with fixed numerical data. The previous theorem guarantees that quotients with given $K$-tropicalization can be parameterized by finite type open subscheme an ordinary Quot schemes. This is sufficient to conclude boundedness.

When $X$ is toric with torus $T$, the finiteness result for $K$-tropicalization is closely related to some remarkable combinatorial objects at the intersection of commutative algebra and polyhedral combinatorics -- the Chow, state, and secondary polytopes~\cite{KSZ92,Sturmfels96}. Given a subscheme $Z\hookrightarrow X$ its {\it state polytope} is the polytope\footnote{The construction depends on a choice of ample on the Hilbert scheme, and there are, in truth, many state polytopes depending on the polarization.} of the $T$-orbit closure of $[Z]$ in the ordinary Hilbert scheme of $X$. Its dual fan is the {\it Gr\"obner fan} of $Z$. The strata in the $K$-tropicalization are unions of cells in the Gr\"obner fan, and space of $K$-tropicalizations with fixed numerics is closely related to the secondary polytope of the state polytope. 

\subsection{Overview} We highlight a few key ideas in the paper. 

\subsubsection{Sheaves with a fixed tropicalization} Let $\Gamma$ be a polyhedral decomposition of the cone complex $\Sigma$ of $(X,D)$. There is an associated expanded degeneration 
\[
p\colon X_\Gamma\to X.
\]
Fix $\mathcal E$ an algebraically flat coherent sheaf on $X$ and a quotient $[p^\star \mathcal E\twoheadrightarrow \mathcal F]$, with $\mathcal F$ algebraically transverse. 

The sheaf $\mathcal F$ has discrete invariants. Fix an ample $\mathcal O_X(1)$ on $X$. There is a natural flat specialization from $X$ to $X_\Gamma$, and each stratum of $X$, including $X$ itself, undergoes an expansion, and therefore gives ``broken stratum'', i.e. a union of strata, of $X_\Gamma$. The Hilbert polynomials of the restrictions of $\mathcal F$ to these broken strata, with respect to the pullback $p^\star \mathcal O(1)$ give numerical data for the sheaf.

If $\Gamma'\to \Gamma$ is a refinement of polyhedral decompositions, the pullback of $\mathcal F$ under
\[
X_{\Gamma'}\to X_\Gamma
\]
is another algebraically transverse sheaf on an expansion of $X$, and since the polarization is pulled back, the numerical data is unchanged. The natural equivalence relation on the logarithmic Quot scheme identifies these objects\footnote{This can be viewed as a kind of sheaffification in the topology of logarithmic modifications, see~\cite{Nak17,MW23} and references therein.}. 

To prove boundedness, we show that for each $\mathcal F$ on $X_\Gamma$, there is a ``minimal model'', from which $\mathcal F$ is pulled back, and then control the space of possible minimal models with fixed numerics. It is crucial not just that minimal models exist, but that they can be described in sufficiently explicit terms to be bounded. Theorem~\ref{thm: canonical-transversalization}, guarantees that -- after passing to the category of logarithmic spaces -- minimal models exist and the proof shows they are computed by $K$-tropicalizations. 

Fixing the $K$-tropicalization, and choosing arbitrarily a scheme theoretic proper birational model of it, the space of transverse quotient sheaves is easily seen to be parameterized by an open subscheme of an ordinary Quot scheme. 

\subsubsection{Tropicalizations of transverse sheaves} A $K$-tropicalization fixes the minimal models of expansions $X_\Gamma\to X$ and the numerical $K$-theory class on it, at least up to a finite choice, that a quotient sheaf must occupy. For boundedness, we show these $K$-tropicalizations come in finitely many combinatorial types. 

We provide some context for results of this kind. The simplest is for ordinary tropicalizations of curves in toric varieties -- balanced $1$-dimensional polyhedral complexes in a vector space. Already here the results are far from elementary. If the vector space has dimension $2$ finiteness is provided by the combinatorics of the secondary polytope of Gelfand, Kapranov, and Zelevinsky~\cite{GKZ,Mi03}. For curves in higher-dimensional vector spaces, finiteness can be reduced to this case by an elegant projection argument, due to Nishinou--Siebert~\cite{NishSieb06}. See also the work of Yu~\cite{Yu14}.  The balancing condition is the essential input for these finiteness statements, and plays a key role in boundedness of logarithmic stable map spaces~\cite{ACMW17,gross2012logarithmic}.

We deal with two new phenomena. The milder one is that we allow tropical objects of arbitrary dimension. Secondary polytope arguments can be adapted to this setting, are essentially about boundedness for the as-yet-undefined logarithmic Chow variety. But this only deals with the simplest part of tropical boundedness. The second, more serious, issue is that $K$-tropicalizations are more complicated combinatorial objects than ordinary tropicalizations. A $K$-tropicalization can have ``embedded components'' -- polyhedral complexes of lower dimension embedded inside faces of higher dimension. The naive balancing condition actually {\it fails} in these lower dimensional faces. When dealing with $1$-dimensional subschemes, the embedded components are just points, so the phenomenon does not cause serious issues~\cite{LiWu,MR20}. 

We find a replacement for the balancing condition for $K$-tropicalizations. The insight (and the source of the name) is that the $K$-theory of toric varieties and toric fiber bundles provide the enhancement of the ordinary balancing condition. We extract these conditions and use them to prove Theorem~\ref{thm: combinatorial-boundedness}.

\subsection{$K$-tropicalizations} We believe that $K$-tropicalizations are natural in tropical geometry. To give a sense of how they enhance usual tropicalizations, consider a subscheme $Z$ of a torus $\mathbb G_m^r$. Its traditional tropicalization is a subset of the cocharacter lattice of $\mathbb G_m^r$. Its maximal faces, with respect to any polyhedral complex structure, carry a multiplicity. From our perspective, it is more natural to view the set theoretic tropicalization as an equivalence class of polyhedral complex structures on that set, where the equivalence relation is generated by subdivisions. Since we can always take common refinements, this is actually no more information than the set, but is the perspective that generalizes well.

The $K$-tropicalization is not a set-theoretic object; it is an equivalence class of polyhedral structures on a set---indeed, on the same underlying set as the ordinary tropicalization. But unlike the ordinary tropicalization, not all polyhedral structures are allowed: the scheme structure of $Z$ selects distinguished ones coming from the algebraic transversality condition. These special polyhedral structures, up to equivalence under subdivision, define the \emph{support} of the $K$-tropicalization. 

Each cell of the $K$-tropicalization, regardless of dimension, carries an integer-valued decoration. The traditional decoration on maximal faces is always positive. The $K$-decorations are Euler characteristics and so can be negative, and bounding the negativity is an implicit difficulty throughout. This additional information is geometrically natural -- for instance, the $K$-tropicalization determines the Hilbert polynomial of the closure of $Z$ in any toric variety with respect to any polarization.

\subsection{Future directions} Boundedness completes the foundations of logarithmic Quot spaces -- with the results of this paper, we now know they are proper logarithmic spaces with tropicalization given by a moduli space of $K$-tropical objects.  But as noted, the paper and its precursors~\cite{Logquot,MR20,MR23} are part of a broader study of moduli of sheaves under snc degenerations -- a ``logarithmic moduli space of stable sheaves''. A theory along these lines would certainly have potential applications in enumerative geometry, and perhaps find use more broadly. It seems likely that every object in this proposed logarithmic sheaf theory appears in the logarithmic Quot space. The next stage in the development is a study of how these sheaves interact with Bridgeland or Gieseker stability conditions. The logarithmic Picard group is another piece of this story~\cite{MolchoWise}. 

There is an immediate potential application in enumerative geometry. The Quot schemes of surfaces carry virtual classes, and the associated enumerative invariants were studied by Oprea and Pandharipande~\cite{OpreaPandharipande}. They conjecture a structure for these invariants with striking similarities to the Donaldson--Thomas theory of threefolds. It is reasonable to hope that these conjectures can be generalized to pairs, and related to the original ones by degeneration formulas, analogous to~\cite{MR23}. 

Another direction, within tropical geometry, is a systematic study of $K$-tropicalizations and their relationship to state, Chow, and secondary polytopes~\cite{KSZ92,Sturmfels96}. Another interesting link is with tropical schemes. The $K$-tropicalization remembers the Hilbert polynomial, and another object that does this is the scheme theoretic tropicalization constructed in~\cite{GG16}. The latter seems to contain more information, but it seems reasonable to guess that the scheme theoretic tropicalization recovers the $K$-tropicalization. In the toric case, the $K$-tropicalization is also closely related to operational $K$-theory, $K$-theoretic Minkowski weights, the polytope algebra, and piecewise exponential functions, parallel to the Chow story~\cite{AP15,KP08,Mor93,Shah22}. We leave further exploration to the interested reader.

\subsection{Roadmap} The paper may be read largely independently of the foundational work~\cite{Logquot}, subject to a few blackboxes for which we give working expositions. We do direct the reader to the introduction of that paper for the motivation and summary of the theory as it stands prior to this one. Section~\ref{sec: prelim} contains a reminder of the basics of logarithmic Quot spaces. We also collect some facts about $K$-theory of coherent sheaves at the end of Section~\ref{sec: prelim}. In Section~\ref{sec: transverse-collapse} we prove Theorem~\ref{thm: canonical-transversalization}. In Section~\ref{sec: K-tropicalizations} we explain what $K$-tropicalizations are and use the $K$-theory of toric fiber bundles to derive a version of the balancing condition. We include a geometric treatment of traditional balancing, and develop tools to help analyze the $K$-balancing condition. In Section~\ref{sec: finiteness-K} we show that $K$-balancing implies the combinatorial boundedness in Theorem~\ref{thm: tropical-finiteness}. We put the pieces together in the final section and conclude. 

\subsection{Conventions and terminology} Schemes and stacks appearing here will be locally of finite type over an algebraically closed field. All logarithmic structures will be fine and saturated, and typically obtained by an explicit morphism to an Artin fan. We assume familiarity with the basics of logarithmic geometry, and refer the reader to the introductory sections of~\cite{MR20,MR23}, of~\cite{ModStckTropCurve}, or to~\cite{ACMUW} for background. 

\subsection{Acknowledgements} We have benefited from numerous conversations with friends and colleagues. We thank L. Battistella, D. Bejleri, E. Dodwell, J. Laga, D. Maclagan, S. Payne, T. Scholl, and J. Wise for soundboarding, interest, and encouragement. We also thank B. Siebert, M. Talpo, and R. Thomas for their interest, and for keeping us informed of their insights on the problem considered here. Parts of this document originate in the PhD thesis of the first author, and we are very grateful to M. Gross and M. Ulirsch, who examined the thesis, for detailed reading and comments that contributed to the work here. The second author extends a warm thanks to D. Maulik for the collaboration~\cite{MR20} and countless discussions on the topics presented here. 

\noindent 
DR was supported by EPSRC New Investigator Grant EP/V051830/1 and EPSRC Frontier Research Grant EP/Y037162/1.

\section{Preliminaries}\label{sec: prelim}

We begin with some reminders on algebraically transverse sheaves and the logarithmic Quot functor, as well as some reminders on $K$-theory. 

\subsection{Algebraically transverse sheaves on expansions}\label{sec: sheaves-on-expansions} Let $(X,D)$ be a pair consisting of a smooth projective variety $X$ and a simple normal crossings divisor $D$. Denote the divisor components by $D_1,\ldots, D_k$. There is an associated Artin fan $\mathsf A(X,D)$ -- a zero dimensional smooth Artin stack. It is an open substack of $[\mathbb A^k/\mathbb G_m^k]$. There is a canonical map
\[
X\to [\mathbb A^k/\mathbb G_m^k]
\]
given locally by choosing equations for the components of $D$. The stack $\mathsf A(X,D)$ is the smallest open substack through which this map factors. 

\begin{definition}
A coherent sheaf $\mathcal E$ on $X$ is called {\it algebraically transverse} if $\mathcal E$ is flat over $\mathsf A(X,D)$.
\end{definition}

One might also call such sheaves {\it logarithmically flat}, but we prefer the more geometric terminology, consistent with~\cite{MR20}.

\begin{remark}[Basics of algebraic transversality]
We make a few remarks to give the reader a feel for algebraic transversality. For details see~\cite[Section~3]{Logquot}. To keep the notation compact, we focus on subschemes, rather than general quotients, but parallel statements hold. 

Let $Z\hookrightarrow X$ be an algebraically transverse subscheme of $(X,D)$. If $X'\to X$ is a logarithmic blowup, then the strict transform $Z'$ of $Z$ is algebraically transverse and equal to the total transform\footnote{See~\cite[\href{https://stacks.math.columbia.edu/tag/080C}{Tag 080C}]{stacks-project} for the definition of a strict transform for a coherent sheaf, for use in that case.}. This follows from stability of flat maps under base change and recognizing the total transform as a pullback along $\mathsf A(X',D')\to\mathsf A(X,D)$. 

Another consequence is that $Z$ coincides with the closure\footnote{The closure of a coherent sheaf is not well-defined by the closure of a quotient by a fixed coherent sheaf is always well-defined, see Remark~\ref{rem: closures}.} of $Z|_{X\setminus D}$. More generally, if $W\subset X$ is a closed stratum and $W^\circ$ is the corresponding locally closed stratum, then transversality implies the equality of subschemes:
\[
Z\cap W = \overline{Z\cap W^\circ}.
\]
\end{remark}

Algebraically transverse quotients of algebraically transverse coherent sheaves form the interior of the logarithmic Quot space. In the boundary, objects are represented -- non-uniquely -- by sheaves on expanded degenerations, following~\cite{Logquot,MR20}.\footnote{The definition of an expansion is a little different from~\cite{MR20}. That paper is specialised to $0$ and $1$-dimensional subschemes, so the expansions are non-proper assumed to not have codimension $2$ strata. } We recall the ingredients of this.

Recall that $(X,D)$ determines a cone complex $\Sigma(X,D)$, given by a union of faces in $\mathbb R_{\geq 0}^k$. There is a continuous map from the topological space of $X$ to the partially ordered set\footnote{A partially ordered set is a topological space in the Alexandrov topology -- open sets are uppersets.} of coordinate faces of $\mathbb R_{\geq 0}^k$, recording for each point the collection of divisors that contain that point. The cone complex $\Sigma(X,D)$ is the smallest union of faces containing the image of $X$.

A subdivision $\Sigma'\to \Sigma(X,D)$ of cone complexes determines a birational modification
\[
X'\to X
\]
by standard toroidal geometry~\cite{KKMSD}. We use this is as follows. Consider a subdivision of $\Sigma(X,D) \times\mathbb R_{\geq 0}$ such that the preimage of $0$ is unchanged. A subdivision $\pi\colon \Sigma'\to\RR_{\geq 0}$ of this form is determined by the preimage $\pi^{-1}(1)$, which is a polyhedral complex $\Gamma$ that decomposes $\Sigma(X,D)$. It will be useful to flexibly pass between $\Sigma'$ and this slice. The slice is slightly more convenient when discussing examples and drawing pictures. 

A polyhedral decomposition $\Gamma$ as above determines a family 
$$\mathcal X_\Gamma\to X\times\mathbb A^1\to \mathbb A^1.$$ 
Note that this is a toroidal family in the sense of~\cite{KKMSD} and so inherits a logarithmic structure. 

\begin{definition}
Let $(X,D)$ be a simple normal crossings pair with cone complex $\Sigma$. A {\it rough expansion} of $X$ along $D$ is the fiber over $0$ of an expansion $\mathcal X_\Gamma\to X\times\mathbb A^1\to \mathbb A^1$, and is denoted $X_\Gamma$. It is an {\it expansion} if it is reduced. 

We equip $0$ with the pullback of the toric logarithmic structure on $\mathbb A^1$ making it $\Spec \mathbb C_{\mathbb N}$ and equip $X_\Gamma$ with the pullback logarithmic structure from the toroidal structure on $\mathcal X_\Gamma$. 
\end{definition}

The reducedness condition is equivalent to the integrality of the coordinates of the vertices of $\Gamma$. 

The polyhedral complex $\Gamma$ is determined by the logarithmic structure on an expansion -- we can perform the construction directly on the Artin fan $\mathsf A(X,D)$ and so obtain an expansion
\[
\mathsf A_\Gamma\to\mathsf A(X,D)\times[\mathbb A^1/\mathbb G_m].
\]
The pullback of $\mathcal B\mathbb G_m$ in $[\mathbb A^1/\mathbb G_m]$ is a stack of dimension $-1$; we denote it $\mathsf B_\Gamma$. There is a smooth map
\[
X_\Gamma\to \mathsf B_\Gamma
\]
fitting into a pullback diagram 
\[
\begin{tikzcd}
X_\Gamma\arrow{r}\arrow{d} & \mathcal X_\Gamma\arrow{d}\\
\mathsf B_\Gamma\arrow{r} & \mathsf A_\Gamma.
\end{tikzcd}
\]
Note that $\mathsf B_\Gamma$ is not quite an Artin fan, and fails the definition in two ways -- it is reducible, and its components are $\mathbb G_m$-gerbes over Artin fans. 

When we speak of an expansion, we will have a logarithmic structure on it, so the Artin fan, tropicalization, and the stack $\mathsf B_\Gamma$, can be reconstructed -- the expansion $X_\Gamma$ knows the map to $\mathsf B_\Gamma$. 

\begin{definition}
A coherent sheaf $\mathcal F$ on an expansion $X_\Gamma$ is {\it algebraically transverse} if $\mathcal F$ is flat over $\mathsf B_\Gamma$. 
\end{definition}

\subsubsection{Dilations, subdivisions, and pullbacks} Consider an tropical expansion
\[
\Sigma'\to \RR_{\geq 0}
\]
with slice $\Gamma$. Given a morphism
\[
\mathbb R_{\geq 0}\to \mathbb R_{\geq 0}
\]
sending $1$ to $m$ we can pull back $\Sigma'$ to obtain 
\[
\Sigma''\to \RR_{\geq 0}. 
\]
From the point of view of slices, this amounts to dilating the polyhedral complex $\Gamma$. 

The reason for doing this comes from semistable reduction. After dilation, there are more lattice points in the polyhedral complex and so there are more subdivisions that give rise to expansions, rather than only rough expansions. This is a key combinatorial procedure in~\cite{KKMSD}. The $1\mapsto m$ map described above is a ramified base change on $\mathbb A^1$ of order $m$. 

If $\Gamma'$ is a subdivision of a dilation of $\Gamma$, there is an associated morphism of expansions
\[
X_{\Gamma'}\to X_\Gamma
\]
and since flatness is preserved by arbitrary base change, an algebraically transverse sheaf $\mathcal F$ pulls back to an algebraically transverse sheaf $\mathcal F'$ on $X_\Gamma'$. 

We informally refer to maps $X_{\Gamma'}\to X_\Gamma$ determined by dilation--subdivisions by as {\it further subdivisions}. 

\subsection{The logarithmic Quot functor} One can perform the constructions of the previous sections in families by using a little logarithmic geometry. Equip $(X,D)$ with its divisorial logarithmic structure and let $S$ be a logarithmic scheme. A family of {\it expansions} of $X$ along $D$ over $S$ is a diagram
\[
\begin{tikzcd}
(X\times S)_\Gamma\arrow{dr} \arrow{rr}& & X\times S\arrow{dl}\\
&S&
\end{tikzcd}
\]
such that (i) $(X\times S)_\Gamma\to X\times S$ is a logarithmic modification, (ii) $(X\times S)_\Gamma\to S$ is flat with reduced fibers, and (iii) for any morphism $\Spec \mathbb C_{\mathbb N}\to S$ from the standard logarithmic point, the pullback of $(X\times S)_\Gamma$ is an expansion. 

A {\it family of algebraically transverse sheaves over $(X\times S)_\Gamma$} is a flat family of sheaves $\mathcal F$ over $S$ whose restriction to every fiber is algebraically transverse. 

Families of algebraically transverse sheaves, presented as quotients of a fixed sheaf, are the basic objects parameterized by the logarithmic Quot functor. The last ingredient is an equivalence relation. 

\begin{definition}\label{def: quot-equivalence}
Let $\mathcal E$ be an algebraically transverse sheaf on $X$. Consider two families of expansions
\[
p\colon (X\times S)_\Gamma\to S \ \ \textnormal{and} \ \ p'\colon (X\times S)_{\Gamma'}\to S.
\] 
Consider quotients $[p^\star \mathcal E\twoheadrightarrow \mathcal F]$ and $[{p'}^\star\mathcal E\twoheadrightarrow \mathcal F']$ with $\mathcal F$ and $\mathcal F'$ algebraically transverse. Such families of quotients are {\it equivalent}, if there is a common logarithmic modification $(X\times S)_{\Gamma''}$ of the both total spaces, such that the pullbacks of these two quotients coincide. 
\end{definition}

An equivalence class of algebraically transverse quotients of $p^\star\mathcal E$ is called a {\it logarithmic quotient of $\mathcal E$} or a {\it logarithmic surjection of coherent sheaves}. 

The logarithmic Quot functor is the functor on logarithmic schemes whose value on a logarithmic scheme $S$ is the collection of equivalence classes of families of families logarithmic quotients of $\mathcal E$. 

We denote the logarithmic Quot functor by $\mathsf{Quot}(X|D,\mathcal E)$ or simply $\mathsf{Quot}(X,\mathcal E)$ when $D$ is clear from context. When $\mathcal E$ is $\mathcal O_X$, we use the notation $\mathsf{Hilb}(X|D)$. 

\subsection{Piecewise linear spaces and the Gr\"obner stratification}\label{sec: pl-G-strat}

Working with the equivalence relation described in the previous section naturally suggests a combinatorial gadget -- a {\it piecewise linear space} -- replacing cone complexes, that represents the equivalence class. Objects of this kind appear in two basic ways, first at the level of the objects of the moduli problem and second for the moduli space itself. We only require the former, which is easier to treat in isolation. For the latter, we note that there is a canonical piecewise linear stack that admits a subdivision by a cone complex, and each subdivision gives rise to an algebraic stack structure on the logarithmic Quot space $\mathsf{Quot}(X|D,\mathcal E)$. We take this as a black box. 

The ``official'' definition of a piecewise linear space is given in~\cite{Logquot}. For us, the following will suffice. We can understand a cone complex $\Sigma$ as specifying a topological space  with a locally closed stratification, strata being interiors of cones. A subdivision 
\[
\Sigma'\to \Sigma
\]
is a refinement of this stratification. 

\begin{definition}
A {\it piecewise linear space} subdividing $\Sigma$ is a stratification $\mathscr T$ of $\Sigma$ such that the closure of a stratum is a linear space in a cone of $\Sigma$ with the property that $\mathscr T$ can be refined by a cone complex subdivision $\Sigma'\to\Sigma$. 
\end{definition}

Again, in practice, this is applied to $\Sigma\times\mathbb R_{\geq 0}$.

Let $p\colon X_\Gamma\to X$ be an expansion of $X$ along $D$ over $\Spec\mathbb C_{\mathbb N}$, where $\Gamma$ is a polyhedral subdivision of $\Sigma$. Let $[q\colon p^\star\mathcal E\twoheadrightarrow \mathcal F]$ on $X_\Gamma$ be an algebraically transverse quotient. Following ideas of Cartwright~\cite{Cartwright}, the first author defines a canonical piecewise linear space subdividing $\Sigma\times \RR_{\geq 0}$, written as: 
\[
\mathscr T(q)\to \Sigma \times \RR_{\geq 0}.
\]
The rigorous construction is best done using commutative algebra and Gr\"obner theory for modules of sections of $\mathcal F$. The basic idea is as follows. Each point $w$ of $\Sigma \times \RR_{\geq 0}$ determines  a torus torsor over a locally closed stratum of $(X,D)$, given by subdividing $\Sigma\times\mathbb R_{\geq 0}$ by adding the ray $(w,1)$, and then deleting all strata except the locally closed interior of the exceptional. Denote the latter by $E_w$. 

The {\it initial degeneration at $w$} is the restriction to $E_w$ of the pullback of $\mathcal F$. The initial degenerations for different points $w$ and $w'$ in the interior of the same cone of $\Sigma\times \{1\}$ gives rise to torus torsors $E_w$ and $E_{w'}$. These are isomorphic as torsors, though not as bundles. See for instance~\cite{PayFibers}. 

We maintain the notation above.

\begin{definition}
The {\it Gr\"obner stratification} or {\it tropical support} of the quotient $[q]$ is the piecewise linear space $\mathscr T(q)$ defined by the property that two points $w$ and $w'$ in the interior of the same cone lie in the same stratum if and only if their initial degenerations coincide. 
\end{definition}

The Gr\"obner stratification, as defined, is a piecewise linear stratification of $\Sigma\times\mathbb R_{\geq 0}$. As previously, we can take the slice over $1$ produce a piecewise affine strati of $\Sigma$. Examples of such objects include {\it polyhedral}, not necessarily conical, decompositions of $\Sigma$.

\begin{remark}[Constant degeneration]
The Gr\"obner stratification will also be used in the following simpler context -- called the ``constant coefficient case'' in tropical geometry. Let $X'\to X$ be a logarithmic blowup, with $X'$ simple normal crossings and let $q$ be an algebraically transverse quotient of the pullback of $\mathcal E$. The Gr\"obner stratification can be defined either by crossing with $\Spec \mathbb C_{\mathbb N}$, or by using weighted blowups in place of the $(w,1)$-blowups in the definition above. In either case, the result is a piecewise linear space that admits a refinement by a {\it cone complex} rather than just a polyhedral complex.  
\end{remark}

A feature of the construction is that 
\[
\mathsf{Cone}(\Gamma)\to \Sigma\times  \RR_{\geq 0}
\]
is a refinement of $\mathscr T(q)$ . As with subdivisions by polyhedral complexes, it is convenient when drawing pictures to slice the picture at height $1$ in the last factor, to obtain an affine linear stratification of $\Sigma$. 

\begin{remark}[Tropical supports in this text]
To make this paper self-contained, we briefly explain how tropical supports and piecewise linear spaces will arise in this paper. Given an algebraically transverse sheaf $\mathcal F$ on $X_\Gamma\to X$, we can ask whether there is a minimal subdivision from which $\mathcal F$ is pulled back. A natural guess, motivated by the geometry, is that the subdivision is given by the tropical support. Since the tropical support is not a cone complex, the best statement one can hope for is that it can be pulled back from any tropical expansion subdividing it. This is the first result that we prove, see Section~\ref{sec: transverse-collapse}.

Second, a key statement from~\cite{Logquot} is the existence of a (functorial) map from families of logarithmic quotients to families of tropical supports. The map is determined what it does to $\Spec \mathbb C_{\mathbb N}$-points of the functor, and its well-definedness is checked there.

The upshot is that the assignment of the Gr\"obner stratification to a logarithmic quotient defines a morphism from the logarithmic Quot space onto a piecewise linear stack of ``tropical supports''. We show that the set of tropical supports in the image of a component of the logarithmic Quot space is finite. We decorate the tropical support with additional data to form the {\it $K$-tropcialization} defined shortly. We show there are  finitely many combinatorial types of $K$-tropicalizations with given numerical data. 
\end{remark}

\subsection{A $K$-theory toolbox}\label{sec: toolbox} We require basic facts about the Grothendieck groups of coherent sheaves. These will be used to study $K$-balancing on the tropicalization coherent sheaf, see Section~\ref{sec:TropDecorations}. 

\subsubsection{Definitions} Let $X$ be a quasi-projective scheme. The Grothendieck group $K_\circ(X)$ of coherent sheaves on $X$ is generated by classes $[\mathcal F]$ for each coherent sheaf on $X$, with the relation $[\mathcal F] = [\mathcal F']+[\mathcal F'']$ for each exact sequence
\[
0\to \mathcal F'\to\mathcal F\to\mathcal F''\to 0.
\]
The functor $X\mapsto K_\circ(X)$ is covariant for proper maps, with pushforward given by $f_\star[\mathcal F] = \sum (-1)^i [R^if_\star\mathcal F]$. Given a sheaf $\mathcal F$ on a $X$ that is proper over $\Spec k$, then
\[
f_\star[\mathcal F] = \chi(X,\mathcal F)\in K_\circ(\Spec k) = \mathbb Z.
\]
If $f\colon Y\to X$ is a {\it flat} morphism, then pullback of coherent sheaves is exact, and so descends to:
\[
f^\star\colon K_\circ(X)\to K_\circ(Y). 
\]
If $X$ is projective and $H$ is a very ample line bundle, then the Hilbert polynomial of a coherent sheaf
\[
\mathcal F\mapsto P_X(\mathcal F,t)\in \mathbb Z[t], \ \ \ P_X(\mathcal F,n) = \chi(X,\mathcal F\otimes H^n)
\]
is additive in short exact sequences and so gives a homomorphism $K_\circ(X)\to \mathbb Z[t]$. 

When $X$ is smooth, the group $K_\circ(X)$ coincides with the Grothendieck group $K^\circ(X)$ of vector bundles. In this case, there is a natural ring structure on $K_\circ(X)$ given by derived tensor product. Equivalently, one can write every class in terms of alternating sums of vector bundles and take ordinary tensor product. 

We occasionally need these results for Artin stacks. The Artin stacks we need to work with are always global quotients of toric varieties by tori, so it suffices to note that all of this can be done equivariantly. 

\subsubsection{A few useful facts}\label{sec: basic-formulae} We will use a few formulae in $K$-theory. First, if $D\subset X$ is a Cartier divisor in $X$, the ideal sheaf exact sequences gives
\[
[\mathcal O_D] = 1-[\mathcal O_X(-D)] \ \textnormal{ in } K_\circ(X).
\]
We will later be in the situation where $D$ is reducible, and would like to relate $[\mathcal O_D]$ to classes of the irreducible components. This can be done by ``inclusion/exclusion''. 

\noindent
{\bf Inclusion/exclusion formulas.} Suppose $X_0$ is a simple normal crossings variety. For our purposes, all such varieties appear as unions of boundary divisors in snc pairs. Suppose that $X_0 = Y_1\cup Y_2$. By explicitly comparing structure sheaves, we obtain an exact sequence
\[
0\to \mathcal O_{X_0}\to\mathcal O_{Y_1}\oplus\mathcal O_{Y_2}\to \mathcal O_{Y_1\cap Y_2}\to 0
\]
and therefore the formula
\[
[\mathcal O_{X_0}] = [\mathcal O_{Y_1}]+[\mathcal O_{Y_2}]-[\mathcal O_{Y_1\cap Y_2}].
\]
If $X_0$ is simple normal crossings and has irreducible components $W_1,\ldots, W_m$ be the irreducible components, we can inductively obtain
\[
[\mathcal O_{X_0}] = \sum_{I\subset [m]} (-1)^{|I|} \cdot [\mathcal O_{W_I}],
\]
where $W_I$ is the intersection of $W_j$ for $j\in I$, and the sum ranges over all subsets. 

We use this in the following way. Consider the Artin fan $[\mathbb A^m/\mathbb G_m^m]$ and write $\mathsf B_0$ be the union of toric boundary divisors in it. The formula above holds expressing the structure sheaf of $\mathsf B$ in terms of its components, e.g. by observing that the formula holds in the equivariant theory of $\mathbb A^n$. 

We will also find ourselves in the following situation. We will have a space $\mathcal Y_0$ with a flat map
\[
\mathcal Y_0\to \mathsf B_0,
\]
and a coherent sheaf $\mathcal F_0$ on $\mathcal Y_0$ that is flat over $\mathsf B_0$. The space $\mathcal Y_0$  will have components $Y_i$. Since the sheaf is flat, we can pull back the formula for the structure sheaf of $\mathsf B_0$ and tensor wth $\mathcal F_0$. Denoting $\mathcal G_I$ for the restriction of $\mathcal F_0$ to $Y_I$, we have the following $K$-theory formula:
\[
[\mathcal F_0]= \sum_{I\subset [m]} (-1)^{|I|} \cdot [\mathcal G_I].
\]
When $Y_0$ is proper we can pass this to Euler characteristic. 

\noindent
{\bf Regular crossings divisors.} A useful generalization of normal crossings arises in our analysis, which Aluffi dubs ``regular crossings''.

\begin{definition}
Let $S$ be a scheme and $D_1,\ldots,D_m$ be Cartier divisors with union $D$. We say that the pair $(S,D)$ is {\it regular crossings}\footnote{The reader is warned of notational danger -- in the simple normal crossings case, one can recover the $D_i$ from $D$ by taking irreducible components, whereas that is not the case here. The decomposition of $D$ into $D_i$ is part of the data but suppressed from the notation.} if for each point $s\in S$, equations $f_1,\ldots,f_m$ cutting out the $D_i$ locally near $s$ form regular sequences in $\mathcal O_{S,s}$. 
\end{definition}

In other words, the divisors ``intersect completely'' at every point. Unlike in the simple normal crossings case, there are no smoothness restrictions. The regular crossings property is equivalent to the moduli map
\[
S\to [\mathbb A^m/\mathbb G_m^m]
\]
being flat~\cite[\href{https://stacks.math.columbia.edu/tag/07DY}{Tag 07DY}]{stacks-project}. If the map is smooth, then the pair is simple normal crossings. The key point for us -- an algebraically transverse subscheme of an snc pair is a regular crossings pair with respect to the pullback. 

The following property is useful.

\begin{lemma}
If $(S,D = \sum D_i)$ is a regular crossings pair then $(S,\sum a_iD_i)$ for any positive integers $a_i$ is also regular crossings.
\end{lemma}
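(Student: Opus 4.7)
The plan is to use the flatness characterization of regular crossings mentioned just above the lemma: $(S, D = \sum D_i)$ is regular crossings if and only if the moduli map
\[
\phi\colon S \to [\Aaff^m/\Gm^m]
\]
classifying the pairs $(\OO_S(D_i), s_i)$ is flat. Since $\OO_S(\sum a_iD_i)$ is canonically $\OO_S(D_i)^{\otimes a_i}$ on each factor, with section $s_i^{\otimes a_i}$, the analogous moduli map $\phi'$ for $(S, \sum a_i D_i)$ factors as
\[
\phi'\colon S \xrightarrow{\phi} [\Aaff^m/\Gm^m] \xrightarrow{p_{\mathbf{a}}} [\Aaff^m/\Gm^m],
\]
where $p_{\mathbf{a}}$ is the stack morphism induced by the power map $(x_1,\dots,x_m) \mapsto (x_1^{a_1}, \dots, x_m^{a_m})$ on atlases, equivariant under the $a_i$-th power homomorphisms of the $\Gm$-factors. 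It therefore suffices to show $p_{\mathbf{a}}$ is flat, since composition of flat maps will then close the argument.

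For flatness of $p_{\mathbf{a}}$ I would base change along the canonical smooth cover $\Aaff^m \to [\Aaff^m/\Gm^m]$ of the target. A short unwinding of the fiber product identifies this base change with the quotient of $\Aaff^m$ by $\prod \mu_{a_i}$, whose natural atlas over $\Aaff^m$ is the power map $\Aaff^m \to \Aaff^m$, $(x_i) \mapsto (x_i^{a_i})$. This map presents the source coordinate ring as a free module of rank $\prod a_i$ over the target, hence is finite flat; so $p_{\mathbf{a}}$ is flat.

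A purely commutative-algebraic alternative is to work stalk-wise: since regular crossings is a local property, the claim reduces to the classical statement that if $f_1,\dots,f_m$ is a regular sequence in a Noetherian local ring then so is $f_1^{a_1},\dots,f_m^{a_m}$. This can be proved by iteratively replacing one element at a time with a power, invoking the Noetherian-local permutability of regular sequences to recycle the hypothesis after each substitution; the single-step replacement $f \rightsquigarrow f^a$ is an easy induction on $a$ using that $f$ is a non-zero-divisor modulo the remaining generators.

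The only real subtlety is the atlas-level flatness computation for $p_{\mathbf{a}}$, which is standard but slightly fussy because of the need to track the $\Gm^m$-equivariance; the commutative-algebra route avoids this at the cost of quoting the classical lemma on powers of regular sequences. Either way, I do not expect any additional surprises beyond what is indicated above.
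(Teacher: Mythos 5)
Your proposal is correct and matches the paper's proof essentially verbatim: the paper also gives both routes, namely the classical fact that powers of a regular sequence form a regular sequence, and the factorization of the moduli map through the coordinatewise power map $[\mathbb A^m/\mathbb G_m^m]\to[\mathbb A^m/\mathbb G_m^m]$. The only cosmetic difference is that the paper justifies flatness of the power map by miracle flatness (zero-dimensional fibers), whereas you verify it by exhibiting $k[x_1,\ldots,x_m]$ as a free module of rank $\prod a_i$ over $k[x_1^{a_1},\ldots,x_m^{a_m}]$ after base change to the atlas; both are fine.
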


\begin{proof}
If $f_1,\ldots,f_m$ form a regular sequence, then so do powers $f_i^{a_i}$ for any positive integers $a_i$. Alternatively, the map
\[
[\mathbb A^m/\mathbb G_m^m]\to [\mathbb A^m/\mathbb G_m^m]
\]
given by taking $a_i$-power in the $i^{th}$ coordinate is flat. Indeed, all fibers are dimension $0$, so this follows from miracle flatness. Compose with the map $S\to [\mathbb A^m/\mathbb G_m^m]$ that defines $D$ to conclude.
\end{proof}

The inclusion/exclusion formula for $K$-theory classes can be applied to any $X_0$ that is a union of Cartier divisors with regular crossings. 

\noindent
{\bf Multiplicities}. Several of our computations will take place in the $K$-theory of a smooth and projective toric variety (or in toric variety bundles over a base). The relations in this theory are provided by the fact that the line bundle on a toric variety $X$ associated to a character is trivial (or pulled back from the base, in the bundle case). In the toric case, for instance, the relation can also be expressed as an equality
\[
[\mathcal O_{ \sum a_i D_i}] =  [\mathcal O_{\sum b_j D_j}]
\]
of effective torus invariant Cartier divisors. Since the $D_i$ have simple normal crossings, the divisors $\sum a_i D_i$ and $\sum b_j D_j$ have regular crossings, and their $K$-theory classes can be understood via inclusion/exclusion as above. 

We would like to go further, to relate the structure sheaf of $a_i D_i$ to that of $D_i$ itself. 

The following discussion works on any scheme $X$ and Cartier divisor $D$. Let $m$ be a positive integer. Notice that $[\mathcal O_{mD}]$ is not, in general, equal to $m\cdot [\mathcal O_{D}]$. However, they are equal ``to first order''. 

Set $L = \mathcal O_X(-D)$. Using the ideal sheaf sequence for $\mathcal O_{mD}$, we have
\[
\mathcal O_{mD} = (1- L^m) = \mathcal O_D(1+L+\cdots+L^{m-1}). 
\]
Replace $L$ with $(1-\mathcal O_D)$, and induct on $m$ to get a formula of the form:
\[
\mathcal O_{mD}  = m\cdot \mathcal O_{D}+[\text{higher order terms in } \mathcal O_D].
\]
Higher order terms here are things of the form $[\mathcal O_D\otimes^{\bf L}\mathcal O_D]$, and further derived tensor powers. The precise formulae are left to the reader as they are not important for us, but for concreteness we have:
\[
\mathcal O_{2D} = 2[\mathcal O_D]-[\mathcal O_D\otimes^{\bf L}\mathcal O_D], \ \ \ \mathcal O_{3D} = 3[\mathcal O_D]-3[\mathcal O_D]^2+[\mathcal O_D]^3.
\]

The derived tensor powers should be viewed as {\it higher codimension} terms. They may be computed by using linear equivalences of divisors. If $\mathcal O_X(-D)$ is isomorphic to $\mathcal O_X(-D')$ such that $D$ and $D'$ meet algebraically transversely, i.e. all higher Tor's between $\mathcal O_D$ and $\mathcal O_{D'}$ vanish, then
\[
[\mathcal O_D]^2 = [\mathcal O_{D\cap D'}].
\]
A useful observation is that these higher order terms can be expressed as linear combinations of structure sheaves of known higher codimension strata. In the toric setting, we can represent $[\mathcal O_D]^k$ as a linear combination of positive  codimension strata of $D$, viewed as a toric variety in its own right. 

In the toric bundle setting, an analogous statement is true, where the strata are supported on intersections of pullbacks of hypersurfaces from the base and higher codimension toric strata. We will give explicit formulas in Section~\ref{sec: K-tropicalizations}.

\noindent
{\bf Bundles and blowups.} We close our toolbox by recording a couple of basic facts about bundles and blowups. We start with projective bundles.

\begin{lemma}
Let $X$ be a projective variety and let $p\colon \mathbb P_X\to X$ be a projective bundle. Suppose $\mathcal F$ is a sheaf on $X$. Then there is an equality of Euler characteristics:
\[
\chi(X,\mathcal F) = \chi(\mathbb P_X,p^\star\mathcal F). 
\]
\end{lemma}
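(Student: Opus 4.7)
The plan is to reduce the claim to the standard cohomological computation for projective bundles by pushing forward along $p$, then applying the Leray spectral sequence. Since $p$ is a projective bundle, it is proper and flat with fibers isomorphic to some projective space $\mathbb P^r$, so all higher direct images we need to handle are well-behaved.

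The first step is to compute $R^\bullet p_\star \mathcal O_{\mathbb P_X}$. By cohomology and base change, this is controlled by the cohomology of $\mathcal O$ on the fibers. For a projective space $\mathbb P^r$ one has $H^0(\mathbb P^r,\mathcal O)=\kfield$ and $H^i(\mathbb P^r,\mathcal O)=0$ for $i>0$, and these vanishings are uniform in the fiber, so
\[
p_\star \mathcal O_{\mathbb P_X} = \mathcal O_X, \qquad R^i p_\star \mathcal O_{\mathbb P_X} = 0 \text{ for } i>0.
\]
The second step is to feed this into the projection formula. Since $p$ is flat, $p^\star \mathcal F$ is just the ordinary pullback, and the projection formula gives
\[
R^i p_\star (p^\star \mathcal F) \;=\; \mathcal F \otimes R^i p_\star \mathcal O_{\mathbb P_X},
\]
so $p_\star(p^\star \mathcal F) = \mathcal F$ and the higher direct images vanish.

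The third step invokes the Leray spectral sequence $E_2^{a,b} = H^a(X, R^b p_\star(p^\star \mathcal F)) \Rightarrow H^{a+b}(\mathbb P_X, p^\star \mathcal F)$. Because the higher direct images vanish, the spectral sequence collapses on the bottom row and gives an isomorphism $H^a(\mathbb P_X, p^\star \mathcal F) \cong H^a(X, \mathcal F)$ for every $a$. Taking the alternating sum of dimensions yields the desired equality $\chi(X,\mathcal F) = \chi(\mathbb P_X, p^\star \mathcal F)$. Equivalently, one can argue $K$-theoretically: $p_\star p^\star[\mathcal F] = [\mathcal F]$ in $K_\circ(X)$, and proper pushforward is compatible with Euler characteristics via the map to a point.

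I do not anticipate a serious obstacle — the only thing to be careful about is that the statement is phrased for a general coherent sheaf $\mathcal F$ (no flatness required) on a possibly singular $X$, but the argument above only uses coherence of $\mathcal F$, flatness of $p$, and the fiberwise cohomology of $\mathcal O$ on projective space, so it goes through verbatim. If the paper wants to use this in the toric bundle setting of Section~\ref{sec: K-tropicalizations} with more general $p$ (e.g.\ a toric bundle whose fiber has $\chi(\mathcal O)=1$ and trivial higher cohomology of $\mathcal O$), the same proof still applies.
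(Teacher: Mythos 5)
Your argument is correct and is essentially the paper's own proof: both compute $Rp_\star\mathcal O_{\mathbb P_X}=\mathcal O_X$ via cohomology and base change and the fiberwise cohomology of projective space, then conclude by the projection formula (your Leray step, and your closing $K$-theoretic remark $p_\star p^\star[\mathcal F]=[\mathcal F]$, are exactly how the paper phrases the final step). No gaps.
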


\begin{proof}
We want to calculate the push forward of the class $[p^\star \mathcal F\otimes \mathcal O_{\mathbb P_X}]$ to a point. The map $p$ has connected fibers so $p_\star \mathcal O_{\mathbb P_X}$ is $\mathcal O_X$. By cohomology and base change, all higher $R^ip_\star\mathcal O_{\mathbb P_X}$ vanish, so the $K$-theory pushforward $p_\star [\mathcal O_{\mathbb P_X}]$ is $[\mathcal O_X]$. The result follows from the projection formula in $K$-theory. 
\end{proof}

We will also need control over blowups. 

\begin{lemma}\label{lem: blowup-formula}
Let $(X,D)$ be a simple normal crossings pair and let $\mathcal F$ be a sheaf on $X$ that is algebraically transverse to $D$. Let $p\colon X'\to X$ be the blowup of $X$ at a stratum of $D$. Then there is an equality of Euler characteristics:
\[
\chi(X,\mathcal F) = \chi(X',p^\star\mathcal F). 
\]
\end{lemma}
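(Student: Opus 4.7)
The plan is to reduce the equality of Euler characteristics to the standard vanishing statement $Rp_\star\mathcal O_{X'} = \mathcal O_X$ together with the projection formula in the derived category, making crucial use of algebraic transversality to identify the underived and derived pullbacks of $\mathcal F$.

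First, I would note that because $X$ is smooth and the center $W \subset D$ is a smooth stratum, the blowup $p\colon X' \to X$ is the blowup of a smooth variety along a smooth center. For such a blowup it is classical that $p_\star \mathcal O_{X'} = \mathcal O_X$ and $R^ip_\star \mathcal O_{X'} = 0$ for $i > 0$; equivalently, $Rp_\star \mathcal O_{X'} = \mathcal O_X$ in the derived category. This can be verified by a direct cohomological computation on the projective bundle exceptional divisor, or by invoking the fact that smooth varieties have rational singularities.

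Next I would analyze the pullback $p^\star \mathcal F$. Since $\mathcal F$ is algebraically transverse, it is flat over $\mathsf A(X,D)$. The blowup $p$ is pulled back from a morphism of Artin fans $\mathsf A(X',D') \to \mathsf A(X,D)$, so $p^\star\mathcal F$ remains flat over $\mathsf A(X',D')$. In particular, the ordinary (sheaf-theoretic) pullback $p^\star\mathcal F$ coincides with the derived pullback $Lp^\star\mathcal F$: all higher $\mathrm{Tor}$ sheaves vanish. This is the only step where algebraic transversality is used, and it is where the geometric hypothesis enters the $K$-theoretic bookkeeping.

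Finally I would apply the projection formula:
\[
Rp_\star\bigl(Lp^\star \mathcal F\bigr) \;=\; \mathcal F \otimes^{\mathbf{L}}_{\mathcal O_X} Rp_\star \mathcal O_{X'} \;=\; \mathcal F \otimes^{\mathbf{L}}_{\mathcal O_X} \mathcal O_X \;=\; \mathcal F.
\]
Combining with the identification $Lp^\star\mathcal F = p^\star\mathcal F$ and passing to Euler characteristics (which are additive on distinguished triangles and invariant under $Rp_\star$ because $p$ is proper), we obtain
\[
\chi(X',p^\star\mathcal F) \;=\; \chi\bigl(X,Rp_\star p^\star\mathcal F\bigr) \;=\; \chi(X,\mathcal F).
\]
There is no genuine obstacle here; the only thing to be careful about is that ``pullback'' of a sheaf along a blowup must be interpreted so that the projection formula yields the underived sheaf $\mathcal F$, and algebraic transversality is precisely what guarantees this coincidence. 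The same argument extends verbatim to iterated blowups along strata and, by the inclusion/exclusion toolkit of Section~\ref{sec: basic-formulae}, to the general logarithmic blowups used elsewhere in the paper.
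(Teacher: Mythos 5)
Your proof is correct, but it takes a genuinely different route from the paper's. You argue cohomologically: $Rp_\star\mathcal O_{X'}=\mathcal O_X$ for the blowup of a smooth variety along a smooth center, the projection formula gives $Rp_\star Lp^\star\mathcal F=\mathcal F$, and algebraic transversality enters exactly once, to identify $Lp^\star\mathcal F$ with $p^\star\mathcal F$ --- this identification is valid because $X'=X\times_{\mathsf A(X,D)}\mathsf A(X',D')$ with $X\to\mathsf A(X,D)$ flat, so flatness of $\mathcal F$ over the Artin fan kills the higher Tor sheaves. The paper instead runs the degeneration to the normal cone $\mathsf{Bl}_{W\times 0}(X\times\mathbb A^1)\to\mathbb A^1$, notes that the pulled-back sheaf is flat over this family (again because everything is pulled back from Artin fans), uses constancy of Euler characteristic in flat families to compare the general fiber with the special fiber, and then evaluates the special fiber contribution via the inclusion/exclusion formula and the projective-bundle lemma from Section~\ref{sec: basic-formulae}. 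Your argument is shorter and more self-contained for this particular statement; the paper's argument is chosen because the same degeneration-plus-inclusion/exclusion machinery is reused verbatim in the harder Euler characteristic comparisons on expansions (e.g.\ in the proof of Proposition~\ref{prop: candidate-is-special-fiber}), where a clean projection-formula shortcut is not available. Your closing claim that the argument ``extends verbatim to iterated blowups'' is fine, but the extension to general (non-smooth-center) logarithmic modifications would require more care than ``verbatim,'' since $Rp_\star\mathcal O_{X'}=\mathcal O_X$ is no longer automatic for arbitrary proper birational toroidal maps without a further argument.
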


\begin{proof}
Let $W\subset X$ be the stratum. Consider the degeneration to the normal cone of $W$ in $X$, namely
\[
\mathsf{Bl}_{W\times 0} X\times\mathbb A^1\to X\times\mathbb A^1\to X.
\]
These maps are all pulled back from corresponding maps of relative Artin fans, so the pullback of $\mathcal F$ to the degeneration is a transverse sheaf in the total space. The restriction of $\mathcal F$ to any fiber has the same Euler characteristic. On the general fiber we recover the left hand side of the equation. For the right hand side, apply the inclusion/exclusion formula and then the previous lemma to conclude. 
\end{proof}

\section{Minimal models for transverse sheaves}\label{sec: transverse-collapse}

In this section we prove that canonical transversalization exists, as stated in Theorem~\ref{thm: canonical-transversalization}. 

Let $(X,D)$ be a pair consisting of a smooth projective variety and a simple normal crossings pair. As discussed before, we have a strict map
\[
X\to\mathsf A({X,D}).
\]
Fix a sheaf $\mathcal E$ on $X$ that is algebraically transverse to $D$, so $\mathcal E$ is flat over $\mathsf A(X,D)$. 

Let $\mathbb P_Z\to Z$ be a projective bundle. A sheaf $\mathcal G$ on $\mathbb P_Z$ is {\it vertically trivial} if it is the pullback of a sheaf on $Z$. The key result of this section is the following. 

In the following theorem, we understand the word {\it stratum} to refer to be a closed stratum with its reduced structure.

\begin{theorem}\label{thm: general-collapse}
Let 
\[
p\colon X' = \mathsf{Bl}_Z X \to X
\]
be a blowup of $X$ at a stratum $Z$ of $(X,D)$ with exceptional divisor $E$. Consider a quotient sheaf
\[
p^\star \mathcal E\to \mathcal F'
\]
and assume that $\mathcal F'$ is algebraically transverse on $(X',D')$ and that $\mathcal F'$ is vertically trivial along $E\to Z$. Then there is a sheaf $\mathcal F$ on $X$, algebraically transverse to $D$, which pulls back to $\mathcal F'|_{E}$, i.e. $\mathcal F'$ and $p^\star\mathcal F$ coincide as quotients of $p^\star \mathcal E$. 
\end{theorem}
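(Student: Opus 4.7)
The plan is to take $\mathcal F := p_\star \mathcal F'$ as the candidate descended sheaf and establish three claims: $(i)$ $Rp_\star \mathcal F' = \mathcal F$ is concentrated in degree zero; $(ii)$ the counit $p^\star \mathcal F \to \mathcal F'$ is an isomorphism; and $(iii)$ the map $\mathcal E \to \mathcal F$ induced by adjunction is surjective, with $\mathcal F$ algebraically transverse to $D$.

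For $(i)$, the exceptional divisor $\pi \colon E \to Z$ is the projectivization of $N_{Z/X}$, hence a $\mathbb P^{c-1}$-bundle over $Z$ with $c$ the codimension of $Z$. Filtering $\mathcal F'$ by powers of $\mathcal I_E$, algebraic transversality of $\mathcal F'$ to $E$ yields short exact sequences
\[
0 \to \mathcal I_E^{n+1}\mathcal F' \to \mathcal I_E^n \mathcal F' \to \pi^\star\mathcal G \otimes \mathcal O_E(n) \to 0,
\]
using the conormal identification $\mathcal I_E \otimes \mathcal O_E \cong \mathcal O_E(1)$ and the vertical triviality $\mathcal F'|_E = \pi^\star \mathcal G$. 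Standard projective bundle cohomology and the projection formula give $R^{>0}\pi_\star(\pi^\star\mathcal G\otimes\mathcal O_E(n)) = 0$ for $n \geq 0$. Feeding this into the inverse system (via the theorem on formal functions) yields the vanishing of $R^{>0}p_\star \mathcal F'$ and coherence of $p_\star \mathcal F'$.

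For $(ii)$, flat base change---enabled by $(i)$---combined with $R\pi_\star\mathcal O_E = \mathcal O_Z$ gives $\mathcal F|_Z = \pi_\star(\mathcal F'|_E) = \pi_\star \pi^\star \mathcal G = \mathcal G$. Hence $(p^\star\mathcal F)|_E = \pi^\star(\mathcal F|_Z) = \pi^\star \mathcal G = \mathcal F'|_E$, so the counit $\alpha \colon p^\star\mathcal F \to \mathcal F'$ is an isomorphism on $E$ and on $X'\setminus E$ (where $p$ is an isomorphism). A Nakayama argument on $\ker(\alpha)$ and $\mathrm{coker}(\alpha)$, both supported on $E$, combined with the transversality of $\mathcal F'$ to $E$ (which kills the relevant $\mathrm{Tor}^1$), concludes that $\alpha$ is globally an isomorphism.

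For $(iii)$, surjectivity of $\mathcal E \to \mathcal F$ follows by applying $Rp_\star$ to $0 \to \mathcal K' \to p^\star\mathcal E \to \mathcal F' \to 0$: by the projection formula and $Rp_\star\mathcal O_{X'} = \mathcal O_X$ for the smooth-centered blowup we get $Rp_\star p^\star\mathcal E = \mathcal E$, and combined with $R^{>0}p_\star\mathcal F' = 0$ from $(i)$, the long exact sequence produces $\mathcal E \twoheadrightarrow \mathcal F$. Algebraic transversality of $\mathcal F$ to $D$ is the delicate part. I would compute $\mathcal F \otimes^{\mathbf L} \mathcal O_{D_I} = Rp_\star(\mathcal F' \otimes^{\mathbf L} Lp^\star\mathcal O_{D_I})$ for each $D$-stratum $D_I$; when $D_I$ is disjoint from $Z$ this is immediate from transversality on $X'$, while for $D_I$ meeting $Z$ one resolves $Lp^\star\mathcal O_{D_I}$ in terms of structure sheaves of transverse strata of $(X',D')$ via inclusion--exclusion of the kind developed in Section~\ref{sec: toolbox}. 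The transversality of $\mathcal F'$ to all components of $D'$, including $E$, then kills the higher Tor contributions. The main obstacle is this last step: the non-flatness of $p$ introduces derived contributions from strata containing $Z$, and showing they vanish relies essentially on both hypotheses on $\mathcal F'$---transversality to the full boundary $D'$ and vertical triviality along the exceptional.
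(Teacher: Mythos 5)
Your candidate $\mathcal F = p_\star\mathcal F'$ agrees with the paper's (which defines $\mathcal F$ as the closure of $\mathcal F'|_{X'\setminus E}$ in $X$, viewed as a quotient of $\mathcal E$), but your route --- direct cohomological descent along $Rp_\star$ --- is entirely different from the paper's, which works by induction on dimension via an auxiliary toric degeneration to $\mathbb P^1$ and compares the closure with an explicitly constructed transverse candidate on the special fibre by a Hilbert polynomial count. Unfortunately your argument has gaps at exactly the points where the hypotheses must do work. First, in step $(ii)$ the appeal to ``flat base change'' is not available: none of $p$, $i_Z$, or $\mathcal F'$ over $X$ is flat, and the Cartesian square $E = X'\times_X Z$ is \emph{not} Tor-independent --- already for the blowup of $\mathbb A^2$ at the origin one computes $\mathrm{Tor}_1^{\mathcal O_X}(\mathcal O_{X'},\mathcal O_Z)\cong\mathcal O_E(1)\neq 0$ from the Koszul resolution of $\mathcal O_Z$. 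So the identification $(p_\star\mathcal F')|_Z = \pi_\star(\mathcal F'|_E)$, which is where vertical triviality must enter, needs a genuine argument (e.g.\ comparing $p_\star(\mathcal I_E\mathcal F')$ with $\mathcal I_Z\cdot p_\star\mathcal F'$), not a citation of base change. Second, in step $(iii)$ the surjectivity of $\mathcal E\to p_\star\mathcal F'$ does not follow from $R^{>0}p_\star\mathcal F'=0$ and $Rp_\star p^\star\mathcal E=\mathcal E$: the long exact sequence gives $\operatorname{coker}(\mathcal E\to p_\star\mathcal F')\cong R^1p_\star\mathcal K'$ where $\mathcal K'=\ker(p^\star\mathcal E\to\mathcal F')$, and this vanishing is an additional nontrivial claim you have not addressed.

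The most serious issue is that the actual content of the theorem --- algebraic transversality of $\mathcal F$ over $\mathsf A(X,D)$ --- is only sketched. Your proposed computation of $Rp_\star(\mathcal F'\otimes^{\mathbf L}Lp^\star\mathcal O_{D_I})$ runs into the same failure of Tor-independence: for strata $D_I$ meeting $Z$, the complex $Lp^\star\mathcal O_{D_I}$ has nonvanishing higher terms supported on $E$, and the assertion that transversality of $\mathcal F'$ to $D'$ ``kills the higher Tor contributions'' is precisely the statement that needs proof; moreover, knowing $\mathcal F\otimes^{\mathbf L}\mathcal O_{D_I}$ is concentrated in degree zero for each stratum is not obviously deducible from a formula for $Rp_\star$ of the corresponding objects upstairs without first knowing $p^\star\mathcal F\cong^{\mathbf L}\mathcal F'$ in a derived sense. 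You correctly identify this as the main obstacle, but that is exactly the part the paper spends Section~\ref{sec: transverse-collapse} proving (via the candidate sheaf $\mathcal H_0$ on the special fibre of the toric degeneration, the inclusion/exclusion Euler characteristic identity, and induction on dimension), so as it stands the proposal defers rather than resolves the heart of the argument.
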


By work Tevelev, given {\it any} coherent sheaf $\mathcal F$ on $X$, there exists a sequence of strata blowups $X''\to X$ such that the strict transform of $\mathcal F$ is algebraically transverse~\cite{Logquot,tevelev2005compactifications,TevNotes,TV21}. The process is constructive but non-canonical -- even when the starting sheaf is algebraically transverse the construction might perform further blowups. 

The result allows us to start with such a transversalization and blow down the loci that look like they can be blown down, to arrive at something minimal, since any cells that strictly refine the Gr\"obner stratification correspond to strata on which the quotient is vertically trivial. 

\noindent
{\it Deducing Theorem~\ref{thm: canonical-transversalization}}. We note Theorem~\ref{thm: general-collapse} implies Theorem~\ref{thm: canonical-transversalization} by the following elementary argument. Fix a quotient $[p^\star \mathcal E\twoheadrightarrow \mathcal F']$ on modification $X'\to X$. Let $\Delta\to \Sigma$ be any smooth fan that refines the Gr\"obner stratification of this quotient. We can restrict the quotient to the interior of $X_\Delta$ and take closure. Our goal is to show that the closure is algebraically transverse. Since sequences of stellar subdivisions are cofinal among subdivisions, we can perform a sequence of blowups of $X_\Delta$ along smooth centers until the result, say $X_\Delta'$ dominates $X'$. By algebraic transversality, the closure of the quotient on $X_\Delta'$ is the pullback of the quotient on $X'$. We use the collapsing result above to show that the quotient descends to an algebraically transverse one along $X_\Delta'\to X_\Delta$. By induction on the number of blowups, it suffices to treat the case of a single blowup. Since any subdivision of $X_\Delta$, by hypothesis, refines the Gr\"obner stratification, the restriction of the quotient to each exceptional is vertically trivial. The result follows.  \qed

\begin{remark}
    The smoothness hypothesis seems to be technical, but we have not managed to remove it. Since algebraically transverse sheaves pullback to algebraically transverse sheaves (meaning ones that are flat over the Artin fan) under arbitrary logarithmic blowups, the result that actually holds is the following. If $\Sigma'\to \Sigma$ is any, possibly non-smooth, subdivision that refines some smooth fan structure on the Gr\"obner stratification, the closure of the quotient in the corresponding $X'$ is algebraically transverse. 
\end{remark}

\subsection{Local and toric collapse} We state local, and then projective toric analogues of Theorem~\ref{thm: general-collapse}. The toric statement is proved first, using toric degenerations and Hilbert polynomial calculations; the latter needs projectivity. We deduce the affine local case, and finally the general case. 

We temporarily reset notation to:
\[
X = \mathbb A^r, \ \ X' = \mathsf{Bl}_Z X
\]
where $Z$ is a coordinate linear subspace and $p\colon X'\to X$ is the blowup. We take $D$ and $D'$ to be the complement of the tori in $X$ and $X'$.  Let $\mathcal E$ be a trivial module of finite rank. Consider a quotient
\[
p^\star\mathcal E\twoheadrightarrow\mathcal F'
\]
on $X'$ that is algebraically transverse. By restriction we obtain a quotient sheaf on $X'\setminus E$, which we identify with a quotient sheaf on $X\setminus Z$. By taking closure in $X$, see Remark~\ref{rem: closures}, we obtain:
\[
\mathcal E\twoheadrightarrow \mathcal F.
\]

Our goal is to prove the following: 

\begin{proposition}[Affine collapse]\label{prop: local-collapse}
The sheaf $\mathcal F$ is algebraically transverse to $D$ and its pullback coincides with $\mathcal F'$. 
\end{proposition}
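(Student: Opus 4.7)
The plan is to first prove the toric projective analogue of the statement and then descend to the affine case by restriction. Following the hypotheses inherited from Theorem~\ref{thm: general-collapse}, I take vertical triviality of $\mathcal F'$ along $E\to Z$ as implicit (without it a kernel of the structure sheaf of the exceptional would give a counterexample). Compactify $X=\mathbb A^r$ to a smooth projective toric variety $\bar X$ (for instance $(\mathbb P^1)^r$) in which $Z$ extends to a torus-invariant stratum $\bar Z$ and the blowup extends to $\bar X'=\mathsf{Bl}_{\bar Z}\bar X$ with exceptional $\bar E$. Extending $\mathcal F'$ by closure outside the affine chart yields an algebraically transverse, vertically trivial quotient $p^\star\bar{\mathcal E}\twoheadrightarrow\bar{\mathcal F}'$; the affine statement will be recovered by restriction since both algebraic transversality and closure are local.

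Let $\bar{\mathcal F}$ denote the closure of $\bar{\mathcal F}'|_{\bar X'\setminus \bar E}$, viewed as a quotient of $\bar{\mathcal E}$ via the open inclusion $j\colon \bar X\setminus \bar Z\hookrightarrow \bar X$. By construction $\bar{\mathcal E}\twoheadrightarrow\bar{\mathcal F}$ is a coherent quotient, and pullback produces a canonical surjection $p^\star\bar{\mathcal F}\twoheadrightarrow\bar{\mathcal F}'$ which is an isomorphism away from $\bar E$. The two claims to verify are: (i) $\bar{\mathcal F}$ is algebraically transverse along $\bar Z$, i.e.\ flat over the Artin fan of $(\bar X, D)$; and (ii) the canonical surjection is an isomorphism over $\bar E$. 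Vertical triviality supplies a sheaf $\mathcal G_0$ on $\bar Z$ with $\pi^\star\mathcal G_0=\bar{\mathcal F}'|_{\bar E}$, where $\pi\colon \bar E\to \bar Z$ is the projective bundle projection, and the strategy is to show $\bar{\mathcal F}|_{\bar Z}=\mathcal G_0$; together with flat base change along $p$ this gives~(ii).

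The technical heart of the argument is a Hilbert polynomial computation with respect to a polarization $L$ on $\bar X$. Using the blowup formula of Lemma~\ref{lem: blowup-formula} together with the inclusion/exclusion identities from Section~\ref{sec: basic-formulae}, I expand $\chi(\bar X',\bar{\mathcal F}'\otimes p^\star L\otimes \mathcal O(m\bar E))$ for varying $m$ as a signed sum of Euler characteristics on intersections of $\bar E$ with other strata. Vertical triviality identifies each $\bar E$-contribution as the Euler characteristic of a sheaf pulled back from a stratum of $\bar Z$, which collapses via the projective-bundle pushforward identity to an Euler characteristic on $\bar X$; combining these collapsed terms reassembles $\chi(\bar X,\bar{\mathcal F}\otimes L)$. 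Equality of Hilbert polynomials, combined with the surjection $p^\star\bar{\mathcal F}\twoheadrightarrow\bar{\mathcal F}'$, forces the two sheaves to coincide and establishes~(ii); algebraic transversality~(i) then follows from the Artin-fan flatness characterization, since the central fiber of the normal-cone degeneration of $(\bar X,\bar Z)$ acquires a flat family whose members are forced to be transverse.

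The step I expect to be the main obstacle is~(ii): ruling out a potentially nontrivial kernel of $p^\star\bar{\mathcal F}\twoheadrightarrow\bar{\mathcal F}'$ supported on $\bar E$. This is precisely where vertical triviality must be used quantitatively rather than formally, and arranging the inclusion/exclusion expansion so that the $\bar Z$-pulled-back contributions cancel cleanly against the collapsed projective-bundle terms is the delicate part. Once the Hilbert polynomial identity is established, the rest of the argument is formal and the descent to the affine case is immediate by restricting to the original affine chart.
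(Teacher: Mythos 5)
Your overall flavor---compare Hilbert polynomials, use vertical triviality to collapse the $E$-contributions via the projective-bundle pushforward identity, and conclude from a surjection between sheaves with equal Hilbert polynomial---does echo the paper's key step (Proposition~\ref{prop: candidate-is-special-fiber}). But there are two genuine gaps. First, the compactification step does not work as stated: extending $\mathcal F'$ by closure to an arbitrary toric compactification such as $(\mathbb P^1)^r$ does \emph{not} produce a quotient that is algebraically transverse to the boundary at infinity, so the hypotheses of the projective statement are not met. This is precisely why the paper's logic runs in the opposite direction: it first proves the projective toric collapse (Proposition~\ref{prop: toric-collapse}) by induction on dimension, and only then deduces the affine case by choosing a compactification refining the \emph{Gr\"obner stratification} and running an iterated collapsing argument down a transversalizing tower. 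Transversality of the closure on the compactification is an \emph{output} of the toric proposition, not an input you can assume.

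Second, the Hilbert polynomial comparison is circular as proposed. To show $p^\star\bar{\mathcal F}\twoheadrightarrow\bar{\mathcal F}'$ is an isomorphism you need $\chi(\bar X',p^\star\bar{\mathcal F}\otimes p^\star L^n)=\chi(\bar X,\bar{\mathcal F}\otimes L^n)$, but this blowup formula (Lemma~\ref{lem: blowup-formula}) has transversality of $\bar{\mathcal F}$ as a hypothesis---exactly what you are trying to prove; without it the underived pullback $p^\star\bar{\mathcal F}$ differs from the derived one by Tor terms supported on $\bar E$ that contribute unknown corrections to $\chi$. The paper breaks the circle by degenerating $\mathcal Y$ over $\mathbb P^1$ so that the restriction of the closure to the special fiber is a \emph{flat limit} with a priori known Euler characteristic, constructing an explicit candidate $\mathcal H_0$ on the special fiber whose transversality is proved independently (Proposition~\ref{prop: candidate-is-flat}, using the dimension induction and a gluing argument across the components of $\mathcal Y_0$), and only then matching Euler characteristics. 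Your final step, deducing flatness over the Artin fan from ``the normal-cone degeneration acquires a flat family,'' is likewise asserted rather than proved; in the paper this is the local-criterion-for-flatness induction at the end of the proof of Proposition~\ref{prop: toric-collapse}, and it depends essentially on having identified the special fiber with the transverse candidate $\mathcal H_0$.
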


\begin{remark}[Closures of quotients]\label{rem: closures}
Closures for quotients of coherent sheaves can be defined analogously to closures of a subscheme. We say a word about it, as it doesn't seem to make frequent appearances in the literature. Let $U$ be an open subscheme in $X$. Consider a surjection of sheaves on $U$ written $$q\colon \mathcal{O}_U^n \twoheadrightarrow \mathcal{F}_U.$$ Its closure in $X$ written $$\overline{q}\colon \mathcal{O}_X^n \twoheadrightarrow \mathcal{F}$$ is the unique surjection of sheaves with the following properties.

\begin{enumerate}[(i)]
    \item The surjection $\overline{q}$ restricts to $q_U$.
    \item The image $\mathcal{F}$ has no sections supported on the boundary of $X$.
\end{enumerate}

The closure exists and is terminal among surjections $q'\colon\mathcal{O}_X^n \rightarrow \mathcal{F}$ with the given restriction to $U$. It suffices to construct the closure for each affine open. On an affine open $V$, we specify an exact sequence $$ 0 \rightarrow K_V \rightarrow \mathbb{C}[V]^n\xrightarrow{\overline{q}} F_V\rightarrow 0.$$ The surjection $q$ is the data of a sequence of sheaves on $U \cap V$:
    $$ 0 \rightarrow K_{U \cap V} \rightarrow \mathbb{C}[U\cap V]^n\xrightarrow{q} F_{U,V}\rightarrow 0.$$
    The kernel $K_V$ is then specified by $$K_V = \{f \in \mathbb{C}[V]^n| f|_{U \cap V} \in K_{U \cap V}\}.$$ The surjection $\overline{q}$ is terminal because $K_V$ is maximal among submodules that pull back to $K_{U \cap V}$.
\end{remark}

We prove Proposition \ref{prop: local-collapse} via the following statement in the toric setting. 

\begin{proposition}[Toric collapse]\label{prop: toric-collapse}
Let $\mathcal Y$ be a smooth projective toric variety and $\mathcal Y'\to \mathcal Y$ be a blowup at a stratum $Z$ with exceptional divisor $E$. Let $q' \colon \mathcal O_{\mathcal Y'}^n \twoheadrightarrow\mathcal G'$ be a surjection of sheaves. Assume that $\mathcal G'$ is transverse to the toric boundary and $\mathcal G'|_E$ is vertically trivial along $E\to Z$. Then $q'$ is pulled back from a surjection $q\colon \mathcal{O}_X^n\twoheadrightarrow \mathcal G$ on $\mathcal Y$. 
\end{proposition}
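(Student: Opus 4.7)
The natural candidate is $\mathcal{G} := p_\star \mathcal{G}'$, with the surjection $q\colon \mathcal{O}_{\mathcal{Y}}^n \twoheadrightarrow \mathcal{G}$ obtained by pushing forward $q'$ along $p$ and using that $p_\star \mathcal{O}_{\mathcal{Y}'} = \mathcal{O}_{\mathcal{Y}}$ (which holds because $p$ is a blowup of a smooth subvariety). The first task is to check that $q$ really is surjective. Away from $Z$ this is immediate since $p$ is an isomorphism. Above $Z$, cohomology and base change (see Step 2) identify $\mathcal{G}|_Z$ with $(p|_E)_\star(\mathcal{G}'|_E)$, which, writing $\mathcal{G}'|_E = (p|_E)^\star \mathcal{H}$ by vertical triviality, equals $\mathcal{H}$ via the projection formula and $(p|_E)_\star \mathcal{O}_E = \mathcal{O}_Z$ (since $p|_E$ is a projective bundle).

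The second step is to show $R^i p_\star \mathcal{G}' = 0$ for $i > 0$. Away from $E$ the map $p$ has point fibers, and along $E \to Z$ the fibers are projective spaces over which $\mathcal{G}'$ restricts to a trivial sheaf by vertical triviality; higher cohomology of a trivial sheaf on $\mathbb{P}^k$ vanishes, so cohomology and base change give the vanishing. Consequently, the counit of adjunction $\phi\colon p^\star \mathcal{G} \to \mathcal{G}'$ is an isomorphism: this is obvious on $\mathcal{Y}' \setminus E$, and on $E$ the same projection-formula computation identifies both sides with $p_E^\star \mathcal{H}$. By coherence, agreement on the complement of $E$ together with the fibrewise iso on $E$ forces the kernel and cokernel of $\phi$, both supported on $E$, to vanish. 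Thus $q'$ is the pullback of $q$ as quotients of $\mathcal{O}_{\mathcal{Y}'}^n$.

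The remaining step is to verify that $\mathcal{G}$ is algebraically transverse to the toric boundary $D$ of $\mathcal{Y}$. Transversality is local and checks by restriction to the closed strata: $\mathcal{G}$ is transverse iff for each stratum $W \subset \mathcal{Y}$, the higher derived restrictions $\mathrm{Tor}_i^{\mathcal{O}_{\mathcal{Y}}}(\mathcal{G}, \mathcal{O}_W)$ vanish. Away from $Z$ this is inherited from the transversality of $\mathcal{G}'$, since $p$ is an isomorphism there. For a stratum $W$ meeting $Z$, one can degenerate to the normal cone along $W$ and use the blowup/projection-formula computations from Section~\ref{sec: toolbox} (in particular Lemma~\ref{lem: blowup-formula} and the inclusion/exclusion formula) to compare Euler characteristics of $\mathcal{G}|_W$ with those of $\mathcal{G}'$ restricted to the corresponding stratum of $\mathcal{Y}'$, which is algebraically transverse by hypothesis. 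Alternatively, one translates flatness over $\mathsf{A}(\mathcal{Y}, D)$ directly, using that the Artin-fan map factors the stratum blowup and that we have already identified $p^\star \mathcal{G} = \mathcal{G}'$.

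The main obstacle I anticipate is the last step, verifying algebraic transversality of $\mathcal{G}$ along strata meeting $Z$. Vertical triviality gives us a great deal of control on $E$, but transversality at a point $z \in Z$ involves regular sequences in $\mathcal{O}_{\mathcal{Y},z}$ coming from \emph{all} boundary components of $D$ through $z$, not just those whose strict transforms are easy to track through $p$. The projective-bundle geometry of $E \to Z$ together with the vanishing of higher $p_\star$ should promote transversality from $\mathcal{G}'$ to $\mathcal{G}$, but I expect the proof to require a careful tor-vanishing computation or a clean Hilbert-polynomial argument, rather than a direct stalk calculation.
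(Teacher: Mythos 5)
Your reduction of the problem is sound as far as it goes: taking $\mathcal G = p_\star\mathcal G'$ (equivalently, the closure of $\mathcal G'|_{\mathcal Y'\setminus E}$, since transversality of $\mathcal G'$ rules out sections supported on $E$), and using vertical triviality along $E\to Z$ together with cohomology and base change to identify $p^\star\mathcal G$ with $\mathcal G'$ as quotients, is consistent with what the paper does. But you have correctly located the heart of the matter --- transversality of $\mathcal G$ along strata meeting $Z$ --- and then not proved it; both fallback strategies you sketch would fail. Comparing Euler characteristics of $\mathcal G|_W$ with those of $\mathcal G'$ on corresponding strata is a purely numerical check and cannot by itself establish flatness over $\mathsf A(\mathcal Y,D)$: equality of Euler characteristics is necessary but nowhere near sufficient. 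And there is no direct ``translation'' of flatness from $\mathsf A(\mathcal Y',D')$ to $\mathsf A(\mathcal Y,D)$ via $p^\star\mathcal G=\mathcal G'$; the failure of such descent is precisely why the proposition is nontrivial and why the vertical-triviality hypothesis is needed at all.

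The paper's proof supplies the missing machinery, and it is substantial. It proceeds by induction on $\dim\mathcal Y$, and for each maximal cone containing the cone dual to $Z$ it builds a toric degeneration $\mathcal Y\to\mathbb P^1$ (after auxiliary blowups away from the relevant affine chart) so that $Z$ lies in the special fiber $\mathcal Y_0$ and the general fiber avoids the blowup center entirely. It then constructs a candidate sheaf $\mathcal H_0$ on $\mathcal Y_0$ by closure from $\mathcal Y_0'\setminus E$, proves $\mathcal H_0$ is flat over $\mathsf B_{\mathcal Y_0}$ component by component using the inductive hypothesis, and identifies $\mathcal H_0$ with the flat limit $\mathcal G_0$. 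The Hilbert-polynomial comparison enters only at this last identification, and it works there for a structural reason your sketch lacks: $\mathcal H_0$ is already known to be a quotient of $\mathcal G_0$, so equality of Hilbert polynomials forces the surjection to be an isomorphism. Even that step requires a separate geometric argument (Remark~\ref{rem: absurdity argument}) to handle the non-reduced multiplicity of $E$ in $\mathcal Y_0'$. Finally, flatness of $\mathcal G$ over the Artin fan near $Z$ is extracted from the known special fiber via the local criterion for flatness, one coordinate at a time. To complete your proposal you would need to supply an argument of comparable strength for the transversality step; as written, the proposal establishes only the (comparatively easy) identification of the candidate sheaf.
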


The proof proceeds in several steps. Let $q \colon \mathcal{O}_X^n\rightarrow \mathcal G$ be the closure in $\cY$ of the restriction of $\cG'$ to $\mathcal Y'\setminus E$, namely $\cY\setminus Z$. The heart of the matter is showing that $\mathcal G$ is transverse. Once this is true, since both $q'$ and the pullback of $q$ to $\mathcal Y'$ are closures of their restrictions to the torus, so they coincide. 

We proceed by induction on the dimension of $\cY$, and so assume the proposition for all toric varieties of strictly smaller dimension. We need the following elementary claim.

\noindent
{\bf Claim.} Let $\sigma$ be the cone in the fan $\Sigma_{\mathcal Y}$ that is dual to the closed stratum $Z$. Let $\sigma_1,\ldots, \sigma_s$ be the maximal cones that contain $\sigma$ as a face. In other words, they are the maximal cones in the star of $\sigma$. Let $U_i\subset\mathcal Y$ be the torus invariant open dual to $\sigma_i$. In order to show that $\mathcal G$ is transverse, it suffices to show that $\mathcal G|_{U_i}$ is transverse to the boundary of $U_i$ for each $i$. 

\noindent
{\it Proof.} Flatness of a sheaf can be checked affine locally. \qed

Fix a cone $\sigma_i$ and the corresponding open $U_i$. Consider the toric rational map
\[
\mathcal Y\dashrightarrow \mathbb P^1
\]
defined on fans by mapping each ray of $\sigma_i$ to the positive ray in $\mathbb P^1$ with slope $1$. Since $\sigma_i$ is a smooth cone, this prescription defines a linear function on the cocharacter lattice, from which we obtain the rational map above. The open set $U_i$ is, by construction, contained in the domain. Let $U_i'$ be its preimage in $\cY'$. Note that $U_i'$ is open but not affine. 

We require a second claim.

\noindent
{\bf Claim.} The rational map $\mathcal Y\dashrightarrow \mathbb P^1$ can be resolved to $\mathcal Y_\sim\to\mathbb P^1$, with $\mathcal Y_\sim$ smooth, by toric blowups with center in the complement of $U_i$. 

\noindent
{\it Proof.} The indeterminacy locus is torus invariant because the rational map is induced by a homomorphism of tori. Blowup the indeterminacy locus and apply toric resolution of singularities. The former clearly doesn't change $U_i$, while the latter can be chosen not to change $U_i$ by functoriality of resolution of singularities. \qed

Let $\cY'_\sim$ be the blowup of $\cY_\sim$ at the strict transform of the center $Z$. Naturally, on the preimage of $U_i$ this this is isomorphic to $\cY'$. In order to avoid getting burdened with notation, we {\it replace} $\cY$ and $\cY'$ with $\cY_\sim$ and $\cY'_\sim$ respectively. We correspondingly replace $q\colon \mathcal{O}_X^n\rightarrow \cG$ and $q\colon \mathcal{O}_X^n \rightarrow \cG'$ with their strict transforms. In the latter case, it is also the total transform since that sheaf is transverse. We have changed the problem slightly by doing this, as we have replaced the original $\cG$ with its strict transform. But since both sheaves restrict to the given ones on $U_i$ and $U_i'$, for the purposes of the claim, the situation is unchanged. In particular, if we show that for each $U_i$ this new replacement $\cG$ is transverse, then we deduce the statement for the original $\cG$. With this understanding, we will prove algebraic transversality for a particular $U_i$, and therefore make the replacements. 

We have degenerations:
\[
\begin{tikzcd}
U_i'\subset \mathcal Y'\arrow{dr}\arrow{rr} && U_i\subset \mathcal Y\arrow{dl}\\
&\mathbb P^1.&
\end{tikzcd}
\]
The image of $Z$ is a torus invariant point of $\mathbb P^1$, so we choose coordinates to make it $0$. The affine open set $U_i$ lies over the invariant affine patch of $\mathbb P^1$ containing $0$. 

Since $U_i$ is a smooth affine toric variety corresponding to a full dimensional cone, it has no torus factors and so is isomorphic to $\mathbb A^r$, where $r$ is the dimension of $\cY$. The local structure of the degeneration is as follows. Let $x_1,\ldots,x_k,y_{k+1},\ldots,y_r$ be the coordinates on $U_i$ and let $x_1,\ldots,x_k$ be the coordinates whose vanishing locus is $Z$. Consider the map
\[
X = \mathbb A^r\to\mathbb A^1.
\]
given by the product of all coordinates. The map exhibits $\A^r$ as the total space of a simple normal crossings degeneration over $\A^1$. The special fiber of the degeneration is the toric boundary
\[
\left\{(x_1,\ldots,x_k,y_{k+1},\ldots,y_r)\colon \textnormal{at least one of the $x_i$'s or $y_i$'s is $0$} \right\}.
\]
The general fiber is isomorphic to $\mathbb G_m^{r-1}$. The special fiber consists of $r$ irreducible components, each isomorphic to $\mathbb A^{r-1}$. By composition, the blowup is also presented as a degeneration, fitting into:
\[
\begin{tikzcd}
U_i'\arrow{dr}\arrow{rr} && U_i\arrow{dl}\\
&\A^1.&
\end{tikzcd}
\]
The general fibers, and more precisely the preimage of $\mathbb G_m\subset\mathbb A^1$, are the same. The special fiber of $U_i'\to \A^1$ is non-reduced along the exceptional divisor $E$ and this component has multiplicity equal to $k$. The special fiber is given by
\[
\mathbb V(\Pi  (\widetilde x_i\cdot s)\cdot \Pi \ y_j)\subset X'_0,
\]
where $\widetilde x_i$ is the section of the line bundle on $U_i'$ that cuts out the strict transform of the $i^{th}$-coordinate hyperplane.

We turn our attention to the sheaves $\mathcal G$ and $\mathcal G'$. We would like to view $\mathcal G'$ and $\mathcal G$ as flat families of sheaves on these two degenerations. The sheaves are flat over $[\mathbb P^1/\mathbb G_m]$, though they need not be flat over $\mathbb P^1$ itself\footnote{For example, if the sheaves had an isolated closed point in the interior of the torus, this would not affect flatness over $[\mathbb P^1/\mathbb G_m]$ but over $\mathbb P^1$, flatness would fail over the image of this point.}. We make do with the following. 

\begin{lemma}
There is an open subscheme $U\subset \mathbb P^1$ containing $0$, such that the pullbacks 
\[
\mathcal G'|_U \ \ \textnormal{and} \ \ \mathcal G|_U
\]
are flat families of sheaves over $U$. 
\end{lemma}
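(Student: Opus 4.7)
The plan is to reduce flatness over a neighbourhood of $0\in\mathbb P^1$ to a regular-sequence statement, and then to verify that statement for $\mathcal G'$ directly from algebraic transversality on $\mathcal Y'$, and for $\mathcal G$ by comparison with $p_\star\mathcal G'$. Since $\mathbb P^1$ is a smooth curve, a coherent sheaf on a scheme $W\to\mathbb P^1$ is flat over an open $U\ni 0$ if and only if the pullback of a local parameter at $0$ acts as a non-zero divisor on the sheaf locally near the fibre over $0$; generic flatness then takes care of the remaining points of $U$ after shrinking if necessary. So after fixing a local coordinate $s$ at $0$, the task is to show that $p^\star s$ and its $\mathcal Y$-counterpart act as non-zero divisors on $\mathcal G'$ and $\mathcal G$ in a neighbourhood of the preimage of $0$.

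For $\mathcal G'$, the toric description of $\mathcal Y'\to\mathbb P^1$ presents $p^\star s$, locally on the chart $U_i'$, as a product $\prod_j g_j^{a_j}$ of local defining equations of those boundary components that map to $0$, raised to the multiplicities appearing in the scheme-theoretic fibre. Algebraic transversality of $\mathcal G'$ to $D'$ provides the regular-sequence condition on the $g_j$'s, and the lemma of Section~\ref{sec: toolbox} asserting that positive powers of the terms of a regular sequence still form a regular sequence promotes this to the product $\prod_j g_j^{a_j}$ being a non-zero divisor on $\mathcal G'$. This already yields flatness of $\mathcal G'$ over some $U\ni 0$.

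For $\mathcal G$, the direct argument is unavailable because algebraic transversality of $\mathcal G$ is precisely what the section is trying to establish. Instead I would compare $\mathcal G$ with the pushforward $p_\star\mathcal G'$: using $p_\star\mathcal O_{\mathcal Y'}=\mathcal O_{\mathcal Y}$, there is a canonical map $\mathcal O_{\mathcal Y}^n\to p_\star\mathcal G'$, and since $\mathcal G'$ has no sections supported on $E$ (transversality makes the local equation of $E$ a non-zero divisor), the universal property of the closure identifies $\mathcal G$ with the image of this map and yields an inclusion $\mathcal G\hookrightarrow p_\star\mathcal G'$. Left exactness of $p_\star$ transports the injectivity of multiplication by $p^\star s$ on $\mathcal G'$ to injectivity of multiplication by $s$ on $p_\star\mathcal G'$, and hence on its subsheaf $\mathcal G$, giving flatness of $\mathcal G$ over $U$ after possibly shrinking. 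The main obstacle is this transfer step: without the comparison with $p_\star\mathcal G'$, the closure construction offers no direct handle on associated primes of $\mathcal G$ away from $Z$, and algebraic transversality on $\mathcal Y$ cannot be invoked. A secondary bookkeeping point is the local formula $p^\star s=\prod_j g_j^{a_j}$ on $U_i'$, which relies on the chosen resolution $\mathcal Y_\sim$ making $\mathcal Y'\to\mathbb P^1$ toric near $Z$ with the preimage of $0$ a regular crossings divisor.
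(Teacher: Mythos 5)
Your proposal is correct and follows essentially the same route as the paper: both arguments invoke generic flatness away from $0$ and then reduce flatness at $0$ to the absence of sections supported over $0$, equivalently to the local parameter $s$ acting as a non-zero divisor, which holds because $\mathcal G'$ is transverse and $\mathcal G$ is a closure (the paper phrases this as ``$\mathcal G$ and $\mathcal G'$ coincide with the closures of their interiors''). Your more explicit verifications --- the regular-sequence computation of $p^\star s$ for $\mathcal G'$ and the embedding $\mathcal G\hookrightarrow p_\star\mathcal G'$ for $\mathcal G$ --- are valid unpackings of that one-line justification.
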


\begin{proof}
Both $\mathcal G$ and $\mathcal G'$ are certainly flat over some open subset of $\mathbb P^1$. The only nontrivial aspect to is to check that this open can be chosen to contain $0$. For this, it is enough to show that there are no sections with support that lie in the preimage of $0$. The preimage of $0$ lies in the boundary of $\cY$ resp. $\cY'$, and since $\mathcal G$ resp. $\mathcal G'$ coincide with the closures of their interiors, there are no sections supported over the boundary. 
\end{proof}

The quotient sheaf $\mathcal G$ has been defined by closure of a quotient on the complement of $Z$. Therefore, a priori, we have no control over its behaviour near the center $Z$ and this is the essential aspect of the problem. However, by the above lemma, the restriction of $\mathcal G$ to the special fiber $\mathcal Y_0$ is the flat limit of the restriction of $\mathcal G$ to a generic fiber of $\mathcal Y\to\mathbb P^1$. This gives us some control over the closure -- its Hilbert polynomial is known in terms of data associated to the sheaf $\mathcal G'$, and this is how we will control it. 

We organize the calculation by the following:

\noindent
{\bf Construction.} We build a sheaf $\mathcal H_0$ on $\mathcal Y_0$ as follows. It will serve as a candidate to describe the special fiber $\mathcal G|_{\mathcal Y_0}$. Take $q\colon \mathcal{O}_X^n\twoheadrightarrow \cG'$ and restrict it to the locally closed subscheme $\mathcal Y_0'\setminus E$. The blowup map identifies this locally closed subscheme with $\mathcal Y_0\setminus Z$. Define $q_H\colon \mathcal{O}_\mathcal{Y}^n\rightarrow \mathcal H_0$ to be the closure of this restriction in $\mathcal Y_0$. 

Let $\mathsf A_{\cY'}$ denote the Artin fan $[\cY'/T]$ and let $\mathsf B_{\cY'_0}$ be the closed substack corresponding to $\cY'_0$. We use analogous notation for $\cY$. Note that $\cY'_0$ is the vanishing of a monomial, so is torus invariant, and therefore uniquely defines a closed substack of $\mathsf A_{\cY'}$. 

\noindent
{\bf Claim.} The sheaf $\cG'_0 = \cG'\otimes \mathcal O_{\cY_0}$ is flat over $\mathsf B_{\cY'_0}$. 

\noindent
{\it Proof.} The sheaf $\cG'$ is flat over $\mathsf A_{\cY'}$. We would like to base change to $\mathcal Y_0$, namely, to pullback along the inlcusion of $\mathsf B_{\mathcal Y_0}$. Flatness is preserved by arbitrary base change. \qed

This is the form of algebraic transversality we encounter for sheaves on expansions, outlined in the background section. We show that the candidate sheaf $\mathcal H_0$ also has this form of transversality. 

\begin{proposition}\label{prop: candidate-is-flat}
The sheaf $\mathcal H_0$ is flat over $\mathsf B_{\cY_0}$. 
\end{proposition}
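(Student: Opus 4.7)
The plan is to check flatness of $\mathcal{H}_0$ over $\mathsf{B}_{\cY_0}$ by analyzing $\mathcal{H}_0$ one irreducible component at a time, descending $\cG'$ from strict transforms via the inductive hypothesis of Proposition~\ref{prop: toric-collapse} in strictly smaller dimension, and then assembling the results. Since flatness over the Artin substack is local, it suffices to work in the affine toric chart $U_i$ with coordinates $x_1, \ldots, x_k, y_{k+1}, \ldots, y_r$ introduced above; here $\cY_0$ is the coordinate hyperplane arrangement and $Z = \{x_1 = \cdots = x_k = 0\}$ is the center of the blowup.

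First I identify the restriction of $\mathcal{H}_0$ to each irreducible component of $\cY_0$. For a component $Y_j$ with $j > k$, the blowup is an isomorphism over $Y_j$ (since $Y_j$ does not contain $Z$), and $\mathcal{H}_0|_{Y_j}$ coincides with $\cG'|_{Y_j}$, which is transverse by hypothesis. For a component $Y_i$ with $i \leq k$, the strict transform $\widetilde Y_i \subset \cY'$ is a toric blowup $\Bl_Z Y_i$ of the smooth toric variety $Y_i$ along the stratum $Z$. The restriction $\cG'|_{\widetilde Y_i}$ is algebraically transverse, and by the vertical triviality of $\cG'|_E$ its restriction to the exceptional $\widetilde Y_i \cap E$ is pulled back from $Z$. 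The inductive hypothesis in dimension $r-1$ then descends $\cG'|_{\widetilde Y_i}$ to a transverse sheaf on $Y_i$, which by universality must coincide with the closure $\mathcal{H}_0|_{Y_i}$.

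To upgrade this componentwise transversality to flatness of $\mathcal{H}_0$ over $\mathsf{B}_{\cY_0}$, I would verify the appropriate Tor-vanishing against structure sheaves of strata. The structure sheaf of $\cY_0$ admits an inclusion--exclusion decomposition in $K$-theory in terms of the $Y_i$ and their intersections as in Section~\ref{sec: basic-formulae}, and analogous decompositions hold for all deeper strata. Componentwise transversality of $\mathcal{H}_0|_{Y_i}$, combined with a further application of the inductive hypothesis to intersections $Y_I = \bigcap_{i \in I} Y_i$ that contain $Z$, gives the required Tor-vanishing along every face of $\mathsf{B}_{\cY_0}$, which is the defining condition for flatness over the Artin substack.

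The main obstacle lies in ruling out embedded or torsion components of $\mathcal{H}_0$ along $Z$ that could be introduced by the closure operation; such pathologies would force the restriction of $\mathcal{H}_0$ to deeper strata to fail transversality. Vertical triviality of $\cG'|_E$ is precisely what prevents this: it forces the limiting value of $\cG'|_{\widetilde Y_i}$ along $\widetilde Y_i \cap E$ to be constant along the fibers of $E \to Z$, so that the descended sheaves on distinct $Y_i$ carry a common pullback limit on $Z$ and the global closure is compatible with all these descents simultaneously. Making this into a clean argument may require an explicit affine computation near a point of $Z$ to exhibit the vanishing of the relevant associated primes, after which the componentwise data assembles formally into flatness over $\mathsf{B}_{\cY_0}$.
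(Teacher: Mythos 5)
Your first half matches the paper's strategy: flatness over $\mathsf B_{\cY_0}$ is reduced (via descent of flatness through normalization) to a componentwise statement, and the inductive hypothesis in dimension $r-1$, applied to the blowups $Y_i'\to Y_i$ using vertical triviality of $\cG'|_{E}$, descends $\cG'|_{Y_i'}$ to a transverse sheaf $\mathcal H_i$ on each component $Y_i$. That part is correct and is exactly condition (ii) of the paper's reduction.

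The genuine gap is in the second half. The reduction requires not only that each componentwise closure $\mathcal H_i$ be transverse, but that the restriction of the \emph{global} closure satisfies $\mathcal H_0|_{Y_i}=\mathcal H_i$. A priori one only gets a surjection $\mathcal H_0|_{Y_i}\twoheadrightarrow\mathcal H_i$ (the global kernel restricts into the componentwise kernel $\mathcal K_i$); equality amounts to showing that every section of $\mathcal K_i$ extends to a section of the global kernel on $\cY_0$. You name this issue (torsion/embedded components along $Z$ introduced by taking closure) but then defer it to "an explicit affine computation," which is precisely the content that needs to be supplied. The paper does this in two steps: first, a separate claim that the restrictions $\mathcal H_i|_{Y_I}$ to intersection strata agree for all $i\in I$ --- this is where vertical triviality enters a second time, identifying all these restrictions with the restriction of a single transverse sheaf on $Z$ pulled back along $E\to Z$; second, using the description of $\mathcal O_{\cY_0}$ as tuples of functions on the $Y_i$ that agree topologically on overlaps, a section $f_1\in\mathcal K_1$ is extended component by component (find $f_2\in\mathcal K_2$ agreeing on $Y_{12}$ using transversality of $\mathcal H_2$, then $f_3$ agreeing on $(Y_1\cup Y_2)\cap Y_3$, and so on). Without this extension argument the proof is not complete: "Tor-vanishing against structure sheaves of strata" is not the operative criterion here, and the assembly is not formal --- it is exactly the step where the seminormal gluing of $\cY_0$ and the agreement-on-strata claim are used.
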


Before we prove this statement, we require the following. 

\begin{lemma}\label{lem: flat-over-broken}
Let $\cY_0\to \mathsf B_{\cY_0}$ be as above, and let $Y_1,\ldots, Y_e$ be the irreducible components of $\cY_0$. Let $\cY_0^\circ$ be the union of the dense tori of the $Y_i$'s. A quotient sheaf $[\mathcal O_{\cY_0}^r\twoheadrightarrow \mathcal S_0]$ is flat over $\mathsf B_{\cY_0}$ if and only if the following two conditions hold for each $Y_i$: (i) the restriction of $\mathcal S_0$ to $Y_i$ agrees with the closure of $\mathcal S_0|_{Y_i^\circ}$, and (ii) this closure is algebraically transverse to the boundary of $Y_i$. 
\end{lemma}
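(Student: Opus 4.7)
The plan is to verify both directions after passing to a sufficiently small étale chart of $\cY_0$. Near a point lying on $\bigcap_{i\in I}Y_i$ and on no other component, $\cY_0$ has local model $\Spec A$ with $A=\mathbb{C}[x_1,\ldots,x_r]/(x_1\cdots x_k)$, where $k=|I|$ and the $x_i$ cut out the branches $Y_i$. Chasing through the fiber product $\mathsf B_\Gamma=\mathsf A_\Gamma\times_{[\A^1/\G_m]}B\G_m$, one sees that in such a chart $\mathsf B_{\cY_0}$ is the stack quotient $[\Spec(\mathbb{C}[x_1,\ldots,x_k]/(x_1\cdots x_k))/T]$, where $T=\ker(\G_m^k\xrightarrow{\mathrm{mult}}\G_m)$. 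Thus flatness of $\mathcal S_0$ over $\mathsf B_{\cY_0}$ reduces to flatness of the underlying module $M$ over the ring $R:=\mathbb{C}[x_1,\ldots,x_k]/(x_1\cdots x_k)$.

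For the only-if direction I pull back flatness component by component. The irreducible component of $\mathsf B_{\cY_0}$ indexed by $i$ receives a strict morphism from $\mathsf A_{Y_i}$, the Artin fan of $Y_i$ with its induced boundary; pulling $\mathcal S_0$ back along this inclusion preserves flatness and yields algebraic transversality of $\mathcal S_0|_{Y_i}$ to the boundary of $Y_i$. This gives condition (ii). Condition (i) follows because flatness of $M$ over $R$ forbids $x_j$-torsion in $M/x_iM$ supported in positive codimension on $Y_i$; any such torsion would produce a section of $\mathcal S_0|_{Y_i}$ concentrated on $\bigcup_{j\neq i}(Y_i\cap Y_j)$, creating a discrepancy between $\mathcal S_0|_{Y_i}$ and the closure of $\mathcal S_0|_{Y_i^\circ}$.

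For the if direction I check flatness of $M$ over $R$ using the standard criterion that $M$ is flat iff $\operatorname{Tor}_1^R(M,R/(x_i))=0$ for each $i$, equivalently $\mathrm{Ann}_M(x_i)=(x_1\cdots\widehat{x_i}\cdots x_k)\cdot M$. Condition (ii) applied to $Y_i$ says that $\{x_j\}_{j\neq i}$ is a regular sequence on $M/x_iM$, and condition (i) applied to each $Y_j$ with $j\neq i$ prevents $M/x_iM$ from acquiring $x_j$-torsion beyond that already supported on the scheme-theoretic intersection $Y_i\cap Y_j$. Together these translate precisely into the required annihilator identity.

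The main obstacle is the if direction at multi-way intersections ($k\ge 3$): one must rule out sections of $M$ that look harmless on each branch individually but spoil flatness on $\cY_0$. My plan is to induct on $k$, the base case $k=2$ being handled by tensoring the Mayer--Vietoris sequence
\[ 0\to\mathcal O_{\cY_0}\to\mathcal O_{Y_1}\oplus\mathcal O_{Y_2}\to\mathcal O_{Y_1\cap Y_2}\to 0 \]
with $M$: condition (i) on both branches makes the resulting map on $M$ injective, and then condition (ii) on each branch gives flatness by Koszul regularity. For the inductive step I stratify $\cY_0$ by the dual simplicial structure of the $Y_i$ and apply the inclusion--exclusion of Section~\ref{sec: basic-formulae} to reduce a $k$-branch configuration to successive two-branch cases on partial normalizations.
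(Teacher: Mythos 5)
Your reduction to the local model $R=\mathbb{C}[x_1,\ldots,x_k]/(x_1\cdots x_k)$ and your ``only if'' direction are fine (the group in your presentation of $\mathsf B_{\cY_0}$ should be all of $\mathbb{G}_m^k$ rather than the kernel of multiplication, but this is immaterial for flatness). The gap is in the ``if'' direction: the criterion you call standard --- that $M$ is flat over $R$ if and only if $\operatorname{Tor}_1^R(M,R/(x_i))=0$, i.e.\ $\operatorname{Ann}_M(x_i)=(x_1\cdots\widehat{x_i}\cdots x_k)M$, for each $i$ --- is false, already for $k=2$. Take $R=\mathbb{C}[x,y]/(xy)$ and $M=R^2/\langle xe_1-ye_2\rangle$. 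A direct check gives $\operatorname{Ann}_M(x)=yM$ and $\operatorname{Ann}_M(y)=xM$, yet $M$ is not flat: its fibre rank is $1$ at a generic point of either branch and $2$ at the origin. Equivalently, $0\neq xe_1=ye_2\in xM\cap yM$, and tensoring the normalization sequence $0\to R\to R/(x)\oplus R/(y)\to R/(x,y)\to 0$ with $M$ shows that $\operatorname{Tor}_1^R(M,\mathbb{C})$ surjects onto $xM\cap yM$ once the two principal $\operatorname{Tor}$'s vanish, so the local criterion fails. The correct ideal-theoretic criterion needs $\operatorname{Tor}_1^R(M,R/I)=0$ for all the stratum ideals $(x_{i_1},\ldots,x_{i_s})$, not just the principal ones; those deeper ideals are exactly where the multi-branch interaction you worry about lives, and your framing discards them at the outset. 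Your separate $k=2$ Mayer--Vietoris argument does work, but only because it secretly uses the full strength of condition (ii) --- injectivity of $x_j$ on $M/x_iM$, which is what kills $xM\cap yM$ --- rather than the annihilator identities; and the inductive step for $k\geq 3$ ``via inclusion--exclusion on partial normalizations'' is not an argument as stated, since the partial normalization $V(x_1)\sqcup V(x_2\cdots x_k)$ has a reducible second branch to which the two-branch base case does not apply.

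For comparison, the paper does not reprove this commutative algebra at all: it invokes descent of flatness through the normalization of the base $\mathsf B_{\cY_0}$ (citing the Stacks Project), after which each component of the normalization is a $\mathcal B\mathbb{G}_m$-gerbe over the Artin fan of the corresponding $Y_i$ and the statement becomes component-wise algebraic transversality. If you want a self-contained argument, the workable route is to test $\operatorname{Tor}_1^R(M,-)$ against the full inclusion--exclusion resolution by structure sheaves of all strata $V(x_{i_1},\ldots,x_{i_s})$ and check that conditions (i) and (ii) kill every term, including the intersection-type obstructions $x_{i_1}M\cap\cdots\cap x_{i_s}M$ that your proposal currently misses.
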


\begin{proof}
Flatness descends through normalization by~\cite[\href{https://stacks.math.columbia.edu/tag/0533}{Tag 0533}]{stacks-project} so it suffices to check flatness over the irreducible components of $\mathsf B_{\cY_0}$. Each of these components is a $\mathcal B\mathbb G_m$-bundle over the Artin fan of the appropriate $Y_i$. Since our bundle is a smooth morphism, this is equivalent to flatness over the Artin fan corresponding to each component, which is component-wise algebraic transversality. The result is immediate. 
\end{proof}

\noindent
{\it Proof of Proposition~\ref{prop: candidate-is-flat}}. We check conditions (i) and (ii) of Lemma~\ref{lem: flat-over-broken} directly, in reverse order. That is, first we check that for each component $Y_i$ of $\mathcal Y_0$, we check that the closure of the restriction of $\mathcal G'$ to the dense torus of $Y_i$ is transverse. Then we check that each closure induces the same sheaf on the strata of $\mathcal Y_0$ where the different $Y_i$ meet. 

Fix a component $Y_i$. It has a strict transform $Y_i'$ in $\cY_0'$, and the morphism
\[
Y_i'\to Y_i
\]
is a blowup of $Y_i$ at its intersection with $Z$, which it may contain. Call the exceptional divisor of this blowup $E_i$. The exceptional divisor $E$ of the larger blowup $\cY'\to\cY$ meets $Y_i'$ exactly in $E_i$. 

The sheaf $\mathcal H_0$ is the closure of the restriction of $\mathcal G'$ to a locally closed subscheme -- the complement of $E$ -- in $\mathcal Y_0$. The restriction of $\mathcal H_0$ to the interior torus of $Y_i$ coincides with the restriction of $\mathcal G'$ to this locus. 

We have noted that the sheaf $\mathcal G'|_{Y_i'}$ is transverse as a sheaf on $Y'_i$. The further restriction $\mathcal G'|_{E_i}$ is vertically trivial. To see this, note that we can obtain $\mathcal G'_{E_i}$ can be obtained by first taking $\mathcal G'|_{E}$ and restricting to $E_i$. Since $\mathcal G'_{E}$ is already vertically trivial, the further restriction $\mathcal G'_{E_i}$ is also vertically trivial. Since $Y_i'$ has lower dimension, by the inductive hypothesis, we can use the statement of the theorem here to conclude that  there is a sheaf of $Y_i$ that is tranverse and pulls back to $\mathcal G'|_{Y_i}$. This verifies (ii). 

We now check the condition (i), namely that, for each $i$, if we restrict $\mathcal H_0$ to $Y_i$, we recover the closure of $\mathcal H_0|_{Y_i^\circ}$. Denote this closure by $\mathcal H_i$. Recall that the closure is defined as the quotient whose kernel consists of all sections over $Y_i$ that lie in the kernel  of $[\mathcal O_{\cY_0}^n\twoheadrightarrow \mathcal H_0]$ after restriction to $Y_i^\circ$. For this, we require a description of the sections of the sheaf $\mathcal O_{\cY_0}^n$, and so of $\mathcal O_{\cY_0}$. 

We have the normalization map
\[
\bigsqcup_i Y_i\to \cY_0.
\]
The structure sheaf of $\cY_0$ consists, locally, of tuples of functions $f_i$ on $Y_i$ that glue {\it topologically} -- if $Y_I$ denotes the intersection of $Y_i$ for $i\in I$, then all the $f_i$ for $i\in I$ must give the same function upon restriction to $Y_I$. If this holds for all $I$ then such a tuple of functions is an element of $\mathcal O_{\mathcal Y_0}$. This follows from recognizing the scheme $\cY_0$ locally as a union of coordinate hyperplanes in affine space. 

We will use this to verify condition (i), but before we can do that, we need another fact: 

\noindent
{\bf Claim.} For each subset $I\subset [r]$, we have a stratum $Y_I$ of $\cY_0$. We can then consider the restrictions $\mathcal H_i|_{Y_I}$ to the {\it closed strata} for each $i$. These restrictions for different $i$ coincide. 

\noindent
{\it Proof.} By the induction, each $\mathcal H_i$ is algebraically transverse as a sheaf on $Y_i$, so checking the claimed condition on closed strata is equivalent to checking the same restriction condition on {\it locally closed} strata $Y_I^\circ$. To be precise, the stratum $Y_I^\circ$ is the locus in $Y_I$ consisting of points that are contained in $Y_i$ for all $i\in I$, and are {\it not contained} in any $Y_j$ for $j\notin I$. The equivalence between the conditions on closed strata and locally closed strata follows from restriction properties of surjections and algebraic transversality. 

Consider the blowup $\mathcal Y'\to \mathcal Y$. For those strata $Y_I$ whose preimage in $\cY'$ is a stratum that maps isomorphically back onto it, the agreement condition we are after follows from the transversality of the sheaf $\mathcal G'$. The remaining strata $Y_I^\circ$ are contained in $Z$. For each $Y_i$ that contains $Z$, there is a toric stratum of $Y_i'$ that maps isomorphically onto $Y_I^\circ$. This locally closed stratum is necessarily also a toric stratum of $E$. By transversality of $\mathcal H_i$ on $Y_i$, this restriction to $Y_I^\circ$ can be obtained by restricting the sheaf $\mathcal G'$ to $Y_i$. In turn, this time by transversality of $\mathcal G'$, it can be obtained by restricting $\mathcal G'|_{E}$ and pushing forward along the isomorphism to $Y_I^\circ$. In other words, the restrictions of the different $\mathcal H_i$ to $\mathcal Y_I^\circ$ can be identified with the restriction of $\mathcal G'|_{E}$ to corresponding locally closed torus orbits of $E$ that lie over the same torus orbit in $Z$. We know by hypothesis that $\mathcal G'|_{E}$ is vertically trivial, and so is pulled back along $E\to Z$. The sheaf that pulls back to it is necessarily transverse. Indeed, this sheaf must be equal to the restriction of $\mathcal G'|_{E}$ to a coordinate section of the projective bundle $E\to Z$, and the latter is transverse by transversality of $\mathcal G'$. We therefore identify the different restrictions of the different $\mathcal H_i$ to $Y_I^\circ$ with the restriction of a transverse sheaf on $Z$ to some fixed single torus orbit, and the agreement follows. \qed

We finally verify condition (i) for each $i$. On $Y_i$ we have a surjection $[\mathcal O_{Y_i}^n\twoheadrightarrow \mathcal H_i]$ with kernel $\mathcal K_i$. In order to check the closure condition, we need to verify that for each $f_i$ in $\mathcal K_i$, there exists an element $f$ of $\mathcal O_{\cY_0}^n$ that restricts to $f_i$ on $Y_i$. We proceed component by component. Fix an $f_1\in \mathcal K_1$ and consider its restriction to $Y_{12}$. By the agreement condition above, the restriction is contained in the kernel $\mathcal K_{12}$ of the surjection $[\mathcal O_{Y_{12}}^n\twoheadrightarrow \mathcal H_2|_{Y_{12}}]$. By transversality on $Y_2$, we can find $f_2\in\mathcal K_2$ that agrees with $f_2$ upon restriction to $Y_{12}$. By the description of $\mathcal O_{\mathcal Y_0}$ above, this means we can find $f_{12}$ on  $\mathcal O_{Y_1\cup Y_2}^n$ that extends $f_1$ and lies in the kernel of the restriction of the quotient to $Y_1\cup Y_2$. Similarly, we can find $f_3\in\mathcal K_3$ that agrees with $f_{12}$ on $(Y_1\cup Y_2)\cap Y_3$, and so on and so forth. This implies condition (i). The proof of the proposition is complete. 
\qed

We have ended up with two sheaves on $\cY_0$. The first is $\mathcal G_0$ which is the flat limit of a general fiber of the family $\mathcal G|_{U\setminus\{0\}}$. The second is constructed ``by hand'' namely $\mathcal H_0$. The latter has transversality properties, while the former fits into a flat family with the general fiber $\mathcal G|_{U\setminus\{0\}}$. We show they coincide. 

\begin{proposition}\label{prop: candidate-is-special-fiber}
The sheaves $\mathcal G_0$ and $\mathcal H_0$ coincide. 
\end{proposition}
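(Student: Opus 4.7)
The structural observation is that $\mathcal{H}_0$, being the closure of the restriction of $\mathcal{G}_0$ to $\mathcal{Y}_0 \setminus Z$, is terminal among quotients of $\mathcal{O}_{\mathcal{Y}_0}^n$ extending this restriction. Since $\mathcal{G}_0$ is another such quotient, there is a canonical surjection $\mathcal{G}_0 \twoheadrightarrow \mathcal{H}_0$ whose kernel is a subsheaf of $\mathcal{G}_0$ supported set-theoretically on $Z \cap \mathcal{Y}_0$. It therefore suffices to prove that the Hilbert polynomials of $\mathcal{G}_0$ and $\mathcal{H}_0$ agree with respect to a very ample line bundle $L$ on $\mathcal{Y}$.

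I would first reduce, using the two degenerations, to a comparison on $\mathcal{Y}'_0$. By the flatness lemma preceding this proposition, both $\mathcal{G}$ and $\mathcal{G}'$ are flat over an open neighborhood $U \subset \mathbb{P}^1$ of $0$, so $P(\mathcal{G}_0) = P(\mathcal{G}_t)$ and $P(\mathcal{G}'_0) = P(\mathcal{G}'_t)$ for generic $t \in U$. For such $t$, the fiber $\mathcal{Y}_t$ lies in the complement of the vanishing locus of the monomial defining the degeneration, hence is disjoint from $Z$; therefore the blowup map $p \colon \mathcal{Y}'_t \to \mathcal{Y}_t$ is an isomorphism and identifies $\mathcal{G}'_t$ with $\mathcal{G}_t$. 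Chaining the equalities yields $P(\mathcal{G}_0) = P(\mathcal{G}'_0)$, so the proposition reduces to the identity $P(\mathcal{G}'_0) = P(\mathcal{H}_0)$.

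For this final identity I would expand both Euler characteristics via the inclusion/exclusion formulas of Section~\ref{sec: basic-formulae}. Locally near $Z$ the fiber $\mathcal{Y}'_0$ decomposes as $kE + \sum_i \tilde{Y}_i$, with each $\tilde{Y}_i \to Y_i$ a blowup along $Y_i \cap Z$ and the exceptional $E$ occurring with multiplicity $k = \operatorname{codim} Z$. Writing $[\mathcal{O}_{kE}] = k[\mathcal{O}_E] + (\text{higher order terms in } [\mathcal{O}_E])$ as in Section~\ref{sec: basic-formulae} and expanding $[\mathcal{O}_{\mathcal{Y}'_0}]$ by inclusion/exclusion, strata disjoint from $E$ pair off with strata of $\mathcal{Y}_0$ disjoint from $Z$ through the isomorphism provided by $p$; on these pairs Lemma~\ref{lem: blowup-formula}, applied to each $\tilde{Y}_i \to Y_i$, combined with the inductive identification of $p^\star(\mathcal{H}_0|_{Y_i})$ with $\mathcal{G}'_0|_{\tilde{Y}_i}$ already extracted in the proof of Proposition~\ref{prop: candidate-is-flat}, gives the matching of Euler characteristics. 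The strata supported on $E$ are projective bundles over strata of $Z$, and by vertical triviality of $\mathcal{G}'|_E$ together with the projective bundle Euler characteristic lemma from Section~\ref{sec: toolbox} their contributions collapse to Euler characteristics on strata of $Z$, which match exactly the contributions coming from strata of $\mathcal{Y}_0$ meeting $Z$ in the inclusion/exclusion expansion of $P(\mathcal{H}_0)$.

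The main obstacle is the combinatorial bookkeeping in the previous paragraph: the non-reduced $kE$ contributes correction terms of the form $[\mathcal{O}_E]^j$ realised as alternating sums over codimension-$j$ toric strata of $E$, and one must verify that vertical triviality propagates through every such stratum and that the resulting alternating sum precisely reproduces the inclusion/exclusion expansion on the $\mathcal{Y}_0$ side. The geometric content behind the cancellation is that $p^\star L$ restricts trivially to the fibers of $E \to Z$, so all Euler characteristic contributions from $E$-strata collapse to Euler characteristics on $Z$ in a way that matches the $\mathcal{Y}_0$-side contributions coming from strata that meet $Z$.
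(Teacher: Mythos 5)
Your proposal follows the paper's own strategy almost step for step: the terminal-object observation giving a surjection $\mathcal G_0\twoheadrightarrow\mathcal H_0$, the reduction to equality of Hilbert polynomials, the chain $P(\mathcal G_0)=P(\mathcal G_t)=P(\mathcal G'_t)=P(\mathcal G'_0)$ through the generic fiber, and the termwise matching via inclusion/exclusion of the strata away from the exceptional locus. All of that is correct and is exactly what the paper does.

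The gap is at the one step you yourself flag as ``the main obstacle'' and then do not carry out. After the strata away from $E$ cancel, what remains is the identity
\[
\chi(Z,\mathcal H_0|_Z) \;=\; \chi_E+\sum_{J}(-1)^{|J|}\chi_{E,J},
\]
where $\chi_E$ and $\chi_{E,J}$ are Euler characteristics of $\mathcal G'_0$ on the \emph{non-reduced} divisor $\ell E$ and its intersections with the components $Y'_j$. Asserting that the $E$-contributions ``collapse to Euler characteristics on strata of $Z$'' and ``match exactly'' is not a proof: the expansion $[\mathcal O_{\ell E}]=\ell[\mathcal O_E]+(\text{derived self-intersection terms})$ produces classes $[\mathcal O_E]^{\otimes^{\mathbf L}j}$ with binomial coefficients, and these self-intersections are represented by higher-codimension strata of $E$ together with correction terms pulled back from $Z$ (since $\mathcal O(E)|_E$ is not trivial on $E$, only relatively $\mathcal O(-1)$ up to a twist from the base). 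Tracking all of these through the alternating sum over the $Y'_j$, and verifying that vertical triviality of $\mathcal G'|_E$ forces the total to equal $\chi(Z,\mathcal H_0|_Z)$, is precisely the ``tedious analysis of binomial coefficients'' that the paper explicitly declines to do directly. The paper instead proves the identity by a geometric trick (Remark~\ref{rem: absurdity argument}): it builds an auxiliary toric degeneration of $\mathbb P(\oplus_i L_i)\to Z$ whose special fiber contains $Z$ with the same normal bundle and the same multiplicity-$\ell$ exceptional after blowup, pulls back the transverse sheaf $\mathcal H_0|_Z$, and computes the constant Euler characteristic of the flat family in three ways. Your argument needs either that auxiliary degeneration or the explicit combinatorial verification; as written it has neither.

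A secondary, more minor point: you reduce to equality of Hilbert polynomials but then only compare single Euler characteristics. To upgrade from $\chi$ to the full polynomial you need the Bertini-type argument (restricting to generic hyperplane sections preserves transversality over $\mathsf B_{\mathcal Y_0}$ and $\mathsf B_{\mathcal Y'_0}$, and \cite[Lemma 1.2.1]{GeometryModuliSheaves} recovers the polynomial from Euler characteristics of iterated linear sections), which should be stated explicitly.
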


\begin{proof}
We first show that the Hilbert polynomials of $\mathcal G_0$ and $\mathcal H_0$, with respect to a projective embedding of $\cY$, coincide. By \cite[Lemma 1.2.1]{GeometryModuliSheaves}, it is enough to show the Euler characteristics of the restrictions of $\cG_0$ and of $\mathcal H_0$ to general linear sections of each dimension are the same.

We begin by comparing the Euler characteristics. Let $\mathcal G_\eta$ be a general fiber of $\mathcal G$, or equivalently of $\mathcal G'$, over $\mathbb P^1$. Since Euler characteristic is constant in flat families we learn:
\[
\chi(\cY'_0,\mathcal G'_0) = \chi(\cY_\eta,\cG_\eta) = \chi(\cY_0,\mathcal G_0).
\]
We must therefore prove equality of Euler characteristics for two sheaves which we know satisfy a transversality property: \begin{equation}\label{eqn:EqualEulerChar} \chi(\cY'_0,\mathcal G'_0) = \chi(\cY_0,\mathcal H_0).\end{equation} The transversality property will allow us to compute both Euler characteristics by the inclusion/exclusion formulae of Section~\ref{sec: toolbox}. 

Indeed, consider the $K$-theory of $\mathcal Y$ and in it, the class of the structure sheaf of the special fiber $\mathcal Y_0$. Since $\mathcal Y$ is smooth the toric divisors form regular sequences. The special fiber is cut out by a monomial function -- a product of powers of sections cutting out toric divisors. Since powers of regular sequences are regular sequences, we obtain a formula for the structure sheaf of $\mathsf B_{\cY_0}$ in $K$-theory -- it is the sum of the irreducible components, minus the double intersections, plus the triple intersections, and so on. Since $\mathcal H_0$ is flat over $\mathsf B_{\cY_0}$ we obtain a similar formula for $\mathcal H_0$ by pullback -- sum the Euler characteristics of the restrictions of $\mathcal H_0$ to irreducible components, subtract the restrictions to codimension $1$ strata, and so on and so forth. The analogous formula holds for the Euler characteristic of $\mathcal G'_0$ on $\cY_0'$ in terms of its restrictions to components and strata of $\cY_0'$. 

To establish equation (\ref{eqn:EqualEulerChar}) we compare the above formulae term-by-term. Note that $\cY'_0$ is typically non-reduced along the exceptional $E$, even when $\cY_0$ is reduced, so some care is required in working with it. Contributions to 
$\chi(\cY'_0,\mathcal G'_0)$ are indexed by the cones of $\Sigma_{\mathcal Y'}$ that map to the positive ray of $\RR$, and similarly contributions to $\chi(\cY_0,\mathcal H_0)$ are indexed by cones of $\Sigma_{\cY}$. 

Recall $\sigma$ is the stratum dual to $Z$ and let $\rho_E$ be the ray associated to the exceptional $E$. There is a bijection $B$ from cones of $\Sigma_{\mathcal Y'}$ which do not contain $\rho_E$ as a ray to cones in $\Sigma_{\mathcal Y}\backslash \{\sigma_Z\}$. Note that if $\mathcal F$ is a sheaf on $Y$ transverse to a normal crossings divisor, the pullback to a blowup $Y'\to Y$ at a stratum has the same Euler characteristic; see Section~\ref{sec: toolbox}. Thus $B$ preserves Euler characteristic associated to strata dual to all such cones, and contributions cancel termwise. 

We are left to show that $\chi (Z,\mathcal H_0|_Z)$ coincides with the contribution from exceptional cones. We now introduce notation so that we might state this equality explicitly. Write $\rho_1,\ldots, \rho_\ell$ for rays of $\sigma$. Note that each ray maps onto the positive ray in $\mathbb R$, induced by the degeneration, with some slope. These slopes determine the non-reduced structure along the strata of $\cY_0'$. By construction, each of the $\rho_i$ map with slope $1$, but $\rho_E$ maps with slope $\ell$. 

With this setup, we can state the identity required to equate the two Euler characteristics. Let $\chi_E$ be the Euler characteristic $\mathcal G'_0$ restricted to the divisor $\ell E$ and for a non-empty subset $J\subset \{1,\ldots,\ell\}$, let $\chi_{E,J}$ be the Euler characteristic of $\mathcal G'_0$ restricted to the stratum given by $\ell E\cap\bigcap j_{\in J} Y'_j$. Then we need to show that
\begin{equation}\label{eqn: exceptional equality}
\chi (Z,\mathcal H_0|_Z) = \chi_E +\sum_{J\subset [\ell]} (-1)^{|J|} \chi_{E,J}.
\end{equation}
The equality can be shown by direct calculation, but we avoid the resulting tedious analysis of binomial coefficents (an offence for which the non-reduced structure along $E$ should be blamed) and use a geometric argument. The geometric argument is recorded in Remark~\ref{rem: absurdity argument}.

Assuming Equation \ref{eqn: exceptional equality} we complete the proof. Choose a very ample line bundle on $\mathcal Y$. By a Bertini argument, the restriction of $\mathcal G$, $\mathcal G'$, and $\mathcal H_0$ to finitely many generic elements of any ample linear system remain flat over $B_{\mathcal{Y}_0}$ and $B_{\mathcal{Y}'_0}$ as appropriate. The above argument thus implies that the Hilbert polynomials of $\mathcal G_0$ and $\mathcal{H}_0$ coincide. Furthermore, since $\mathcal G'_0$ is a quotient that agrees with $\mathcal H'_0$ on the union of the interiors $\bigcup_i Y_i^\circ$, and the latter has been defined as a closure, the sheaf $\mathcal H_0$ is a quotient of $\mathcal G_0$. To complete the proof recall that any surjection between sheaves of the same Hilbert polynomial is an isomorphism. 
\end{proof}

We record the required identity from the proof. 

\begin{remark}\label{rem: absurdity argument} The basic idea is to build an auxiliary degeneration that contains the stratum $Z$ as a stratum in its special fiber, and furthermore where the normal bundle of $Z$ is the same as what occurs above. We then build a degeneration of a transverse sheaf that recreates $\mathcal H_0|_{Z}$ in the special fiber. Effectively, this exhbits the local geometry of $Z$ in the picture above as the special fiber of degeneration/smoothing. The advantage is that we have a general fiber that is unchanged by blowup of $Z$, with the same Euler characteristic, and since the Euler characteristic can be computed from the general fiber, we can compute the contribution of the offending non-reduced component indirectly. We now carry this out.

The normal bundle of the stratum $Z\subset \mathcal Y$ admits a natural splitting $L_1,\ldots,L_\ell$, let $L_i^\star$ denote the associated torus bundle, and let $\mathcal T = \oplus_i L_i^\star$. We produce an equivariant degeneration/compactification of $\mathcal T$ over $\mathbb A^1$. Such a degeneration corresponds to a polyhedral subdivision of the cocharacter space $N_\RR$ of the fiber torus of $\mathcal T$. The subdivision is constructed by starting a full-dimensional unimodular simplex in $N_\RR$, and at each vertex, adding a ray to make the star of the vertex isomorphic to the fan of projective space $\mathbb P^{\ell-1}$. The recession fan is the fan of $\mathbb P^{\ell-1}$. See Figure~\ref{fig: degeneration-fan} for an illustration when $\ell = 3$.
\begin{figure}[h!]

\tikzset{every picture/.style={line width=0.75pt}} 

\begin{tikzpicture}[x=0.75pt,y=0.75pt,yscale=-1,xscale=1]

\draw    (103,87) -- (103,165) ;
\draw [shift={(103,165)}, rotate = 90] [color={rgb, 255:red, 0; green, 0; blue, 0 }  ][fill={rgb, 255:red, 0; green, 0; blue, 0 }  ][line width=0.75]      (0, 0) circle [x radius= 3.35, y radius= 3.35]   ;
\draw [shift={(103,87)}, rotate = 90] [color={rgb, 255:red, 0; green, 0; blue, 0 }  ][fill={rgb, 255:red, 0; green, 0; blue, 0 }  ][line width=0.75]      (0, 0) circle [x radius= 3.35, y radius= 3.35]   ;
\draw    (103,165) -- (173,165) ;
\draw [shift={(173,165)}, rotate = 0] [color={rgb, 255:red, 0; green, 0; blue, 0 }  ][fill={rgb, 255:red, 0; green, 0; blue, 0 }  ][line width=0.75]      (0, 0) circle [x radius= 3.35, y radius= 3.35]   ;
\draw    (103,87) -- (173,165) ;
\draw    (64.99,207.76) -- (103,165) ;
\draw [shift={(63,210)}, rotate = 311.63] [fill={rgb, 255:red, 0; green, 0; blue, 0 }  ][line width=0.08]  [draw opacity=0] (8.93,-4.29) -- (0,0) -- (8.93,4.29) -- cycle    ;
\draw    (222.34,190.62) -- (173,165) ;
\draw [shift={(225,192)}, rotate = 207.44] [fill={rgb, 255:red, 0; green, 0; blue, 0 }  ][line width=0.08]  [draw opacity=0] (8.93,-4.29) -- (0,0) -- (8.93,4.29) -- cycle    ;
\draw    (84.11,39.79) -- (103,87) ;
\draw [shift={(83,37)}, rotate = 68.2] [fill={rgb, 255:red, 0; green, 0; blue, 0 }  ][line width=0.08]  [draw opacity=0] (8.93,-4.29) -- (0,0) -- (8.93,4.29) -- cycle    ;
\draw    (390.62,66.93) -- (404,130) ;
\draw [shift={(404,130)}, rotate = 78.02] [color={rgb, 255:red, 0; green, 0; blue, 0 }  ][fill={rgb, 255:red, 0; green, 0; blue, 0 }  ][line width=0.75]      (0, 0) circle [x radius= 3.35, y radius= 3.35]   ;
\draw [shift={(390,64)}, rotate = 78.02] [fill={rgb, 255:red, 0; green, 0; blue, 0 }  ][line width=0.08]  [draw opacity=0] (8.93,-4.29) -- (0,0) -- (8.93,4.29) -- cycle    ;
\draw    (457.22,151.86) -- (404,130) ;
\draw [shift={(460,153)}, rotate = 202.33] [fill={rgb, 255:red, 0; green, 0; blue, 0 }  ][line width=0.08]  [draw opacity=0] (8.93,-4.29) -- (0,0) -- (8.93,4.29) -- cycle    ;
\draw    (358.99,180.76) -- (404,130) ;
\draw [shift={(357,183)}, rotate = 311.57] [fill={rgb, 255:red, 0; green, 0; blue, 0 }  ][line width=0.08]  [draw opacity=0] (8.93,-4.29) -- (0,0) -- (8.93,4.29) -- cycle    ;
\draw    (210,120) .. controls (249.2,90.6) and (269.19,147.64) .. (307.62,121.69) ;
\draw [shift={(310,120)}, rotate = 143.13] [fill={rgb, 255:red, 0; green, 0; blue, 0 }  ][line width=0.08]  [draw opacity=0] (10.72,-5.15) -- (0,0) -- (10.72,5.15) -- (7.12,0) -- cycle    ;

\draw (227,142) node [anchor=north west][inner sep=0.75pt]   [align=left] {\begin{minipage}[lt]{45.44pt}\setlength\topsep{0pt}
\begin{center}
{Recession fan}
\end{center}

\end{minipage}};
\end{tikzpicture}
\caption{A compactification-degeneration of the total space of the normal bundle of $Z$.}\label{fig: degeneration-fan}
\end{figure}
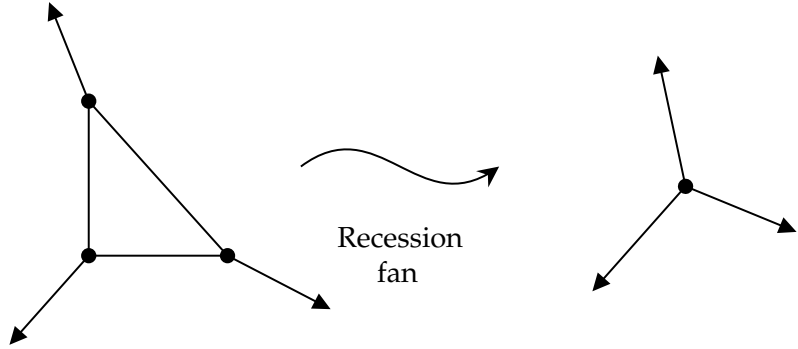

The result is a degeneration of $\mathbb P(\oplus L_i)$ into a degeneration of $\ell$ components meeting at $Z$. The normal bundle of $Z$ in this degeneration is exactly $\oplus_i L_i$, i.e. the same as its normal bundle in $\cY$. Let $\mathcal Z$ denote the full degeneration. It admits a contraction $\mathcal Z\to Z$. We can blow up the central fiber at this copy of $Z$ to introduce a copy of $E$ as above, with exactly the same multiplicity occurring in the proof above. Denote this new degeneration $\mathcal Z'$. 

We had a transverse sheaf on $Z$, namely the restriction of $\mathcal H_0$. We can pull back to $\mathcal Z$ along the contraction. We compute its Euler characteristic in three ways. The Euler characteristic of a sheaf pulled back along a projective space bundle is equal to the Euler characteristic of that sheaf on the base. This means that in the general fiber, we recover the Euler characteristic of $\mathcal H_0|_{Z}$. The Euler characteristics of the two special fibers, before and after blowup, must be equal because they are flat limits of a general fiber. Computing these by the inclusion/exclusion formula exactly then recovers the identity above. 
\end{remark}

\noindent
{\it Conclusion of Proof of Proposition~\ref{prop: toric-collapse}}. We have identified the restriction of $\mathcal G$ to the special fiber of $\mathcal Y\to\mathbb P^1$ with the known sheaf $\mathcal H_0$. We want to show that this implies that $\mathcal G$ is algebraically transverse, namely, flat over $[\mathcal Y/T]$. Since $\mathcal G'$ is flat over the $[\cY'/T]$, this check is only nontrivial in a neighborhood of $Z$. 

Flatness can be checked locally on the domain, so we can replace $\cY$ with an open set $U$ isomorphic to $\mathbb A^k\times\mathbb G_m^{r-k}$, and flatness over $[Y/T]$ is equivalent to flatness over $[\mathbb A^k/\mathbb G_m^k]$. We can shrink $U$ on $\cY$ until the map from $U$ to $[\mathbb A^k/\mathbb G_m^k]$ factorizes through $\mathbb A^k$. We will check flatness here. It will be useful to recall that the degeneration induced a map $\mathbb A^k\to\mathbb A^1$ given by the product of the coordinates, so the union of coordinate subspaces is the special fiber of this degeneration. 

Let $R$ denote the coordinate ring of this affine space $\mathbb A^k$ and fix coordinates $x_1,\ldots,x_k$ on this affine space. If we restrict to the closed point $(0,\ldots,0)$ in this $\mathbb A^k$ and pull back the sheaf $\mathcal G$, we obtain a sheaf over a point, which is certainly flat. 

Now consider the curve $\mathbb V(x_2,\ldots,x_k)$ and the stratum in $U$ that lies above it. Pull back $\mathcal G$ to this locus. We claim the restriction is flat over $\mathbb V(x_2,\ldots,x_k)$. To see this, we apply the local criterion for flatness~\cite[\href{https://stacks.math.columbia.edu/tag/00ML}{Tag 00ML}]{stacks-project} -- it suffices to check that multiplication by $x_1$ is injective on sections of $\mathcal G\otimes R/(x_2,\ldots,x_k)$. The restriction of $\mathcal G$ to $\mathbb V(x_2,\ldots,x_k)$ can be obtained by first restricting to the special fiber $\mathcal Y_0$ and then pulling back to $\mathbb V(x_2,\ldots,x_k)$. By Proposition~\ref{prop: candidate-is-special-fiber}, we see $\mathcal G\otimes R/(x_2,\ldots,x_k)$ is the restriction of $\mathcal H_0$ to some component of the special fiber, and then pulling back to $\mathbb V(x_2,\ldots,x_k)$. The restriction of $\mathcal H_0$ to any component is a transverse sheaf. So in particular, the multiplication by $x_1$ described above is injective on sections. We conclude that $\mathcal G\otimes R(x_2,\ldots,x_k)$ is flat over the axis $\mathbb A^1_{x_1}$. 

We continue inductively in this fashion. Since the restriction of $\mathcal G$ to any component of the special fiber is algebraically transverse, we conclude flatness over $\mathbb V(x_k)$ in this local picture. At the final step, we would like to know that multiplication by $x_k$ is injective on sections of $\mathcal G$ in this local neighborhood. This is immediate from the fact that $\mathcal G$ forms a flat family over $\mathbb A^1$ and $\mathbb V(x_k)$ is an open inside an irreducible component of the special fiber. Indeed, if any sections are supported on $\mathbb V(x_k)$, these sections are also supported over the origin in $\mathbb A^1$ contradicting flatness. The proof is complete. \qed  

\subsection{Proof in the affine case}  We prove the affine version of the result, stated in Proposition~\ref{prop: local-collapse}. We reset notation to $X = \mathbb A^r$ and $X'\to X$ a blowup a coordinate linear space $Z$. We have a sheaf $\mathcal F'$, given as the quotient of a trivial vector bundle, and $\mathcal F'$ is vertically trivial along $E\to Z$. Let $\mathcal F^\circ$ denote the restriction of this sheaf to the dense torus. 

Recall from Section~\ref{sec: pl-G-strat} that there is a stratification of the cocharacter space $N_\RR$ of the dense torus of $X$ that has the following property. For two points $v$ and $v'$ in the same stratum, if we blowup by introducing rays at these points, the restriction of $\mathcal F'$ to the torus interiors of the exceptional divisors give the same sheaf. We now make the following:

\noindent
{\bf Claim.} If $\mathcal Y_\Sigma$ is a smooth and projective toric compactification that refines the Gr\"obner stratification, then then the closure of $\mathcal F^\circ$, viewed as a quotient of a trivial sheaf of rank $n$, in $\mathcal Y_\Sigma$ is algebraically transverse. 

{\it Proof.} There certainly exists some smooth projective toric compactification such that the closure of $\mathcal F^\circ$ is algebraically transverse. The sequence of blowups of $\mathcal Y_\Sigma$ along smooth centers is cofinal in the system of all toric compactifications of a torus, so we can assume that there is a sequence of blowups
\[
\cY_N\to\cY_{N-1}\to\cdots\to \cY_1\to\cY_0 = \cY_\Sigma
\]
such that on $\cY_N$, the closure $\mathcal F_{N}$ of $\mathcal F^\circ$ is algebraically transverse. However, as all the fans of all $\cY_i$ necessarily refine the Gr\"obner stratification, the restriction of $\mathcal F_N$ to the exceptional divisor of $\cY_N\to\cY_{N-1}$ is vertically trivial. By applying Proposition~\ref{prop: toric-collapse} we conclude that it is the pullback of a transverse sheaf $\mathcal F_{N-1}$ on $\cY_{N-1}$. The claim follows by induction. \qed

We can finally deduce Proposition~\ref{prop: local-collapse}. Consider the Gr\"obner stratification of $\mathcal F^\circ$, defined in Section~\ref{sec: pl-G-strat}. The setup implies that the relative interior of each cone in the fan of $\mathbb A^r$ is contained in a single stratum of the Gr\"obner stratification. Now choose a smooth projective toric fan that refines the Gr\"obner stratification and contains this cone. This can be done by taking the common refinement of the Gr\"obner stratification with an arbitrary projective smooth toric compactification of $\mathbb A^r$. The cone corresponding to the original in $\mathbb A^r$ is unchanged. By iterated stellar subdivisions away from this cone, we find the required compactification. We deduce transversality of the closure of $\mathcal F^\circ$ on this toric compactification and obtain it on $\mathbb A^r$ by restriction. This concludes the affine local version of the statement. \qed

\subsection{The full statement} We deduce Theorem~\ref{thm: general-collapse}. We start with a smooth and projective simple normal crossings pair $(X,D)$ and let $p\colon X'\to X$ be its blowup at a stratum at $Z$. We have a quotient
\[
p^\star\mathcal E\twoheadrightarrow \mathcal F'
\]
that is vertical trivial along the exceptional $E$ over $Z$. By projectivity, we can replace $\mathcal E$ with some large number of copies of $\mathcal O_X(-n)$ for large $n$. The sheaf $\mathcal F$ is the closure of the restriction $X'\setminus E$ in $X$. Once we show that it is transverse the result will follows. To show this, observe that we can check transversality locally on $X'$. By shrinking, we can replace $X$ with an affine variety. By then picking generators, we can embed this affine variety into affine space. Finally, by including the local equations for $D$ in the generators and the pushing forward $\mathcal F$, we reduce the check of transversality to affine space itself, with respect to a subset of the coordinate boundary. The result follows from affine case. \qed

\section{$K$-tropicalizations and balancing}\label{sec: K-tropicalizations}


A point of the logarithmic Quot scheme $\mathsf{Quot}(X|D,\mathcal E)$ is a logarithmic surjection of sheaves, defined in~\cite{Logquot}. It is an equivalence class of quotients of pullbacks of $\mathcal E$ to degenerations of $X$, where two quotients are identified if they coincide on a ``further pullback''. The goal of this section is to define the {\it $K$-tropicalization} of such an object. It will be a polyhedral complex structure with polynomial decorations. We treat this in greater generality than we need -- in the end we will reduce to the toric case -- but believe that $K$-tropicalizations may be needed to study logarithmic coherent sheaves in later work.

\subsection{Tropical decorations for sheaves}\label{sec:TropDecorations} Essentially by definition, a logarithmic surjection can be represented on an {\it expansion} of $X$. One typically makes geometric arguments on these representatives.

Given a polyhedral complex $\Gamma$ decomposing $\Sigma_X$ into cells, the cone over it gives rise to a degeneration $\mathcal X_\Gamma\to \A^1$, by taking the constant family and subdividing. We assume that (i) the vertices are integer points, and (ii) the cone over $\Gamma$ is smooth as a cone complex.  We let $X_\Gamma$ denote the special fiber, with its logarithmic structure. There is also a relative Artin fan over $\A^1$, and we denote its special fiber by $\mathsf B_\Gamma$. 

Each component of $X_\Gamma$ inherits a simple normal crossings divisor coming from the union of the singular locus and the limit of $D$ in the special fiber. It also has a map
\[
q_\Gamma\colon X_\Gamma\to X.
\]
When the expansion is clear from context, we will use $q$, rather than $q_\Gamma$, to denote the map to $X$. We work with {algebraically transverse quotients of $\mathcal E$} on the expansion $X_\Gamma$, namely quotients $[q^\star\mathcal E\twoheadrightarrow \mathcal F]$ such that $\mathcal F$ is flat over $\mathsf B_\Gamma$. 

Recall that, if $X$ is a projective variety, $\mathcal F$ is a coherent sheaf on $X$, and $L_0,\ldots,L_d$ are line bundles, for a vector of integers $\underline n = (n_0,\ldots,n_d)$ we can consider the function
\[
P_X(F,L,\underline n) = \chi(X,\mathcal F\otimes\bigotimes_i L_i^{n_i}).
\]
This function is a multivariable polynomial in the $\underline n$. This follows from the Grothendieck--Riemann--Roch theorem. The polynomial is additive in short exact sequences. 

Given a pair $(X,D)$ we fix $L_0 = \mathcal O_X(1)$ to be any ample line bundle and let $L_i = \mathcal O_X(-D_i)$ be the ideal sheaf of $D_i$. 

\begin{definition}[$K$-weights]
Let $\mathcal F$ be an algebraically transverse sheaf on an expansion $X_\Gamma\to X$. Let $\gamma$ be a cell in $\Gamma$ and let $X_\gamma$ be the corresponding stratum of $X_\Gamma$. Its {\it $K$-weight}, denoted $k_\gamma(\underline t)$ of the cell $\gamma$ is the multivariable Hilbert polynomial, in variables $t_i$ and with respect to the line bundles $L_i$, of the sheaf $\mathcal F_\gamma:=\mathcal F|_{X_\gamma}$. 
\end{definition}

\noindent
{\bf $K$-tropicalization.} We defined the $K$-weights associated to an algebraically transverse sheaf on a particular expansion $X_\Gamma'\to X$. They descend to the Gr\"obner stratification as a consequence of Theorem~\ref{thm: canonical-transversalization} proved in the previous section, together with the facts about holomorphic Euler characteristics of bundles and blowups recorded in Section~\ref{sec: basic-formulae}. We refer to the Gr\"obner stratification together with its $K$-weights as the {\it $K$-tropicalization}. 

\begin{remark}
There are richer versions of the $K$-weights. Instead of just keeping track of the Hilbert polynomial, one can go further using $K$-theory of coherent sheaves on $X$ modulo numerical equivalence. We can record the function on $K_{\sf num}(X)$ obtained by pulling back classes from this numerical $K$-theory of $X$ tensoring with $\mathcal F_\gamma$ and taking Euler characteristic.
\end{remark}

When $(X,D)$ is a toric pair, there is significant redundancy in the $K$-weights. Indeed, because the toric boundary generates the Picard group, any line bundle is represented by an invariant line bundle, and the data is essentially equivalent to the constant terms of the $K$-weights The following definition is therefore much cleaner in this case.

\begin{definition}[Toric $K$-weights]
Let $X$ be a toric variety and $D$ the full toric boundary divisor. Let $\mathcal F$ be an algebraically transverse sheaf on an expansion $X_\Gamma\to X$. Let $\gamma$ be a cell in $\Gamma$ and let $X_\gamma$ be the corresponding stratum of $X_\Gamma$. The {\it toric $K$-weight}, or simply {\it $K$-weight} when the context is clear, is the Euler characteristic of the sheaf $\mathcal F_\gamma:=\mathcal F|_{X_\gamma}$. 
\end{definition}

In order to control the discrete data of the logarithmic Quot space, it will be useful to explain how $K$-weights behave under the operation of taking asymptotic (also known as {\it recession}) fan. 

As above, let $\Gamma$ be a rational polyhedral decomposition of $\Sigma$ with integer vertices. The {\it asymptoic fan} of $\Gamma$, denoted $\Gamma_0$, is the $0$ fiber of the cone
\[
\mathsf{Cone}(\Gamma)\to\mathbb R_{\geq 0},
\]
see for example~\cite{BGS11,NishSieb06}. Fix a polynomial weighting
\[
k\colon \Gamma \to\mathbb Q[\underline t].
\]
We view $\Gamma$ here as a set of polyhedra, rather than as its topological realization. 
The set of faces is a partially ordered set by inclusion. Every face $F$ of $\Gamma$ has a dimension, which we denote ${\sf dim} \ F$. There is a specialization map on face posets:
\[
\mathsf{sp}\colon \Gamma\to \Gamma_0. 
\]

\begin{definition}[Asymptotic $K$-weights]\label{def: asymptotic-weights}
Given a polyhedral complex $\Gamma$ with a weight function $w$ on the faces as above, the {\it asymptotic $K$-weights} on $\Gamma_0$ are defined by the following formula: for a face $G$ in $\Gamma_0$, define
\[
k_0(G) = \sum_{F\in \mathsf{sp}^{-1}(G)} (-1)^{\mathsf{dim}(F)-\mathsf{dim}(G)}\cdot k(F).
\]
In words, we sum the decorations of the faces of the same dimension as $G$ that specialize to it, subtract those of one dimension higher, add those of two higher, and so on. 
\end{definition}

Geometrically, the asymptotic $K$-weights arise in families. First note that if there is specialization of $(X,D)$ to an expansion $X_\Gamma$, then the recession fan of $\Gamma$ is necessarily the fan of $X$. The asymptotic weights control the numerical data in such specializations. Precisely, if there is a flat specialization of an algebraically transverse sheaf on $X$ to one on an expansion, the asymptotic $K$-weights of the special fiber compute the $K$-weights of the general fiber. The asymptotic $K$-weights are constant in flat families.

\subsection{$K$-balancing} The tropicalization of a sheaf with its $K$-weights is constrained by a balancing condition. It is using this balancing condition that we will prove our main combinatorial result -- the set of all tropicalizations of quotients of a given sheaf with fixed numerical data is finite. 

\subsubsection{Warm up -- the source of balancing}\label{sec: ordinary-balancing} We explain the basic geometry behind the balancing condition, without explicit formulae, before giving full details, starting with the most well-studied case~\cite{IntToricVar,MaclaganSturmfels,Mi03}.

Let $X$ be a smooth and projective toric variety and $Z\hookrightarrow X$ a closed subvariety of dimension $e$ that meets the strata of $X$ in expected dimension. Let $B^e(X)$ denote the set of strata of codimension $e$. Intersection with $Z$ gives a function
\begin{eqnarray*}
\cap Z\colon \mathsf B^e(X)&\to& \mathbb Z\\
W&\mapsto& \#(W\cap Z),
\end{eqnarray*}
where $\#$ denotes the length of the intersection. The balancing condition asserts that this function is constrained. Geometrically, the count $ \#(W\cap Z)$ is the degree of the intersection product in Chow: $\#(W\cap Z) = \int_X [W]\cdot [Z]$. If we extend $\cap Z$ linearly to the free abelian group $\mathsf Z^e(X)$ on $B^e(X)$, it descends to the Chow group:
\[
\mathsf{Z}^e(X)\twoheadrightarrow \mathsf{CH}^e(X).
\]
In particular, if $\sum a_i W_i$ is a cycle that represents $0$ in Chow (or even in homology, though this is equivalent) then we have a constraint:
\[
\sum a_i\cdot \#(Z\cap W_i) = 0.
\]
To obtain formulae from this, one notes that a basis of rational equivalences can be obtained by divisors associated to characters on dense tori of each stratum of $X$ of codimension $e-1$. 

We want to generalize replace $Z$ be an algebraically transverse coherent sheaf $\mathcal F$ and then generalize this in two ways:
\begin{enumerate}[(i)]
\item We would like to record the Euler characteristic (or similar invariants, such as the Hilbert polynomial) of intersections with strata.
\item We would like to replace the toric variety with a general snc pair. 
\end{enumerate}

For (i), the basic geometric idea is the same. We can record Euler characteristics of $\mathcal F$ with {\it all} strata of $X$. If  $\mathcal B(X)$ denotes the set of strata we consider:
\begin{eqnarray*}
\otimes \mathcal F\colon \mathcal B(X)&\to& \mathbb Z\\
W&\mapsto& \chi(X, \mathcal F \otimes \mathcal O_W)
\end{eqnarray*}
The line bundle associated to a character $\chi$ is trivial, and gives an equality of certain ideal sheaves. The associated equality of structure sheaves constrains the function.

For (ii), our strategy is to realize $(X,D)$ as a subvariety of a toric variety bundle over $X$, and use formulae for the $K$-theory ring of a toric bundle. Again, we note that we will eventually only use $K$-balancing in the toric setting, so a casual reader might be advised to  skip or skim this construction. 

\subsubsection{Embedding into a toric bundle} Let $D_1,\ldots, D_k$ be the divisor components of $D$, and let $(L_i,s_i)$ be the associated line bundle/section pair. The ideal sheaf of $D_i$ is $L_i ^\vee$. Consider the projective space bundle
\[
\mathbb P = \mathbb P_X\left(\bigoplus_i L_i\oplus \mathcal O\right)\to X.
\]
The space $\mathbb P$ has a natural simple normal crossings divisor, given by the fiberwise toric boundary, with no logarithmic structure coming from $X$ itself. 

The sections $s_i$ give rise to a section
\[
s\colon X\to \mathbb P
\]
The divisor $D\subset X$ is precisely the pullback $s^{-1}(\partial \mathbb P)$. Therefore the map
\[
s\colon (X,D)\hookrightarrow (\mathbb P,\partial \mathbb P)
\]
is strict\footnote{We warn the reader that there are two logarithmic structures in play on $X$ -- the trivial logarithmic structure which is the correct structure on the base of the bundle $\mathbb P$, and the snc divisor that we started with, which is the pullback of the fiberwise toric structure under the embedding $s$.}. 

The next lemma guarantees that we may as well replace $(X,D)$ with $(\mathbb P,\partial \mathbb P)$ for the purposes of boundedness.We maintain the notation and setup above. 

\begin{lemma}
Let $\mathcal E$ be a coherent sheaf on $X$ that is flat over $\mathsf A(X,D)$. The sheaf $s_\star\mathcal E$ is flat over $\mathsf A(\mathbb P,\partial \mathbb P)$. 
\end{lemma}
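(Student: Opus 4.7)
The plan is to check flatness of $s_\star\mathcal E$ over $\mathsf A(\mathbb P,\partial\mathbb P)$ at each geometric point of $\mathbb P$, since flatness is a local condition. Because $s$ is a closed immersion, $s_\star\mathcal E$ vanishes on the complement of $s(X)$, so flatness is automatic away from the image of $s$. The proof therefore reduces to transferring flatness from $\mathsf A(X,D)$ to $\mathsf A(\mathbb P,\partial\mathbb P)$ at points of the form $p=s(x)$, exploiting that $s$ is strict.

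To carry out this transfer, I work in a local trivialization of the projective bundle. Over an affine open $\Spec(R)\subset X$ on which the line bundles $L_i$ trivialize, with sections $f_i\in R$ cutting out $D_i$, the bundle $\mathbb P$ restricts to $\Spec(R)\times\mathbb P^k$. Using the tautological sections $\sigma_0,\sigma_1,\ldots,\sigma_k$, the section $s$ lies entirely in the affine chart $\sigma_0\neq 0$; in the coordinates $w_i=\sigma_i/\sigma_0$ the boundary $\partial\mathbb P$ is $\bigcup_i V(w_i)$ and $s$ is the closed immersion defined by $w_i\mapsto f_i$. In this chart the moduli map $\mathbb P\to\mathsf A(\mathbb P,\partial\mathbb P)$ factors through the open substack of $[\mathbb A^k/\mathbb G_m^k]$ parametrizing the $k$ boundary divisors and is given by $z_i\mapsto w_i$, so its composition with $s$ is $z_i\mapsto f_i$, which is precisely the moduli map of $(X,D)$. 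This identifies $\mathsf A(X,D)$ with an open substack of $\mathsf A(\mathbb P,\partial\mathbb P)$ near $s(X)$ via an étale, hence flat, morphism $\eta\colon\mathsf A(X,D)\to\mathsf A(\mathbb P,\partial\mathbb P)$; this identification is the content of ``strictness of $s$'' and is the only nontrivial point in the proof.

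Given the factorization produced by $\eta$, the final step is routine. At $p=s(x)$ the stalk $(s_\star\mathcal E)_p$ equals $\mathcal E_x$, and its module structure over the local ring of $\mathsf A(\mathbb P,\partial\mathbb P)$ at the image of $p$ factors through the local ring of $\mathsf A(X,D)$ at the image of $x$ via the flat morphism $\eta$. Since $\mathcal E_x$ is flat over the local ring of $\mathsf A(X,D)$ by hypothesis, transitivity of flatness through the flat map $\eta$ yields flatness of $(s_\star\mathcal E)_p$ over the local ring of $\mathsf A(\mathbb P,\partial\mathbb P)$, completing the proof. The expected main obstacle is the setup in the second paragraph, which hinges on checking that $\sigma_0$ never vanishes on $s(X)$ and therefore that only the $k$ boundary divisors relevant to $(X,D)$ contribute locally; the rest is essentially bookkeeping.
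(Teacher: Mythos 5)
Your proof is correct and is essentially a detailed unpacking of the paper's one-line argument: the paper observes that strictness of $s$ identifies $\mathsf A(X,D)$ with an open substack of $\mathsf A(\mathbb P,\partial\mathbb P)$, which is exactly the content of your local computation showing $\sigma_0$ is nonvanishing on $s(X)$ and that the composite $X\to\mathsf A(\mathbb P,\partial\mathbb P)$ factors through this open substack via the original moduli map of $(X,D)$. The final transfer of flatness through the flat (open immersion) map $\eta$ is the same step the paper leaves implicit.
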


\begin{proof}
Immediate from strictness, as $\mathsf A(X,D)$ is an open substack of $\mathsf A(\mathbb P,\partial \mathbb P)$. 
\end{proof}

\begin{lemma}\label{lem:CanPullBack}
Every expansion of $X$ along $D$, possibly after further expansion, is the pullback of an expansion of $\mathbb P$ along $\partial \mathbb P$, via the map $s$. 
\end{lemma}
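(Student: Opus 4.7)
The plan is to reduce the statement to a combinatorial extension problem for polyhedral subdivisions. Strictness of the embedding $s\colon (X,D) \hookrightarrow (\mathbb{P}, \partial \mathbb{P})$ identifies $\mathsf{A}(X,D)$ with an open substack of $\mathsf{A}(\mathbb{P}, \partial \mathbb{P})$, and consequently the cone complex $\Sigma(X,D)$ sits as a subcomplex of $\Sigma(\mathbb{P}, \partial\mathbb{P})$. Explicitly, $\Sigma(\mathbb{P}, \partial\mathbb{P})$ is the fan of the fiberwise $\mathbb{P}^k$, and $\Sigma(X,D)$ appears as a union of faces of the maximal cone dual to the affine chart containing $s(X)$; the $(k+1)$-st ray of $\Sigma(\mathbb{P}, \partial\mathbb{P})$, corresponding to the ``point at infinity'' in each fiber, is never met by $s$. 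Because the expansion $X_\Gamma \to X$ is the pullback of $\mathsf{A}(X,D)_\Gamma \to \mathsf{A}(X,D)$ along the smooth map $X \to \mathsf{A}(X,D)$, and analogously for $\mathbb{P}$, pulling back a $\mathbb{P}$-expansion along $s$ produces the $X$-expansion decorated by the restriction of the ambient polyhedral decomposition to $\Sigma(X,D) \times \mathbb{R}_{\geq 0}$.

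It therefore suffices to prove the combinatorial claim that for every integer polyhedral decomposition $\Gamma$ of $\Sigma(X,D) \times \mathbb{R}_{\geq 0}$, trivial over the zero slice, there exists --- possibly after refining $\Gamma$ --- an integer polyhedral decomposition $\widetilde{\Gamma}$ of $\Sigma(\mathbb{P}, \partial\mathbb{P}) \times \mathbb{R}_{\geq 0}$ whose restriction to $\Sigma(X,D) \times \mathbb{R}_{\geq 0}$ is $\Gamma$. The refinement is permitted by the ``further expansion'' clause of the lemma.

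To establish the combinatorial claim, I would first invoke toric resolution of singularities~\cite{KKMSD} to refine $\Gamma$ by an iterated sequence of stellar subdivisions at integer points of $\Sigma(X,D) \times \mathbb{R}_{\geq 0}$. I would then perform exactly the same sequence of stellar subdivisions in $\Sigma(\mathbb{P}, \partial\mathbb{P}) \times \mathbb{R}_{\geq 0}$ using the same integer centers, which are also integer points of the ambient complex. The key observation is that stellar subdivision of the ambient complex at a point of the subcomplex restricts on the subcomplex to the corresponding stellar subdivision, because a cone of $\Sigma(X,D)$ contains the center if and only if it does in $\Sigma(\mathbb{P}, \partial\mathbb{P})$. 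Iterating yields the desired $\widetilde{\Gamma}$; any residual non-reducedness of $X_{\widetilde{\Gamma}}$ is absorbed by the dilation procedure recalled in the preliminaries, which is a further expansion on both $X$ and $\mathbb{P}$ simultaneously.

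The principal obstacle I anticipate is not conceptual but rather the careful bookkeeping required to confirm that the combinatorial extension above corresponds geometrically to a pullback of expansions. This reduces, via the pullback description of expansions from their Artin fans, to the stability of strict open immersions of Artin fans under the expansion construction --- a functoriality property implicit in~\cite{Logquot} --- together with the observation that the argument respects the product structure $\Sigma \times \mathbb{R}_{\geq 0}$ since the stellar subdivisions are performed away from the zero slice.
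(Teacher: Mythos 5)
Your proposal is correct and follows essentially the same route as the paper: reduce to extending a polyhedral decomposition of $\Sigma(X,D)\times\mathbb R_{\geq 0}$ to one of $\Sigma(\mathbb P,\partial\mathbb P)\times\mathbb R_{\geq 0}$, refine by a cofinal sequence of stellar subdivisions, and extend each stellar subdivision from the subcomplex to the ambient complex. The paper's proof is a terser version of exactly this argument; your additional remarks on strictness, pullbacks of expansions, and dilation just spell out details the paper leaves implicit.
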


\begin{proof}
An expansion of $X$ is given by a polyhedral decomposition of $\Sigma(X,D)$. This is a face of $\Sigma(\mathbb P,\partial \mathbb P)$. We can certainly choose a polyhedral decomposition of $\Sigma(\mathbb P,\partial \mathbb P)$ that refines the given one on $\Sigma(X,D)$. One way to see this is to express the subdivision of the face as a sequence of stellar subdivisions. These extend by elementary considerations. 
\end{proof}

\begin{remark}
    The phrase \textit{possibly after expansion} in Lemma \ref{lem:CanPullBack} is not necessary. We omit a proof of this stronger statement as we will not need it, and the proof is more finicky.
\end{remark}

If $\mathcal F$ is a transverse sheaf on an expansion $X_\Gamma$ its pushforward $s_\star\mathcal F$ is transverse on $\mathbb P_\Gamma$, the $K$-weights of the tropicalization of $\mathcal F$ on $X_\Gamma$ coincide with those of the pushforward $s_\star\mathcal F$ with respect to the pullback of $L$ to $\mathbb P$. So for the purposes of boundedness, we can replace $(X,D)$ with $(\mathbb P,\partial \mathbb P)$.

\subsubsection{Warming up again -- cycle theory in toric bundles} To explain how the $L_i$ affect balancing, we first explain balancing at the level of cycles, before moving on to sheaves. For ease of exposition we focus on balancing for tropicalization associated to curves embedded in bundles of arbritary dimension. The material here is essentially lifted from~\cite{CMN,Dod24}. 

Let $p\colon \mathbb P\to X$ be as above and let $Z\hookrightarrow \mathbb P$ be a curve, meeting the fiberwise toric boundary in expected dimension. We obtain a function on the strata:
\begin{eqnarray*}
B^1(\mathbb P)&\to&\mathbb Z\\
D&\mapsto& \#(D\cap Z),
\end{eqnarray*}
where $\#$ denotes the length of the intersection. Balancing is a constraint on this function. 

More precisely, let $\beta$ be $p_\star[Z]$. Let $u$ be an element of the character lattice $M$ of the fiber torus. By standard toric geometry, we obtain a divisor
\[
\mathsf{div}(u) = \sum_{\rho} \langle u,v_\rho\rangle E_\rho
\]
where $\rho$ ranges over the rays of the fan of the fiber torus and $v_\rho$ is a primitive generator for the ray. If the line bundle factors $L_i$ were all trivial, then $\mathsf{div}(u)$ would be the trivial divisor. 

In general, the divisor $\mathsf{div}(u)$ is the pullback of a line bundle $L_i$ on $X$. For an appropriate basis $u_1,\ldots,u_k$, we can arrange for
\[
\mathsf{div}(u_i) \cong p^\star L_i
\]
and the $L_u$ are determined by linearity. We must have the modified balancing condition:
\[
\sum_{\rho} \langle u,v_\rho\rangle \#(E_\rho\cap Z) = \beta\cdot c_1(L_u). 
\]

\begin{example}
As a concrete example, let $X = \mathbb P^1$ and $\mathbb P$ be the projective line bundle
\[
\mathbb P(\mathcal O(f)\oplus\mathcal O)\to \mathbb P^1. 
\]
The divisor is the union of $E_0$ and $E_\infty$ be the $0$ and $\infty$ sections of the bundle. 

Let $Z\subset \mathbb P$ be a curve. The transversality condition ensures that $Z$ meets the sections in dimension $0$. The class $\beta$ is a positive integer, which counts the intersection number of $Z$ with a fiber. Let $n_0$ and $n_\infty$ be the respective intersection numbers. We have the relation
\[
[E_0]-[E_\infty] = \pi^\star c_1(\mathcal O(f)).
\] 

\end{example}

We explain via the next example how the picture above changes for subvarieties of dimension $2$. 

\begin{example} Consider $\pi \colon \mathbb P\to X$ a $\mathbb P^1$-bundle with associated line bundle $L$, and $Z\hookrightarrow X$ a subvariety of dimension $2$. Let $E_0$ and $E_\infty$ be the $0$ and $\infty$ sections of the bundle. Then we have
\[
[E_0]-[E_\infty] = \pi^\star c_1(L).
\] 
Let $H$ be a generic hyperplane section of $X$. Capping the curves $Z\cap E_0$ and $Z\cap E_\infty$ with $\pi^\star H$ we obtain $0$-dimensional schemes with lengths $n_0$ and $n_\infty$ respectively. The modified balancing condition reads
\[
n_0-n_\infty = \beta\cdot f,
\]
while ordinary balancing would assert $n_0-n_\infty$ is $0$. 
\end{example}
Note the balancing condition depends on $f$, and thus the bundle. Fixing $\beta$, when the bundle is trivial, i.e. $f = 0$, we see that $n_0$ and $n_\infty$ are the same and we obtain traditional balancing. As $f$ changes, the difference between $n_0$ and $n_\infty$ changes, but in a manner that is completely dictated by $\beta$ and the Chern class of the bundle. If $\beta$ is $0$, i.e. if $Z$ is a fiber class, then again we have traditional balancing. 

\subsubsection{$K$-theory in toric bundles} We formulate the $K$-theory balancing using results of Sankaran--Uma~\cite{SU03} that describe, among many other things, the $K$-theory of toric fiber bundles. 

We alter the notation and allow $\mathbb P$ to be {\it any} equivariant smooth projective compactification of the torus bundle 
\[
\bigoplus_i L_i^\star \to X
\]
associated to a fan $\Sigma$. Let $N$ denote the fiber cocharacter lattice. 

The splitting $\bigoplus L_i$ gives a natural isomorphism $N\cong\mathbb Z^k$. If $u\in M$ is an element of the dual, we associate a line bundle
\[
L_u = \bigotimes_i L_i^{u_i}
\]
where $u_i$ is the $i$-component of $u$. Finally, let $\Sigma$ be the fan of the fiber. We use $\rho_i$ for the rays and $v_i$ for their primitive generators. We use $E_{\rho_i}$ the divisor associated to the ray $\rho_i$. The $K$-theory of vector bundles on $\mathbb P$ and the $K$-theory of coherent sheaves are naturally isomorphic, since $\mathbb P$ is assumed to be smooth and projective. It is given as follows. 

\begin{theorem}[Sankaran--Uma]
Let $\mathbb P\to X$ be as above. The Grothendieck $K$-ring of $\mathbb P$ is isomorphic to a quotient of 
\[
K(X)[x_1,\ldots,x_k]
\]
subject to the relations:
\[
\prod_{i\in I} x_i = 0, \ \ \textnormal{if the rays $\rho_i$ for $i\in I$ do not form a cone},
\]
and for each $u\in M$
\[
\prod_{i|\langle v_i,u\rangle>0} (1-x_i)^{\langle v_i,u\rangle} = [L_u]\cdot \prod_{j|\langle v_j,u\rangle<0} (1-x_i)^{-\langle v_j,u\rangle}.
\]
Under the isomorphism, $x_i$ is mapped to the structure sheaf $\mathcal O_{E_i}$, so $(1-x_i)$ is the ideal sheaf $\mathcal O_{\mathbb P}(-E_i)$. 
\end{theorem}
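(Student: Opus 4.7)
The proposal is to define the ring homomorphism
\[
\phi \colon K(X)[x_1,\dots,x_k] \to K(\mathbb P), \qquad x_i \mapsto [\mathcal O_{E_i}],
\]
check that both families of relations lie in $\ker\phi$, and then show that the induced map from the quotient is an isomorphism by identifying both sides as free $K(X)$-modules of the same rank with matching bases indexed by the maximal cones of $\Sigma$.

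First I would verify that the relations hold. If the rays $\{\rho_i\}_{i\in I}$ do not span a cone of $\Sigma$, then fiberwise $\bigcap_{i\in I} E_i = \emptyset$, hence also in $\mathbb P$. Because the boundary divisors of a smooth toric bundle meet transversely, the Koszul resolution gives $\prod_{i\in I}[\mathcal O_{E_i}] = [\mathcal O_{\cap_i E_i}] = 0$. For the character relation, each $u \in M$ defines a toric divisor $\mathrm{div}(u) = \sum_i \langle v_i,u\rangle E_i$ on $\mathbb P$; the construction of the toric bundle identifies $\mathcal O_{\mathbb P}(-\mathrm{div}(u)) \cong p^\star L_u$ in $\mathrm{Pic}(\mathbb P)$, as $L_u$ records the failure of $\mathrm{div}(u)$ to be linearly trivial fiberwise. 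Converting via $[\mathcal O_{\mathbb P}(-E_i)] = 1-x_i$ and separating positive and negative contributions of $\langle v_i,u\rangle$ yields the displayed relation.

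For surjectivity I would stratify $\mathbb P$ by the fiberwise toric orbits. The open stratum $T \subset \mathbb P$ is a torus bundle over $X$, whose $K$-theory coincides with $p^\star K(X)$ because an algebraic torus has trivial $K$-theory. Using the localization sequence
\[
K\bigl(\textstyle\bigcup_i E_i\bigr) \to K(\mathbb P) \to K(T) \to 0
\]
and observing that each $E_i$ is itself a smooth projective toric bundle over $X$ of strictly smaller fiber dimension, induction on the fiber dimension shows that the classes $[\mathcal O_{E_I}]$ indexed by cones $\sigma_I \in \Sigma$, together with pullbacks from $X$, generate $K(\mathbb P)$ as a $K(X)$-algebra.

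For injectivity the fiber toric variety admits a Bia\l{}ynicki--Birula cell decomposition indexed by maximal cones of $\Sigma$, and $\mathbb P\to X$ is Zariski-locally trivial with respect to it. A Leray--Hirsch type argument then realizes $K(\mathbb P)$ as a free $K(X)$-module on the distinguished basis $\{[\mathcal O_{E_I}]\}_{I\text{ maximal}}$. The presented ring is also free of the same rank on the same basis: the monomial relations collapse products over non-face index sets, and the character relations exactly encode the $K(X)$-linear dependences among the remaining monomials that arise from the twists $L_u$. The main obstacle is this last bookkeeping -- showing the listed relations actually suffice. The cleanest route, following Sankaran--Uma, is to reduce to a point via base change, compare with the classical $K$-theory presentation of $\overline T$, and observe that all $K(X)$-dependence is absorbed into the $[L_u]$ factors so no further relations can appear.
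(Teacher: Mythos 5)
This statement is not proved in the paper at all: it is quoted verbatim as a result of Sankaran--Uma~\cite{SU03} and used as a black box, so there is no in-paper argument to compare against. Your outline does follow the standard strategy for such results (check the relations, prove surjectivity by localization, prove injectivity by a Leray--Hirsch freeness argument over $K(X)$), and the verification of the two families of relations is essentially correct: the monomial relations come from emptiness of the intersections $\bigcap_{i\in I}E_i$ together with transversality, and the character relations from the identification $\mathcal O_{\mathbb P}(-\mathsf{div}(u))\cong p^\star L_u$.

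There are, however, two genuine problems in the second half. First, the classes $\{[\mathcal O_{E_I}]\}_{I\ \mathrm{maximal}}$ are \emph{not} a $K(X)$-basis. Already for the fiber $\mathbb P^1$ the two torus-fixed points are rationally equivalent, so $[\mathcal O_{E_0}]=[\mathcal O_{E_\infty}]$ and these two classes span a rank-one subgroup of the rank-two group $K_0(\mathbb P^1)$; a correct basis is $\{1,[\mathcal O_{E_0}]\}$. In general the Bia\l{}ynicki--Birula/shelling argument produces a basis of monomials $\prod_{i\in I_\sigma}x_i$ where $I_\sigma$ is a proper subset of the rays of the maximal cone $\sigma$ determined by a shelling order, not the full set of rays of $\sigma$. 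Second, the step you yourself flag as ``the main obstacle'' --- that the listed relations suffice, i.e.\ that the presented ring is a free $K(X)$-module of rank equal to the number of maximal cones --- is the actual content of the theorem, and ``reduce to a point and observe that all $K(X)$-dependence is absorbed into the $[L_u]$'' is not an argument. One must use the relations to rewrite an arbitrary monomial in the $x_i$ as a $K(X)$-combination of the distinguished basis monomials (this is where the $[L_u]$ enter, and where the bundle case genuinely differs from the toric case), and then match ranks against the Leray--Hirsch computation. Without that rewriting procedure the injectivity claim is unsupported. The surjectivity paragraph is fine in outline, though you should say only that $p^\star\colon K(X)\to K_0(T)$ is \emph{surjective} (it is a quotient by the ideal generated by the $1-[L_i]$), not that the two coincide.
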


The toric case plays a special role. In this case, the ring is obtained by replacing $K(X)$ with $\mathbb Z$ and setting all the $L_u$ to $1$. In other words, the $K$-groups are spanned by structure sheaves of torus invariant subvarieties. The key relation for us is the second one, which states that the divisor of a character defines $1$ in $K$-theory. 

\subsubsection{Balancing for coherent sheaves} We formulate general $K$-balancing for sheaves on toric bundles $p\colon \mathbb P\to X$. A reader only interested in the toric case can skip to the next section without loss of continuity. 

Let $\mathcal F$ be an algebraically transverse sheaf on a toric bundle $\mathbb P$ with fiber fan $\Sigma$. Recall we have fixed line bundles $L_0,\ldots, L_d$ on the base $X$ of the projective bundle, where $L_0$ is a reference ample on $X$. 

The $K$-weights are multivariable polynomials that record, for each cone $\sigma$ in $\Sigma$ the Hilbert polynomial
\[
\sigma\mapsto P(\mathcal F|_{W_\sigma},\underline L,\underline t).
\]
By construction, this polynomial records the Euler characteristic of every twist by the $L_i$'s of every restriction to $\mathcal F$ to a stratum. 

Our goal is to explain constraints on the possible polynomial decorations that could arise in this way. Fix a {\it polynomial decoration} given by any function
\[
\Sigma\to \mathbb Z[\underline t].
\]
Any $K$-theory class on $\mathbb P$ gives a decoration of this kind. Indeed, if $\alpha$ is such a class, then we can pull back to any stratum $W_\sigma$, push forward to $X$, and then pass to multigraded Hilbert polynomial with respect to the system $\underline L$. We refer to this as the {\it the associated polynomial decoration} of $\alpha$. 

\begin{definition}
A polynomial decoration
\[
\Sigma\to \mathbb Z[\underline t].
\]
{\it satisfies the $K$-balancing condition} if it arises as the associated polynomial decoration of a $K$-theory class $\alpha\in K(\mathbb P)$. 
\end{definition}

\subsubsection{$K$-balancing as a sequence of tests} The statement above heavily constrains the possible polynomial decorations, but is somewhat inexplicit, and necessarily more subtle than traditional balancing. We now unpack this to something more checkable. 

The polynomial decoration is a function associated to reduced and irreducible closed strata of $\mathbb P$. Choose a basis $e_1,\ldots,e_d$ for the character lattice $M$ of the fiber torus, such that
\[
[\mathsf{div}(e_i)] = [p^\star \mathcal O_X(D_i)]. 
\]
Correspondingly for $u\in M$, we see that $\mathsf{div}(u)$ is equal to the pullback of a tensor power $\bigotimes_i \mathcal O_X(a_iD_i)$. 

Recall that $\rho$ denotes the set of rays in $\Sigma$, the primitive generators are $v_\rho$, and we use $\mathcal O(-\rho)$ for the ideal sheaf of $E_\rho$. The relation determined by $u$ is given by:
\[
\prod_{\rho|\langle v_\rho,u\rangle >0} \mathcal O(-\rho)^{\langle v_\rho,u\rangle} = p^\star L_u\cdot \prod_{\rho|\langle v_\rho,u\rangle <0} \mathcal O(-\rho)^{-\langle v_\rho,u\rangle}
\]
We can rewrite $L_u$ as a tensor product of ideal sheaves and their duals, and then subtracting both sides from $1$ we obtain a relation:
\[
1-p^\star L_u^{+}\cdot \prod_{\rho|\langle v_\rho,u\rangle >0} \mathcal O(-\rho)^{\langle v_\rho,u\rangle} = 1-p^\star L^{-}_u\cdot \prod_{\rho|\langle v_\rho,u\rangle <0} \mathcal O(-\rho)^{-\langle v_\rho,u\rangle}
\]
Subtracting both sides from $1$, we obtain an equality of structure sheaves of a union of divisors in $\mathbb P$ with regular crossings. 

Given a polynomial decoration
\[
\Sigma\to \mathbb Z[\underline t].
\]
we can {\it formally} compute the left and right hand sides of the relation above. Precisely, the class
\[
1-p^\star L_u^{+}\cdot \prod_{\rho|\langle v_\rho,u\rangle >0} \mathcal O(-\rho)^{\langle v_\rho,u\rangle}
\]
is represented by a divisor with multiple, possibly non-reduced, components with regular crossings -- the divisors $E_\rho$ appear with multiplcity given by $\langle v_\rho,u\rangle$ and the components $D_i\subset X$ appear with multiplicity as given by the entries in $u$. The class can therefore be computed by inclusion exclusion of non-reduced complete intersection strata in $\mathbb P$, and intersections of these with pullbacks of $a_iD_i\subset X$. 

By using the discussion of multiplicities in Section~\ref{sec: toolbox}, the polynomial decorations associated to these non-reduced strata are determined formally. We can similarly compute the right hand side. For each $u$, this gives a test for balancing. 

Note that the calculation of the decorations on these non-reduced strata requires a choice. By Section~\ref{sec: toolbox}, this involves computing the decoration associated to derived self-intersections of divisors $E_\rho$. By toric geometry, these self-intersections can be expressed as higher codimension strata of $\mathbb P$, with possible corrections coming from pullbacks from the base -- the choice of higher codimension strata is the choice. The fact that different choices give the same answer is itself a test of balancing!

The balancing condition must be tested for all, or equivalently a basis, of elements of $M$. Furthermore, for each cone in $\Sigma$, we can pass to the star and again check the balancing condition. 

All these tests, together, constitute the $K$-balancing condition. 

We will not use the actual formulae, as our main proof will reduce to the toric case, so we end this discussion here. 

\subsubsection{The toric specialization} We restate balancing in the toric case. Let $(Y,E)$ be a toric variety with character lattice $M$. Given a transverse sheaf $\mathcal F$ on $Y$, consider the decoration:
\[
\sigma\mapsto \chi(Y,\mathcal O_{W_\sigma}\otimes \mathcal F).
\]
Since $\mathcal F$ is transverse, this Euler characteristic is obtained by taking the product $[\mathcal O_{W_\sigma}]\cdot[\mathcal F]$ in the $K$-theory of $Y$, and pushing forward to a point. Exactly as in the Chow section explained in Section~\ref{sec: ordinary-balancing}, because this can be computed in $K$-theory, there are constraints on the function. 

Consider an integer-valued decoration
\[
\Sigma\to \mathbb Z.
\]
Given a $K$-theory class $\alpha$ on $Y$, we obtain, {\it the associated decoration of $\alpha$} a decoration by
\[
\sigma\mapsto \chi(Y,\alpha\cdot[\mathcal O_{W_\sigma}]). 
\]

\begin{definition}
An integer decoration
\[
\Sigma\to \mathbb Z.
\]
{\it satisfies the $K$-balancing condition} if it arises as the associated decoration of a $K$-theory class $\alpha\in K(Y)$. 
\end{definition}

Specializing the discussion in the previous section, practically we check this as follows. For each linear function $u\in M$, we obtain an equality of $K$-theory classes, in fact of structure sheaves of divisors
\[
1-\prod_{\rho|\langle v_\rho,u\rangle>0} \mathcal O(-\rho)^{\langle v_\rho,u\rangle} = 1-\prod_{\rho|\langle v_\rho,u\rangle<0} \mathcal O(-\rho)^{-\langle v_\rho,u\rangle}.
\]
By the discussion in Section~\ref{sec: toolbox}, there is a universal formula expressing this in terms of $K$-theory classes of strata. 

To test balancing around the origin in $\Sigma$, given a decoration $\Sigma\to \mathbb Z$, for each $u$, we check that the left and right hand sides of the relation above are satisfied. We similarly check the analogous relation around every cone in $\Sigma$ by passing to the star. 

\subsection{Manipulating toric $K$-balancing} The material in this section concerns the {\it toric} $K$-balancing condition, rather than the general case, as this will suffice for our boundedness argument. 

The discussion in the previous section provides a condition for a fan $\Sigma$ with a decoration
\[
k\colon \Sigma\to \mathbb Z
\]
to be {\it $K$-balanced}. Given an algebraically transverse sheaf $\mathcal F$ on an expansion $Y_\Gamma$ of a toric pair $(Y,E)$, we obtain a weight function on the polyhedral complex $\Gamma$:
\[
k\colon \Gamma\to \mathbb Z
\]
by restricting $\mathcal F$ to each corresponding closed stratum of $Y_\Gamma$ and taking Euler characteristic. 

\begin{definition}
A polyhedral complex $\Gamma$ with a decoration
\[
k\colon \Gamma\to\mathbb Z
\]
is {\it $K$-balanced} if the induced decoration on the star fan of every face is $K$-balanced. 
\end{definition}

The $K$-balancing condition, even in the toric case, has high combinatorial complexity. We extract some checkable consequences in the next two sections that capture the finiteness we need. First, we construct a filtration on a $K$-balanced polyhedral complex that, in some sense, mirrors the torsion filtration on a coherent sheaf on a scheme. Next, we explain how the constraint behaves under linear projections. 

\subsubsection{The tropical dimension filtration} Fix a polyhedral complex $\Gamma$ and an integer decoration
\[
k\colon \Gamma\to \mathbb Z
\]
that satisfies the $K$-balancing condition. We say the {\it dimension} of $(\Gamma,k)$ is the maximal dimension of a cell on which $k$ takes a nonzero value. 

The balancing condition is a condition on the star of any face of $\Gamma$. The condition is trivial about faces of dimension larger than or equal to $\dim (\Gamma,k)$. The first nontrivial balancing condition occurs in codimension $1$. The star of a codimension $1$ cell is a tropical curve, and here the condition is the traditional balancing condition. 

\begin{definition}[Pullback under subdivision]
Let $\Gamma'\to \Gamma$ be a subdivision. The {\it pullback $k'$ of the weight function $k$} is defined by the composite
\[
\Gamma'\to\Gamma\xrightarrow{k}\mathbb Z.
\]
\end{definition}

The pullback weight is geometrically meaningful. 

\begin{lemma}
If $\mathcal F$ is an algebraically transverse sheaf on an expansion $Y_\Gamma$ of $Y$, and determines a $K$-weight function $k$ on $\Gamma$, if $Y'\to Y$ is a modification associated to a subdivision $p\colon \Gamma'\to\Gamma$, the decoration determined by $p^\star\mathcal F$ is the pullback of $k$. 
\end{lemma}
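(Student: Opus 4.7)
The identity to prove is $k'(\gamma') = k(p(\gamma'))$ for every cell $\gamma' \in \Gamma'$, where $k'$ and $k$ record the $K$-weights of $p^\star \mathcal F$ and $\mathcal F$ on the respective stratifications. I would verify this cell-by-cell. For $\gamma' \in \Gamma'$ with image $\gamma = p(\gamma')$, algebraic transversality of $\mathcal F$ guarantees flat base change, so $(p^\star\mathcal F)|_{Y'_{\gamma'}} = p_{\gamma'}^\star(\mathcal F|_{Y_\gamma})$, where $p_{\gamma'}\colon Y'_{\gamma'} \to Y_\gamma$ is the induced stratum map. The identity then reduces to $\chi(Y'_{\gamma'}, p_{\gamma'}^\star(\mathcal F|_{Y_\gamma})) = \chi(Y_\gamma, \mathcal F|_{Y_\gamma})$; the multivariable Hilbert polynomial case needed outside the toric setting reduces to this statement by tensoring both sides with pullbacks of the $L_i$ from the base.

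Next, I would factor $\Gamma' \to \Gamma$ as a sequence of stellar subdivisions, which are cofinal among subdivisions of rational fans. Algebraic transversality is preserved under each step (as recalled in Section~\ref{sec: sheaves-on-expansions}), so the induction closes and it suffices to treat a single stellar subdivision at a ray $\rho$ introduced in a cone $\sigma \in \Gamma$. The cells $\gamma' \in \Gamma'$ then split into three classes. In the first, $\gamma'$ is unaffected by the subdivision and $p_{\gamma'}$ is the identity. In the second, $\gamma' = \gamma$ is an old cell whose closure contains $\sigma$; then $p_{\gamma'}$ is a blowup of $Y_\gamma$ at a smooth stratum, and Lemma~\ref{lem: blowup-formula} applies directly. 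In the third, $\gamma'$ is a newly introduced cell containing $\rho$ whose image is $\sigma$; then $p_{\gamma'}\colon Y'_{\gamma'} \to Y_\sigma$ is a projective space bundle, possibly composed with further stellar blowups in the fiber direction.

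The first class is trivial, the second is settled by Lemma~\ref{lem: blowup-formula}, and the third is handled by combining the projective bundle lemma (stated just before Lemma~\ref{lem: blowup-formula}) with Lemma~\ref{lem: blowup-formula}. The main obstacle is identifying the projective bundle structure in the third class. Concretely, the fiber of $p_{\gamma'}$ over a fixed point of $Y_\sigma$ corresponds to the quotient of the star of $\gamma'$ in $\Gamma'$ by the linear span of $\sigma$, which in the smooth (unimodular) setting is the fan of a projective space. Once this identification is in place, $p_{\gamma'}^\star(\mathcal F|_{Y_\sigma})$ is literally a pullback from the base, and the projective bundle lemma yields the required Euler characteristic invariance.
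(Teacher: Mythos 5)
Your proof is correct in substance and is essentially the paper's argument: the paper's proof of this lemma is a one--line deferral to Lemma~\ref{lem: blowup-formula}, and your fleshing--out --- restrict cell by cell, identify the induced stratum maps $Y'_{\gamma'}\to Y_\gamma$ as isomorphisms, blowups at strata, or projective bundles, and invoke the projective--bundle lemma together with Lemma~\ref{lem: blowup-formula}, using that transversality persists at each stage --- is exactly what is intended. The one step to phrase more carefully is ``factor $\Gamma'\to\Gamma$ as a sequence of stellar subdivisions'': stellar subdivisions are \emph{cofinal} among subdivisions, but an arbitrary smooth subdivision need not itself factor into stellars (that is the open strong factorization problem), so to cover the general case you should either pass to a further refinement $\Gamma''\to\Gamma'$ that is stellar over $\Gamma$ and add a short argument descending the computation back to $\Gamma'$, or argue directly that each stratum map $q\colon Y'_{\gamma'}\to Y_\gamma$ satisfies $Rq_\star\mathcal O_{Y'_{\gamma'}}=\mathcal O_{Y_\gamma}$ and apply the projection formula, with transversality guaranteeing that the underived pullback computes the correct $K$--theory class.
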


\begin{proof}
Essentially identical to Lemma~\ref{lem: blowup-formula}.
\end{proof}

The notion of pullback allows for a natural ``coarsening'' of a fan with $K$-weights. For this it is natural to work in the category of {\it piecewise linear spaces} from~\cite{Logquot}, recalled in Section~\ref{sec: pl-G-strat}. For us, this is simply a stratification of $N_{\mathbb R}$ with linear strata that admits a refinement by a fan. 

Piecewise linear spaces can be equipped with integer-valued weight functions. An {\it integer decorated} piecewise linear space is a piecewise linear space equipped with an integer function on the set of strata. It is {\it $K$-weighted} if some fan structure refining it, equipped with the pullback weights, satisfies the $K$-balancing condition. Given two $K$-weighted piecewise linear spaces stratifying $N_\RR$, a morphism between them is a refinement of stratifications equipped with the pullback weight. 

\begin{definition}
The {\it canonical coarsening of $(\Gamma,k)$} is the terminal object in the category of $K$-weighted piecewise linear subdivisions of $N_\RR$, with morphisms given above. A $K$-weighted piecewise linear space is {\it minimal} if it is equal to its own canonical coarsening. 
\end{definition} 

\begin{lemma}
Canonical coarsenings exist.
\end{lemma}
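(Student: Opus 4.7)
The plan is to show that the poset $\mathcal{C}$ of $K$-weighted piecewise linear coarsenings of $(\Gamma,k)$, ordered by refinement, has a unique maximum. Since $\Gamma$ has finitely many cells, any coarsening is determined by a partition of these cells together with a compatible linear structure, so $\mathcal{C}$ is a finite poset. It therefore suffices to show that $\mathcal{C}$ is directed; combined with finiteness, this yields a terminal object.

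Given two $K$-weighted coarsenings $(\Gamma_1, k_1)$ and $(\Gamma_2, k_2)$ of $(\Gamma,k)$, I would construct their join $(\Gamma_3, k_3)$ as follows. Let $\sim$ be the equivalence relation on open cells of $\Gamma$ generated by declaring $\sigma^\circ \sim \tau^\circ$ whenever $\sigma$ and $\tau$ lie in a common stratum of either $\Gamma_1$ or $\Gamma_2$. The strata of $\Gamma_3$ are the resulting equivalence classes as subsets of $N_{\mathbb{R}}$. The weight $k_3$ is well-defined by pullback, since equivalent cells inherit equal weights from $k_1$ or $k_2$ and hence from the pullback relation to $k$.

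The main obstacle is verifying that $\Gamma_3$ is itself a piecewise linear space, i.e., that each equivalence class is the relative interior of a polyhedron in a common linear subspace and admits a refinement by a fan. This requires a linear-algebraic compatibility: if $\sigma \sim \tau$ via $\Gamma_1$ inside a stratum with linear span $L_1$, and $\tau \sim \rho$ via $\Gamma_2$ inside a stratum with linear span $L_2$, the concatenated class must lie inside a single polyhedral region with well-defined linear span. Without extra input, one can cook up combinatorial examples where such a join fails to be piecewise linear (for instance, two orthogonal bisections of $\mathbb{R}^2$ into half-planes, whose join equivalence identifies all four quadrants). The $K$-balancing condition is what rules out this pathology: at each codimension-one face of $\Gamma$ that would be absorbed in the join, the balancing relations force the ambient linear spans on either side to be compatible. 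Controlling this obstruction is the principal technical task of the proof, and I expect it to rely on propagating the balancing identities around the star fan of each absorbed face to constrain which cells may be merged.

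Once $\Gamma_3$ is piecewise linear, $K$-balancing of $k_3$ follows from the locality of the condition: the star fan of $\Gamma_3$ at any face is a common coarsening of the corresponding star fans of $\Gamma_1$ and $\Gamma_2$, both of which satisfy $K$-balancing, and the original $K$-balanced $\Gamma$ refines all three so provides a consistent pullback witness. Terminality of the resulting maximum element is immediate: the equivalence relation defining any $(\Gamma',k') \in \mathcal{C}$ is contained in the union of all such equivalence relations, hence $\Gamma'$ refines $\Gamma^\flat$, and the refinement is unique because the weight is determined by pullback.
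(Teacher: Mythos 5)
Your proposal does not close the argument: the step you yourself flag as ``the principal technical task'' --- showing that the join of two coarsenings is again a piecewise linear space --- is left as an expectation (``I expect it to rely on propagating the balancing identities\ldots'') rather than proved. As written, the proof of existence therefore has a genuine gap at its central point, and the appeal to $K$-balancing there is speculative; nothing in your sketch extracts a concrete constraint from the balancing relations that would rule out the pathology you describe.

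The paper avoids the join construction entirely by writing down the terminal object directly: it is the level-set stratification of the weight function, i.e.\ the stratification $\mathcal P$ of $N_{\mathbb R}$ in which a point $p$ lies in the stratum indexed by $x$ exactly when the open cell of $\Gamma$ containing $p$ has weight $x$. Terminality is then essentially tautological: any $K$-weighted coarsening $(\Gamma',k')$ of $(\Gamma,k)$ carries the pullback weight, so $k$ is constant on each stratum of $\Gamma'$, hence each such stratum is contained in a level set of $k$ and $\Gamma'$ refines $\mathcal P$. The only nontrivial input is that $\mathcal P$ is a piecewise linear space, which the paper cites from \cite[Proposition 1.2.5]{Logquot}; the relevant notion is more permissive than your proposal assumes --- see Figure~\ref{fig: canonical-coarsening}, where a stratum can be the complement of a point inside a two-dimensional cell --- so any stratification obtained by amalgamating cells of $\Gamma$ qualifies, and no balancing argument is needed. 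I would recommend either adopting this direct construction, or, if you wish to keep the finite-directed-poset strategy, proving (rather than conjecturing) that joins exist in the category; under the paper's definition of piecewise linear space the latter reduces to the same cited proposition, and your worry about incompatible linear spans does not arise.
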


\begin{proof}
Let $(\Gamma,k)$ be a weighted piecewise linear space. We define a locally closed stratification $\mathcal{P}$ of $N_\mathbb{R}$ indexed by the range of $k$. A point $p$ lies in stratum with index $x$ if $p$ lies in the interior of a cone $\sigma$ for which $k(\sigma) = x$. Such a locally closed stratification defines a piecewise linear subdivision of $N_\mathbb{R}$ with the required properties by \cite[Proposition 1.2.5]{Logquot}.
\end{proof}

\begin{example}
We might visualize the canonical coarsening as in Figure~\ref{fig: canonical-coarsening}. In the category of piecewise linear spaces, one can ``glob together'' cells across which the $K$-weights do not change. In the figure, the labels indicate the $K$-weights. In the right hand side figure, the interior of the triangle has two strata -- the complement of the point labelled with $7$, and the rest of the interior. The edges and vertices form the remaining vertices. 

\begin{figure}[h!]
\begin{tikzpicture}[scale=1]

\coordinate (A1) at (0,0);
\coordinate (B1) at (6,0);
\coordinate (C1) at (0,6);
\coordinate (M1) at ($(A1)!1/3!(B1)!1/3!(C1)$);

\coordinate (A2) at (10,0);
\coordinate (B2) at (16,0);
\coordinate (C2) at (10,6);
\coordinate (M2) at ($(A2)!1/3!(B2)!1/3!(C2)$);


\draw[thick, black] (A1) -- (B1) node[midway, below, black] {\textbf{5}};
\draw[thick, black] (B1) -- (C1) node[midway, right, black] {\textbf{5}};
\draw[thick, black] (C1) -- (A1) node[midway, left, black] {\textbf{5}};
\draw[thick, black, dashed] (M1) -- (A1) node[midway, left, black] {\textbf{2}};
\draw[thick, black, dashed] (M1) -- (B1) node[midway, below right, black] {\textbf{2}};
\draw[thick, black, dashed] (M1) -- (C1) node[midway, above, black] {\textbf{2}};

\foreach \p in {A1, B1, C1, M1}{
    \fill[black] (\p) circle (1.5pt);
    \node[above right, scale=1] at (\p) {\textbf{7}};
}

\node[black] at ($(A1)!0.5!(M1)!0.5!(C1)$) {\textbf{2}};
\node[black] at ($(A1)!0.5!(M1)!0.5!(B1)$) {\textbf{2}};
\node[black] at ($(B1)!0.5!(M1)!0.5!(C1)$) {\textbf{2}};

\draw[thick, black] (A2) -- (B2) node[midway, below, black] {\textbf{5}};
\draw[thick, black] (B2) -- (C2) node[midway, right, black] {\textbf{5}};
\draw[thick, black] (C2) -- (A2) node[midway, left, black] {\textbf{5}};

\foreach \p in {A2, B2, C2, M2}{
    \fill[black] (\p) circle (1.5pt);
    \node[above right, scale=1] at (\p) {\textbf{7}};
}

\draw[thick, ->, >=stealth, line width=2pt] (7.5,3) -- (9.5,3);

\node at ($(A2)!0.52!(B2)!0.28!(C2) + (0.2,0.2)$) {\textbf{2}};;
\end{tikzpicture}
\caption{An example of the canonical coarsening procedure.}\label{fig: canonical-coarsening}
\end{figure}
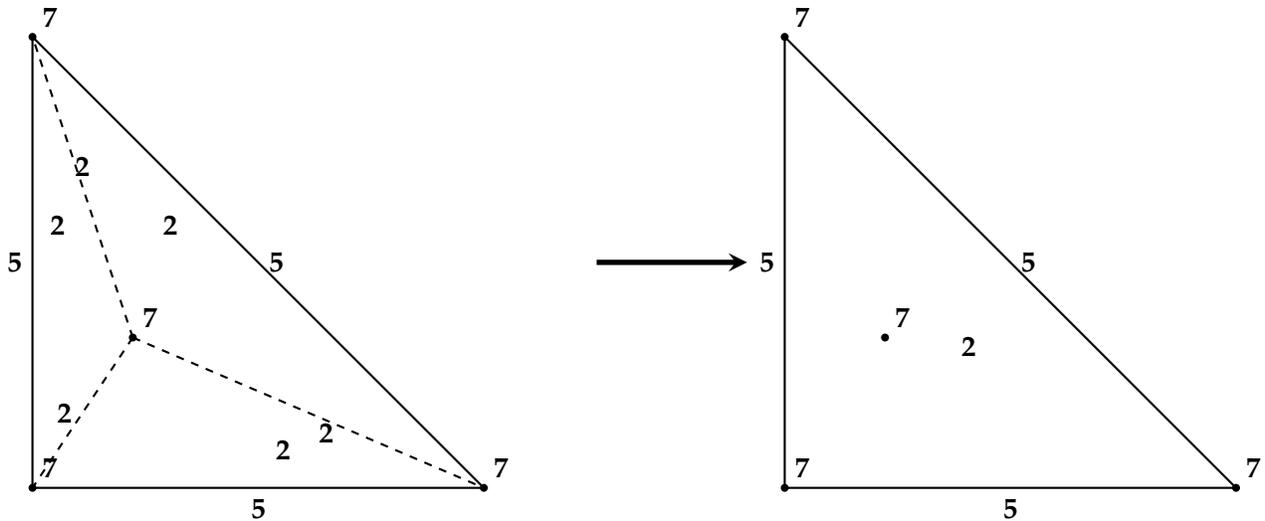

\end{example}

This notion allows us to define a natural filtration on $(\Sigma,k)$.  

\begin{definition}
Given a $K$-weighted piecewise linear space $(\Gamma,k)$ its {\it truncation below dimension $r$} is constructed as follows. First adjust the weights according to the following rule. For cells of dimension at least $r$, change nothing. For a cell $\gamma$ that has dimension smaller than $r$, change it in the following situation -- if every stratum of $\Gamma$ that properly contains $\gamma$ in its closure has the same weight, say $w$, then change the weight of $\gamma$ to $w$. Now pass to canonical coarsening.

The truncation below dimension $r$ is denoted $(\Gamma,k)_{\geq r}$, or simply $\Gamma_{\geq r}$ when $k$ is clear from context. 
\end{definition}

\begin{example}
The next three diagrams shown in Figure~\ref{fig: dim-filt} capture the tropical dimension filtration -- the first picture is the $K$-weighted piecewise linear space; the weights are omitted. The next two are its truncations below dimension $1$ and below dimension $2$. 
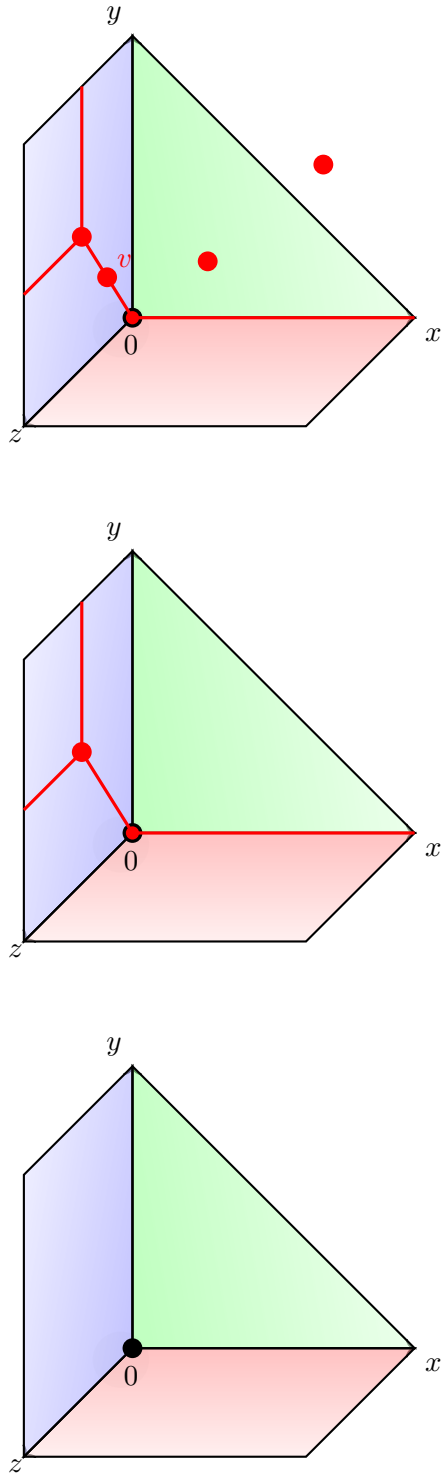
\begin{figure}[h!]
$$
\begin{tikzpicture}[scale=2.5]

\coordinate (O) at (0,0,0);

\coordinate (X) at (1.5,0,0);
\coordinate (Y) at (0,1.5,0);
\coordinate (Z) at (0,0,1.5);

\draw[thick,->] (O) -- (X) node[below right] {$x$};
\draw[thick,->] (O) -- (Y) node[above left] {$y$};
\draw[thick,->] (O) -- (Z) node[above right,xshift=-10pt,yshift=-10pt] {$z$};

\fill[gray!40,opacity=0.15] ($(O)+(-0.1,-0.1,-0.1)$) circle (0.15);

\shade[bottom color=blue!10, top color=blue!40, shading angle=-100, opacity=0.6]
  (O) -- (Y) -- ($(Y)+(0,0,1.5)$) -- (Z) -- cycle;

\shade[bottom color=red!10, top color=red!40, shading angle=0, opacity=0.6]
  (O) -- (X) -- ($(X)+(0,0,1.5)$) -- (Z) -- cycle;

\shade[bottom color=green!10, top color=green!40, shading angle=100, opacity=0.6]
  (O) -- (X) -- (Y) -- cycle;

\draw[thick] (O) -- (X) -- ($(X)+(0,0,1.5)$) -- (Z) -- cycle;
\draw[thick] (O) -- (Y) -- ($(Y)+(0,0,1.5)$) -- (Z) -- cycle;
\draw[thick] (O) -- (X) -- (Y) -- cycle;

\fill[black] (O) circle (1.5pt);
\node[below left,xshift=6pt,yshift=-3pt,black] at (O) {$0$};

\fill[red] (0.4,0.3,0) circle (1.5pt);
\fill[red] (1.4,1.2,1.0) circle (1.5pt);

\coordinate (V) at (0,0.7,0.7);

\coordinate (Vy) at (0,0.7,1.5);   
\coordinate (Vz) at (0,1.5,0.7);   

\coordinate (M) at ($(V)!0.5!(O)$);

\draw[very thick, red] (V) -- (O);
\draw[very thick, red] (V) -- (Vy);
\draw[very thick, red] (V) -- (Vz);

\fill[red] (V) circle (1.5pt);

\fill[red] (M) circle (1.5pt);
\node[above right, red] at (M) {$v$};

\draw[very thick, red] (O) -- (X);
\fill[red] (O) circle (1pt);

\end{tikzpicture}
$$

$$\begin{tikzpicture}[scale=2.5]

\coordinate (O) at (0,0,0);

\coordinate (X) at (1.5,0,0);
\coordinate (Y) at (0,1.5,0);
\coordinate (Z) at (0,0,1.5);

\draw[thick,->] (O) -- (X) node[below right] {$x$};
\draw[thick,->] (O) -- (Y) node[above left] {$y$};
\draw[thick,->] (O) -- (Z) node[above right,xshift=-10pt,yshift=-10pt] {$z$};

\fill[gray!40,opacity=0.15] ($(O)+(-0.1,-0.1,-0.1)$) circle (0.15);

\shade[bottom color=blue!10, top color=blue!40, shading angle=-100, opacity=0.6]
  (O) -- (Y) -- ($(Y)+(0,0,1.5)$) -- (Z) -- cycle;

\shade[bottom color=red!10, top color=red!40, shading angle=0, opacity=0.6]
  (O) -- (X) -- ($(X)+(0,0,1.5)$) -- (Z) -- cycle;

\shade[bottom color=green!10, top color=green!40, shading angle=100, opacity=0.6]
  (O) -- (X) -- (Y) -- cycle;

\draw[thick] (O) -- (X) -- ($(X)+(0,0,1.5)$) -- (Z) -- cycle;
\draw[thick] (O) -- (Y) -- ($(Y)+(0,0,1.5)$) -- (Z) -- cycle;
\draw[thick] (O) -- (X) -- (Y) -- cycle;

\fill[black] (O) circle (1.5pt);
\node[below left,xshift=6pt,yshift=-3pt,black] at (O) {$0$};


\coordinate (V) at (0,0.7,0.7);

\coordinate (Vy) at (0,0.7,1.5);   
\coordinate (Vz) at (0,1.5,0.7);   

\draw[very thick, red] (V) -- (O);
\draw[very thick, red] (V) -- (Vy);
\draw[very thick, red] (V) -- (Vz);

\fill[red] (V) circle (1.5pt);

\draw[very thick, red] (O) -- (X);
\fill[red] (O) circle (1pt);

\end{tikzpicture}
$$

$$
\begin{tikzpicture}[scale=2.5]

\coordinate (O) at (0,0,0);

\coordinate (X) at (1.5,0,0);
\coordinate (Y) at (0,1.5,0);
\coordinate (Z) at (0,0,1.5);

\draw[thick,->] (O) -- (X) node[below right] {$x$};
\draw[thick,->] (O) -- (Y) node[above left] {$y$};
\draw[thick,->] (O) -- (Z) node[above right,xshift=-10pt,yshift=-10pt] {$z$};

\fill[gray!40,opacity=0.15] ($(O)+(-0.1,-0.1,-0.1)$) circle (0.15);

\shade[bottom color=blue!10, top color=blue!40, shading angle=-100, opacity=0.6]
  (O) -- (Y) -- ($(Y)+(0,0,1.5)$) -- (Z) -- cycle;

\shade[bottom color=red!10, top color=red!40, shading angle=0, opacity=0.6]
  (O) -- (X) -- ($(X)+(0,0,1.5)$) -- (Z) -- cycle;

\shade[bottom color=green!10, top color=green!40, shading angle=100, opacity=0.6]
  (O) -- (X) -- (Y) -- cycle;

\draw[thick] (O) -- (X) -- ($(X)+(0,0,1.5)$) -- (Z) -- cycle;
\draw[thick] (O) -- (Y) -- ($(Y)+(0,0,1.5)$) -- (Z) -- cycle;
\draw[thick] (O) -- (X) -- (Y) -- cycle;

\fill[black] (O) circle (1.5pt);
\node[below left,xshift=6pt,yshift=-3pt,black] at (O) {$0$};


\end{tikzpicture}
$$
\caption{An example of the tropical dimension filtration in three steps.}\label{fig: dim-filt}
\end{figure}
\end{example}

It is straightforward to see that truncation below dimension $r$ is partially balanced in the sense that it satisfies the balancing condition about all faces of dimension $r$ and above. 

If $\dim (\Gamma,k)$ is $m$, then by construction, we have maps of stratified spaces
\[
\Gamma = \Gamma_{\geq 0}\to \Gamma_{\geq 1}\to\cdots\to\Gamma_{\geq m}
\]
Each space refines the next. This is called the {\it tropical dimension filtration}. 

Intuitively, the space $\Gamma_{\geq m}$ should be thought of as a cone complex of pure dimension $m$. Moving left in the filtration by one steps introduces cells of increasing dimension, starting with dimension $m-1$. In the forward direction, each map ``erases'' faces when they can be erased. 

When we add a cell, these can either be embedded in cells of higher dimension, with some unconstrained integer decoration, or can they ``float'', i.e. they map to cells where the $K$-weight is $0$.

We now explain a situation in which balancing can be checked in a straightforward manner. Fix any cell $\gamma$ in $\Gamma_{\geq r}$ and consider the refinement
\[
\gamma'\to \gamma
\]
induced by $\Gamma_{\geq r-1}\to\Gamma_{\geq r}$. Consider an $r-2$-dimensional cell $\delta$ of $\gamma'$ along which $r-1$-dimensional cells $\tau_1,\ldots,\tau_s$ meet.

Let $k_\gamma$ be the $K$-weight of $\gamma$. The {\it excess-$\chi$} at $\tau_i$ is 
\[
k_{\tau_i}-k_\sigma. 
\]
Maintain the notation above for the following lemma. 

\begin{lemma}
Suppose all $r-1$-dimensional cells that contain $\delta$ in their closure are contained inside $\gamma$. Replacing the $K$-weight of $\tau_i$ with its excess-$\chi$, the traditional balancing condition is satisfied at $\delta$. 
\end{lemma}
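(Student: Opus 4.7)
The strategy is to reduce to traditional (Chow-level) balancing by linearity of the $K$-balancing condition: we subtract the ``constant weight $k_\gamma$'' part of the decoration on the star of $\delta$, leaving a decoration supported only on rays, on which $K$-balancing collapses to traditional balancing.

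First I would unpack the hypothesis geometrically. Under the assumption that no $(r-1)$-dimensional cell through $\delta$ lies outside $\gamma$, no higher-dimensional cell through $\delta$ can lie outside $\gamma$ either: any such cell would have an $(r-1)$-dimensional boundary face through $\delta$, which by hypothesis is forced into $\gamma$, and this propagates to neighboring faces until the whole cell is in $\gamma$ locally near $\delta$. Hence every cone of the star fan $\Sigma_\delta$ of $\delta$ corresponds to a cell of $\Gamma$ inside $\gamma$: the rays are exactly the $\tau_i$, and every cone of dimension $\geq 2$ corresponds to a cell of $\Gamma$ of dimension $\geq r$ inside $\gamma$. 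By the canonical coarsening used to define $\Gamma_{\geq r}$, every such cell carries $K$-weight $k_\gamma$. So the induced decoration on $\Sigma_\delta$ assigns $k_\gamma$ to every cone of dimension $\geq 2$ and $k_{\tau_i}$ to the ray through $\tau_i$.

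Next, the constant decoration $k^{\mathrm{const}}\colon \sigma \mapsto k_\gamma$ on $\Sigma_\delta$ is trivially $K$-balanced, since it is realized by the $K$-theory class $k_\gamma \cdot [\mathcal{O}_{Y_{\Sigma_\delta}}]$ and each character relation $\prod_\rho [\mathcal{O}(-E_\rho)]^{\langle u, v_\rho\rangle} = 1$ is an identity in $K$-theory. By linearity of the $K$-balancing condition, the difference $k'$ of the induced decoration and $k^{\mathrm{const}}$ is also $K$-balanced at $\delta$. By construction, $k'$ vanishes on every cone of dimension $\geq 2$ and takes value $k_{\tau_i} - k_\gamma$ on the ray through $\tau_i$. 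For each character $u$ of the dual lattice of $N_\RR / \mathrm{span}(\delta)$, expanding the $K$-balancing relation via $[\mathcal{O}(-E_\rho)] = 1 - [\mathcal{O}_{E_\rho}]$ and the inclusion-exclusion formulas of Section~\ref{sec: toolbox} yields
\[
\sum_{\rho} \langle u, v_\rho \rangle \, k'(\rho) \;+\; \textnormal{(linear combinations of $k'(\sigma)$ with $\dim \sigma \geq 2$)} \;=\; 0.
\]
The higher-codimension corrections vanish by the previous step, leaving $\sum_i \langle u, v_i \rangle (k_{\tau_i} - k_\gamma) = 0$ for every $u$, which is precisely traditional balancing at $\delta$ for the rays $\tau_i$ with weights $k_{\tau_i} - k_\gamma$.

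The main obstacle is the bookkeeping in the inclusion-exclusion expansion: one must verify that the character relation, written as a combination of structure sheaves of strata, has ray contributions with coefficients $\langle u, v_\rho\rangle$ and higher-codimension contributions supported only on cones of dimension $\geq 2$. This follows from the multiplicity computations of Section~\ref{sec: toolbox}, using that $[\mathcal{O}_{E_\rho}]^k$ and $[\mathcal{O}_{E_\rho}] \cdot [\mathcal{O}_{E_{\rho'}}]$ are supported in codimension $\geq 2$, so after subtracting the constant part these terms no longer contribute to the Euler-characteristic relation.
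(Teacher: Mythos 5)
Your proposal is correct and follows essentially the same route as the paper: the paper subtracts the \emph{pullback} $K$-weights along $\Gamma_{\geq r-1}\to\Gamma_{\geq r}$, which under the hypothesis that every cell through $\delta$ lies in $\gamma$ coincide on the star of $\delta$ with your constant decoration $k_\gamma$, and then concludes exactly as you do that the leftover ray-supported decoration must satisfy classical balancing. Your version merely makes explicit the first-order expansion of the character relation showing that $K$-balancing for a ray-supported decoration reduces to $\sum_\rho\langle u,v_\rho\rangle\,k'(\rho)=0$, which the paper leaves implicit.
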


\begin{proof}
Consider the subdivision $\Gamma_{\geq r-1}\to\Gamma_{\geq r}$, and equip $\Gamma_{\geq r-1}$, instead of its given $K$-weights, with its {\it pullback} $K$-weights. With these weights, balancing certainly holds at $\delta$. The lemma follows from the $K$-balancing formula. Indeed, since we have balancing with both the old weights and the new weights, it follows immediately that the excess-$\chi$ terms must cancel. 
\end{proof}


\subsubsection{$K$-balanced projections}\label{sec: K-projections} Let $(\Gamma,k)$ be a $K$-weighted polyhedral complex in $N_\RR$ of dimension $m$. A useful way of extracting information from the $K$-balancing condition is by linear projections. Consider a linear map
\[
b\colon N_{\mathbb R}\to L_{\mathbb R}
\]
induced by a subspace $L^\vee$ of the character space $M$. We will mostly be interested in this when $\dim L_{\mathbb R}$ is equal to $m$ or less -- when $|\Gamma|\to L_{\mathbb R}$ is plausibly surjective. 

Given $N_{\mathbb R}\to L_{\mathbb R}$ and $\Gamma$, possibly after dilating, replacing $\Gamma$ with a subdivision and equipping it with the pullback weight, we can find a polyhedral decomposition $\mathcal L$ of $L_{\mathbb R}$ such that the induced map
\[
\Gamma\to\mathcal L
\]
is {\it combinatorially flat} -- every polyhedron of the domain maps surjectively onto its image polyhedron. 

We now equip the maximal dimensional cells of $\mathcal L$ with a {\it pushforward weight}. We first explain this conceptually, and then record the required practical consequence. 

Fix a vertex $V$ of $\mathcal L$ and a full-dimensional cone $\sigma_V$ in the star of $V$. Let $V_1,\ldots, V_b$ be the vertices of $\Gamma$ mapping to $V$. 

The polyhedral decomposition $\mathcal L$ gives rise to a degeneration-compactification of the torus $L\otimes\mathbb G_m$. We denote it $B_{\mathcal L}$. There is a map
\[
b\colon Y_\Gamma\to B_{\mathcal L}.
\]
The vertex $V$ determines a choice of a component of $B_{\mathcal L}$. The choice of cone $\sigma$ determines a $0$-dimensional stratum on that component. Let $p_{V,\sigma}$ denote this point. Note also that $(\Gamma,k)$ determines a $K$-balanced integer decoration on the star of every vertex and therefore a $K$-theory class on $Y_\Gamma$. Denoted it by $\alpha(\Gamma,k)$

\begin{definition}
Given $\Gamma\to\mathcal L$ as above, the pushforward weight at $V$ along $\sigma$ is the Euler characteristic of the product of $b^{\star}([\mathcal O_{p_{V,\sigma}}])$ with the class $\alpha(\Gamma,k)$. 
\end{definition}

Note that $b$ is flat by construction, e.g. from the polyhedral criterion for flatness of toric morphisms~\cite[Section~4]{AbramovichKaru}, so the pullback $b^\star$ is well-defined. 

\begin{proposition}[Projections are $K$-balanced]\label{prop: pushforwards}
The pushforward weight of $\Gamma\to\mathcal L$ at a vertex $V$ along any maximal cone is independent of $V$ and choice of cone. Furthermore, this weight can be computed from the asymptotic weight of $\Gamma$. 
\end{proposition}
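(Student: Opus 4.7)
The plan is to reduce the computation to an Euler characteristic on the generic fiber of a toric degeneration, where every torus-fixed point represents the same $K$-theoretic class. Package the setup in a one-parameter family: the cone over $\Gamma$ produces a degeneration $\mathcal{Y}_\Gamma \to \mathbb{A}^1$ with special fiber $Y_\Gamma$ and generic fiber $Y = Y_{\Gamma_0}$, where $\Gamma_0$ is the recession fan, and similarly $\mathcal{L}$ gives $\mathcal{B}_{\mathcal{L}} \to \mathbb{A}^1$ with special fiber $B_{\mathcal{L}}$ and generic fiber $B = B_{\mathcal{L}_0}$. Combinatorial flatness of $\Gamma \to \mathcal{L}$ upgrades to a flat morphism $\mathcal{Y}_\Gamma \to \mathcal{B}_{\mathcal{L}}$ over $\mathbb{A}^1$.

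The main technical step is to lift $\alpha(\Gamma, k)$ to a class $\tilde{\alpha}$ on $\mathcal{Y}_\Gamma$ whose restriction to the generic fiber is exactly $\alpha(\Gamma_0, k_0)$. This is where Definition~\ref{def: asymptotic-weights} enters: its signed alternating sum is precisely the inclusion/exclusion formula for structure sheaves of strata on a strata-compatible degeneration, as recorded in Section~\ref{sec: toolbox}. Concretely, the structure sheaf of the closed stratum $W_G \subset Y$ attached to a face $G$ of $\Gamma_0$ specializes to $\sum_{F \in \mathsf{sp}^{-1}(G)} (-1)^{\dim F - \dim G}[\mathcal{O}_{W_F}]$ on $Y_\Gamma$, so a $K$-class on $Y$ whose $K$-weights are the asymptotic weights $k_0$ specializes to the $K$-class on $Y_\Gamma$ determined by $k$. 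Since the $K$-balancing condition is equivalent to the existence of such a $K$-class, this identifies $\tilde{\alpha}|_\eta$ with $\alpha(\Gamma_0, k_0)$.

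Given a torus-fixed point $p_{V,\sigma}$ of $B_{\mathcal{L}}$, choose a $T$-equivariant $\mathbb{A}^1$-section $\tilde{p} \hookrightarrow \mathcal{B}_{\mathcal{L}}$ specializing to $p_{V,\sigma}$ whose generic fiber $p_\eta$ is a torus-fixed point of $B$; this exists by selecting, in the star of the cone of $p_{V,\sigma}$, a maximal cone of the recession fan. Flatness of $b$ makes $b^\star[\mathcal{O}_{\tilde{p}}]$ well-defined, and constancy of Euler characteristics in flat families yields
\[
\chi\bigl(Y_\Gamma, \alpha(\Gamma,k)\cdot b^\star[\mathcal{O}_{p_{V,\sigma}}]\bigr) \;=\; \chi\bigl(Y, \alpha(\Gamma_0,k_0)\cdot b_\eta^\star[\mathcal{O}_{p_\eta}]\bigr).
\]
On the smooth projective toric variety $B$, every torus-fixed point is represented by the Koszul class of a smooth closed point, and all closed points have the same $K$-class; therefore the right-hand side is independent of $p_\eta$ and is manifestly computed from the asymptotic data $(\Gamma_0, k_0)$. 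Independence from $(V,\sigma)$ follows because different specialization trajectories of the same $p_\eta$ reach different $p_{V,\sigma}$, while yielding the same generic-fiber computation.

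The main obstacle is the lifting step: verifying that the $K$-balancing decoration $k$ genuinely assembles into a $K$-class that is the specialization of $\alpha(\Gamma_0,k_0)$. This reduces to matching the combinatorial alternating sum of Definition~\ref{def: asymptotic-weights} with the $K$-theoretic inclusion/exclusion along a regular crossings degeneration, and checking compatibility with the toric relations imposed by characters at every cone.
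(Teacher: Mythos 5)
Your argument is correct, but it takes a genuinely different route from the paper's. The paper's proof is a two-line argument carried out entirely on the special fiber: since $B_{\mathcal L}$ is a connected union of toric varieties, the structure sheaves of any two closed points define the same class in $K$-theory, so $\chi\bigl(Y_\Gamma,\alpha(\Gamma,k)\cdot b^\star[\mathcal O_{p_{V,\sigma}}]\bigr)$ is independent of $(V,\sigma)$ immediately; the relation to asymptotic weights is then obtained by choosing an \emph{unbounded} cell of $\mathcal L$ to perform the computation, since the fiber over such a cell only sees unbounded cells of $\Gamma$ and hence only asymptotic data. You instead move the whole computation to the generic fiber of the degeneration $\mathcal Y_\Gamma\to\mathbb A^1$, which requires the additional input that the class $\alpha(\Gamma_0,k_0)$ on the recession fan specializes to $\alpha(\Gamma,k)$ -- precisely the matching of Definition~\ref{def: asymptotic-weights} with the inclusion/exclusion of Section~\ref{sec: basic-formulae} that you flag as the main obstacle. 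That fact is true and is in effect what the paper asserts later in Construction~\ref{con: slicing}, so you are front-loading a lemma the paper defers; the cost is that your proof of this proposition silently depends on a specialization identity that the paper only needs afterwards, and on a mildly fiddly choice of equivariant section through $p_{V,\sigma}$ (more cleanly handled by taking the flat limit of a general point of the generic fiber). What your approach buys is that both halves of the statement fall out of a single mechanism -- constancy of Euler characteristics in the flat family -- and that the identification of the pushforward weight with an honest degree computed on the smooth generic fiber is made explicit, which is conceptually closer to the ``harmonic morphism'' interpretation the paper alludes to after the proposition.
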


\begin{proof}
The structure sheaves of different points on $B_\mathcal L$ define the same class in $K$-theory, so the independence from the choice of cells follows immediately. By choosing an unbounded cell in $\mathcal L$ to do the computation, we see the computation can be done using the asymptotic weight. 
\end{proof}

Proposition \ref{prop: pushforwards} is closely related the notion of a {\it harmonic} morphism in tropical geometry. We conclude the section with some remarks that should clarify the statements.

\begin{remark}[The top dimensional cells]
In order to understand the pushforward weight, first consider the case where $\Gamma$ has dimension $m$ and $\mathcal L$ also has dimension $m$. In this case, the pushforward weight has a very simple interpretation. For each $m$-dimensional cell of $\mathcal L$, consider the $m$-dimensional cells that map surjectively onto it. Since $\Gamma$ has cells of dimension at most $m$, these cells also map {\it injectively}, but may do so via a linear map with non-unit determinant -- the {\it expansion factor}. In this case, the pushforward weight is the sum of the weights of these cells, where each is further multiplied by the absolute value of the determinant. 

The $K$-balancing condition is just the fact that these sums are the same. In other words, the map has a well-defined degree. 
\end{remark}

In this next remark, we explain in more detail how to practically compute the pushforward weight. One consequence of this will be to help us bound the possible slopes/directions of faces. 

\begin{remark}[Intermediate cells]\label{rem: intermediate cells}
Now consider $\Gamma\to\mathcal L$ where the dimension of $L$, which we denote $\ell$, is potentially {\it smaller} than $m = \dim \Gamma$. Fix a cell $\lambda$ in the target $\mathcal L$. There cells that map surjectively onto $\lambda$ have dimensions potentially ranging from $\ell$ to $m$. 

The faces of $\Gamma$ that map onto $\lambda$ can be organized into an abstract simplicial complex via the incidence poset. Specifically, the vertices correspond to $\ell$-dimensional cells mapping onto $\lambda$. The edges correspond to $\ell+1$-dimensional cells, with the incidence structure provided by the face inclusion of $\ell$-faces into $(\ell+1)$-faces. Similarly, $2$-simplices are $\ell+2$-dimensional faces, and so on. 

This simplicial complex has geometric meaning in the context of the map of broken toric varieties
\[
b\colon Y_\Gamma\to B_{\mathcal L}.
\]
The cell $\lambda$ corresponds to a closed point. The vertices of this simplicial complex correspond to the irreducible components in the preimage, the higher dimensional cells are intersections, and so on.

The weight on $\lambda$ is defined above by the Euler characteristic of $\alpha(\Gamma,k)$ restricted to this fiber and we record how to compute it. Spelling out the Euler characteristic formulae from Section~\ref{sec: basic-formulae}, there is a contribution for each cell in the simplicial complex above, and we take the alternating sum of contributions, with sign depending on the dimension. The weight $k$ determines a decoration on every cell. We would like to simply take the alternating sum of these contributions, however, this is only true if every $\ell$-dimensional cell of $\mathcal L$ that maps to $\lambda$ does so with unit determinant. Indeed, if the determinants are unit, then the preimage of the point $p_\lambda$ under $b$ is a reduced union of complete intersection strata in a toric component of $B_\Lambda$. 

If the determinants are not unit, then the inclusion/exclusion formula is not a complete intersection of reduced hypersurfaces, but of non-reduced hypersurfaces. In this case, we can use the discussion of multiplicities in Section~\ref{sec: basic-formulae} -- there are universal formulae that describe the contributions in terms of those recorded by $k$. 

In the next section, we will use the following observation. If we fix the decoration values of $k$ on cells of $\Gamma$ of dimension {\it strictly larger than $\ell$} but consider increasing the integer decoration on an $\ell$-dimensional cells $\lambda'$ that maps to $\lambda$, then by the multiplicity discussion in Section~\ref{sec: basic-formulae}, the change in the pushforward weight is equal to the change in weight times the absolute value of the determinant of $\lambda'\to\lambda$. 
\end{remark}

\section{Finiteness for $K$-tropicalizations}\label{sec: finiteness-K}

We prove that the set of $K$-tropicalizations that arise as tropicalizations of a transverse quotient sheaf with fixed Hilbert polynomial is finite. 

Fix a pair $(X,D)$ and a logarithmically flat sheaf $\mathcal E$. Consider logarithmic quotients on expansions $p\colon X_\Gamma\to X$, with fixed asymptotic $K$-weights, denoted $\Lambda$. Each logarithmic quotient $p^\star\mathcal E\twoheadrightarrow \mathcal F$ determines a $K$-tropicalization -- a piecewise linear space with $K$-weights. A $K$-tropicalization has a combinatorial type. A combinatorial type of a piecewise linear space is defined in~\cite{Logquot}. Here we demand that the $K$-weights are respected. In short, taking combinatorial type equates two $K$-tropicalizations if they sit in a piecewise linear family, compatible with the $K$-weights. 

\begin{theorem}\label{thm: combinatorial-boundedness}
The set of combinatorial types of $K$-tropicalizations of points of $\mathsf{Quot}_\Lambda(X|D,\mathcal E)$ is finite. 
\end{theorem}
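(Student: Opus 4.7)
The first step is a reduction to the toric case. Using the strict embedding $s\colon (X,D)\hookrightarrow (\mathbb P,\partial\mathbb P)$ into the projective toric bundle constructed in Section~\ref{sec: K-tropicalizations}, the pushforward $s_\star\mathcal F$ is algebraically transverse on expansions of $\mathbb P$, and by Lemma~\ref{lem:CanPullBack} every expansion of $X$ comes, possibly after further expansion, from one of $\mathbb P$. The $K$-tropicalization of $s_\star\mathcal F$ refines that of $\mathcal F$ and the asymptotic data is preserved under the embedding. It therefore suffices to prove the theorem when $(X,D)$ is a smooth projective toric pair equipped with integer-valued toric $K$-weights.

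Next I would argue by induction down the tropical dimension filtration $\mathscr T = \mathscr T_{\geq 0}\to\mathscr T_{\geq 1}\to\cdots\to\mathscr T_{\geq m}$ of a $K$-tropicalization of dimension $m$. The top layer $\mathscr T_{\geq m}$ is a pure-dimensional polyhedral complex whose $K$-balancing reduces to classical tropical balancing and whose recession fan together with top-dimensional weights is fixed by $\Lambda$. Finiteness of combinatorial types here follows from a suitably generalized secondary polytope argument, combined with the projection reduction of Nishinou--Siebert~\cite{NishSieb06} to handle dimensions greater than one. The generalization of the secondary polytope finiteness from curves to arbitrary dimension is the ``milder'' phenomenon alluded to in the introduction and amounts to boundedness for a yet-to-be-defined logarithmic Chow variety.

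The inductive step from $\mathscr T_{\geq r+1}$ to $\mathscr T_{\geq r}$ is the heart of the matter. Having fixed a combinatorial type for $\mathscr T_{\geq r+1}$, one must show that only finitely many $r$-dimensional embedded components can be attached inside each cell $\gamma$ of $\mathscr T_{\geq r+1}$. The excess-$\chi$ lemma from Section~\ref{sec: K-tropicalizations} shows that, after replacing the $K$-weight on each relevant $(r{-}1)$-stratum by its excess-$\chi$, the local picture inside $\gamma$ satisfies \emph{ordinary} tropical balancing. Applying the secondary polytope argument locally within $\gamma$ bounds the combinatorial types of the underlying subdivision. To pin down the $K$-weights themselves, I would then apply Proposition~\ref{prop: pushforwards}: for a linear projection $b\colon N_\RR\to L_\RR$ of suitable dimension, the pushforward weights on the target $\mathcal L$ are computed from $\Lambda$. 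As analyzed in Remark~\ref{rem: intermediate cells}, with all weights in dimensions strictly greater than $r$ already fixed by induction, the pushforward weight depends linearly on each new $r$-dimensional weight with coefficient equal to a nonzero expansion factor; integrality forces finitely many choices per cell.

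The principal obstacle is the inductive step for embedded components. Unlike in the classical theory, where positivity of multiplicities together with a bound on total degree immediately controls the cycle, $K$-weights can be negative so the naive argument fails. The replacement is the $K$-balancing condition organized through linear projections: Proposition~\ref{prop: pushforwards} guarantees that pushforward weights are invariants of the asymptotic data $\Lambda$, and the explicit multiplicity formulae of Section~\ref{sec: basic-formulae} convert $K$-balancing into a finite system of linear constraints on the decorations whose solution set, once the higher-dimensional decorations have been fixed, is finite. Assembling these local constraints across all projections and cells into a single global finiteness statement, while simultaneously bounding the positions and slopes of the embedded cells, constitutes the main technical content of the proof.
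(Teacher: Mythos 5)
Your plan follows the same architecture as the paper's argument — toric reduction, induction down the dimension filtration, excess-$\chi$ to recover ordinary balancing locally, and projections/pushforward weights to constrain the decorations — but as written it has two genuine gaps that the paper's proof is built specifically to close. First, your induction runs only on the filtration level $r$; the paper's argument is a \emph{double} induction, on $r$ and on the dimension of the ambient cocharacter space. The second induction is not optional: when you flatten a projection $\Gamma\to\mathcal L$ and try to extract constraints on the new $(r{-}1)$-dimensional weights from the pushforward weight on a cell of $\mathcal L$, you get one linear relation involving \emph{all} the cells of $\Gamma$ mapping onto that cell. To conclude anything you must first bound how many such cells there are, and the paper obtains this by slicing (Construction~\ref{con: slicing}): the fiber over a generic point is itself a $K$-balanced complex in a lower-dimensional space with controlled asymptotics, so the inductive hypothesis in smaller ambient dimension bounds the number of faces in the fiber. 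Without this, your ``finite system of linear constraints'' has an unknown number of unknowns. Second, your claim that ``integrality forces finitely many choices per cell'' does not survive the fact that $K$-weights can be negative, which you yourself flag as the principal obstacle. The paper's resolution is a two-sided squeeze: fixing $\Gamma_{\geq r}$ fixes the Euler characteristics of intersections with all strata of codimension $\geq r$, which yields a \emph{lower} bound on each new weight, while the pushforward weight (determined by $\Lambda$) combined with the multiplicity formulae yields an \emph{upper} bound on any cell with nonzero expansion factor. You need to supply both bounds explicitly; integrality alone gives nothing.

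A smaller but still substantive omission: for the base of the induction (pure-dimensional balanced cycles) you appeal to ``a suitably generalized secondary polytope argument'' plus Nishinou--Siebert. The paper makes this concrete in two steps that you would need to reproduce: finiteness for balanced tropical \emph{hypersurfaces} via the Fulton--Sturmfels duality with subdivisions of the dual polytope (Proposition~\ref{prop:bddTropHyp}), and then recovery of an arbitrary-codimension cycle from finitely many coordinate-style projections to hypersurface dimension, using the fact that each stratum equals the intersection of the preimages of its images under these projections (Lemma~\ref{lemma:DeterminedByProjections}). The latter requires first bounding the possible linear spans of faces (the Pl\"ucker-coordinate bound of Proposition~\ref{prop:PluckerBound}), which again rests on the induction in ambient dimension. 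Your plan correctly identifies where the difficulty lies, but the steps you defer are exactly the content of the theorem.
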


\subsection{The general case from the toric case} We reduce the general finiteness statement for $K$-tropicalizations to the toric case. Recall that $X$ is a smooth projective variety with an snc divisor $D$ with components $D_1,\ldots, D_k$.

\begin{proposition}\label{prop: global-toric-embedding}
There is a strict embedding
\[
X\hookrightarrow Y
\]
where $Y$ is the projectivization of a direct sum of line bundles over a product of projective spaces $\mathbb P^{\underline n}$, where $Y$ is equipped with its fiberwise toric logarithmic structure, denoted $\partial Y$.
\end{proposition}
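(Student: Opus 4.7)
The plan is to embed $X$ into a product of projective spaces $B$ chosen so that each $L_i := \mathcal{O}_X(D_i)$ is the restriction of a very explicit line bundle on $B$, and then to take a projective bundle over $B$ of a direct sum of such line bundles, together with a section over $X$ that cuts out $D$ from the fiberwise toric boundary. This mirrors the relative bundle construction $\mathbb{P}_X(\bigoplus L_i \oplus \mathcal{O})$ used earlier in the paper, but now with the base itself upgraded to a product of projective spaces so that everything is globally toric in the fiber direction.

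To produce $B$, choose a very ample line bundle $A$ on $X$. After replacing $A$ by a sufficiently high power, each $M_i := A \otimes L_i^{-1}$ is also very ample. The complete linear systems $|A|$ and $|M_i|$ determine morphisms $X \to \mathbb{P}^{n_0}$ and $X \to \mathbb{P}^{n_i}$, and their product yields a closed immersion $X \hookrightarrow B := \mathbb{P}^{n_0} \times \prod_{i=1}^{k} \mathbb{P}^{n_i}$, since the projection to $\mathbb{P}^{n_0}$ alone is already an embedding. Writing $\mathcal{O}_B(1)_j$ for the pullback of $\mathcal{O}(1)$ from the $j$-th factor, set $\tilde L_i := \mathcal{O}_B(1)_0 \otimes \mathcal{O}_B(-1)_i$. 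By construction $\tilde L_i|_X = A \otimes M_i^{-1} = L_i$, so the ideal sheaves of the $D_i$ arise as restrictions of globally defined line bundles on $B$ of an explicit bi-degree.

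Now set $Y := \mathbb{P}_B\bigl(\tilde L_1 \oplus \cdots \oplus \tilde L_k \oplus \mathcal{O}_B\bigr)$, equipped with its fiberwise toric log structure $\partial Y$. This is a projectivization of a direct sum of line bundles over $\mathbb{P}^{\underline n}$, as demanded. The defining sections $s_i \in H^0(X, L_i)$ of the $D_i$, together with $1 \in H^0(X, \mathcal{O}_X)$, assemble into a line subbundle $\mathcal{O}_X \hookrightarrow (\bigoplus_i \tilde L_i \oplus \mathcal{O}_B)|_X$; this is an inclusion of line bundles because the last coordinate is nowhere zero. It therefore defines a section $\sigma \colon X \to Y|_X \subset Y$ lifting $X \hookrightarrow B$, and $\sigma$ is a closed immersion because its composition with $Y \to B$ is.

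For strictness, observe that $\sigma(X)$ avoids the `infinity' fiberwise toric divisor (the vanishing of the $\mathcal{O}_B$ coordinate) because the last entry of $(s_1, \ldots, s_k, 1)$ is identically nonzero, while the $i$-th fiberwise coordinate hyperplane pulls back scheme-theoretically to the vanishing locus of $s_i$, which is $D_i$. Hence $\sigma^{-1}(\partial Y) = D$ as snc divisors, and the map of characteristic monoids induced by $\sigma$ is an isomorphism at every point of $X$. The one place where effort is required is the initial step of arranging that each $L_i$ descend to a line bundle on a \emph{product} of projective spaces rather than to an arbitrary projective bundle over $X$; this is precisely what the auxiliary trick $M_i := A \otimes L_i^{-1}$ achieves.
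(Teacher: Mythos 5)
Your proposal is correct and follows essentially the same route as the paper: embed $X$ into a product of projective spaces so that each $\mathcal O_X(D_i)$ is the restriction of a bidegree $(1,-1)$ line bundle, then take the projectivized direct sum over that product and use the sections $(s_1,\ldots,s_k,1)$ to produce a strict closed immersion. The only cosmetic difference is that you share a single very ample $A$ across all factors (giving $\tilde L_i = \mathcal O_B(1)_0\otimes\mathcal O_B(-1)_i$), whereas the paper writes each $L_i$ as an independent difference of hyperplane sections; your version supplies the details the paper leaves implicit.
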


Note $Y$ is abstractly a toric variety, but we have only equipped it with a snc divisor coming from the projective bundle directions, which is a union of torus invariant divisors. The remaining toric divisors, not included in $\partial Y$, are a union of pullbacks of hyperplane sections from the factors of $\mathbb P^{\underline n}$. 

\begin{proof}
We have seen that we can embed $X$ into a projective bundle $\mathbb P$ over $X$, and this is the projectivization of a direct sum of line bundles on $X$. Let $L_1,\ldots, L_k$ be the line bundles. Since $X$ is projective, we can express each $L_i$ as a difference of hyperplane sections on $X$ and so as the pullback of a line bundle of the form $\mathcal O(1,-1)$. Using this embedding, the bundle $\mathbb P$ is pulled back from the projectivized direct sum of corresponding line bundles on this product of projective spaces. 
\end{proof}

We can now state the reduction step. 

\begin{proposition}\label{prop:Toric Case Enough}
Assume Theorem~\ref{thm: combinatorial-boundedness} for pairs $(Y,E)$ consisting of a toric variety and its toric boundary. Then Theorem~\ref{thm: combinatorial-boundedness} holds for all pairs $(X,D)$. 
\end{proposition}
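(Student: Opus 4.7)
The plan is to reduce to the toric case using Proposition \ref{prop: global-toric-embedding}, which gives a strict embedding $s\colon (X,D) \hookrightarrow (Y, \partial Y)$ where $Y$ is \emph{abstractly} a smooth projective toric variety. Since $\partial Y$ is only the fiberwise boundary, the full toric boundary $\partial Y^{\mathrm{tor}}$ strictly contains $\partial Y$: it also includes the pullbacks of coordinate hyperplanes from the product of projective spaces in the base of the bundle $Y$. The strategy is to push a logarithmic quotient on $(X,D)$ forward, transversalize with respect to the full toric structure, and then apply the toric case.

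First I would construct the reduction map. Given a logarithmic quotient $[p^\star \mathcal{E} \twoheadrightarrow \mathcal{F}]$ on an expansion $X_\Gamma$ with asymptotic data $\Lambda$, form the pushforward $[p^\star s_\star \mathcal{E} \twoheadrightarrow s_\star \mathcal{F}]$ on the strict ambient expansion $Y_\Gamma$. Strictness of $s$ together with the fact that it is a closed immersion implies that $s_\star \mathcal{E}$ and $s_\star \mathcal{F}$ remain algebraically transverse to $\partial Y$. Apply Theorem \ref{thm: canonical-transversalization} to both sheaves to produce minimal logarithmic modifications realising algebraic transversality with respect to $\partial Y^{\mathrm{tor}}$. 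The outcome is a logarithmic surjection of sheaves on a toric pair. Because $s_\star \mathcal{F}$ is supported on $s(X)$, the asymptotic $K$-weights on $\Sigma(Y, \partial Y^{\mathrm{tor}})$ are determined by $\Lambda$; write $\Lambda'$ for the resulting numerical data.

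Next I would invoke the toric case of Theorem \ref{thm: combinatorial-boundedness} applied to $(Y, \partial Y^{\mathrm{tor}})$, which provides finitely many combinatorial types of $K$-tropicalizations with asymptotic data $\Lambda'$. To conclude, I need the above assignment to be finite-to-one on combinatorial types. The key observation is that the original $(X,D)$-$K$-tropicalization should be recovered by restricting the $(Y, \partial Y^{\mathrm{tor}})$-$K$-tropicalization of the transversalized sheaf to the sub piecewise linear space $\Sigma(X,D) = \Sigma(Y, \partial Y) \subset \Sigma(Y, \partial Y^{\mathrm{tor}})$, because the extra transversalization only subdivides in directions transverse to $\Sigma(Y, \partial Y)$, the sheaf $s_\star \mathcal{F}$ being already algebraically transverse to $\partial Y$.

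The main obstacle will be making this restriction/injectivity claim fully rigorous: verifying that the $K$-weight of each stratum of the $(X,D)$-$K$-tropicalization equals, via pushforward along the closed immersion $s$, the $K$-weight of the corresponding stratum upstairs, and that the canonical transversalization performs no refinement within $\Sigma(X,D)$ itself. Both ought to follow from strictness of $s$, algebraic transversality of $\mathcal{F}$, and the universal property of Theorem \ref{thm: canonical-transversalization}, but the careful bookkeeping on weights and stratifications, together with verifying that the translation $\Lambda \mapsto \Lambda'$ is well-defined and canonical, is the most delicate part of the argument.
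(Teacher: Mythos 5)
Your reduction follows the paper's strategy up to the point where the gap between $\partial Y$ (the fiberwise toric boundary from Proposition~\ref{prop: global-toric-embedding}) and the full toric boundary of $Y$ must be bridged, but the way you bridge it does not work. You propose to keep the \emph{fixed} coordinate hyperplanes of the base $\mathbb P^{\underline n}$ as the extra boundary divisors and then invoke Theorem~\ref{thm: canonical-transversalization} to transversalize $s_\star\mathcal F$ with respect to them. Canonical transversalization operates on the \emph{strict} transform, i.e.\ on the closure of the restriction of the quotient to the complement of the boundary. Relative to the full toric boundary of $Y$, that complement is the dense torus, and there is no reason for the support or the associated points of $s_\star\mathcal F$ to avoid the fixed coordinate hyperplanes: components or embedded components of $\mathcal F$ lying over $s(X)\cap H$ for a coordinate hyperplane $H$ are simply discarded by the strict transform (in the extreme case where $s(X)\subset H$ the strict transform vanishes outright, and no strata blowup can repair containment in a boundary divisor). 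This destroys the injectivity of your assignment and, with it, the reduction. Moreover, even when the strict transform is harmless, the asymptotic $K$-weights $\Lambda'$ for the enlarged toric problem involve Euler characteristics of restrictions of $\mathcal F$ to intersections with these \emph{specific} hyperplanes; for a non-transverse configuration these are not computed by the multigraded Hilbert polynomials recorded in $\Lambda$, so $\Lambda'$ is not determined by $\Lambda$ and you cannot quote the toric case of Theorem~\ref{thm: combinatorial-boundedness} for finitely many numerical types.

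The missing idea is genericity: the paper chooses the residual toric divisors (a system of $(n_i+1)$ hyperplanes in each $\mathbb P^{n_i}$ factor) \emph{generically, depending on the sheaf} $\mathcal F$. A Bertini argument then shows that $s_\star\mathcal F$ is already algebraically transverse to the union of $\partial Y$ and these residual divisors on every stratum of the expansion --- no modification, and hence no appeal to Theorem~\ref{thm: canonical-transversalization}, is needed. Transversality guarantees that the derived and underived restrictions agree, so the asymptotic $K$-weights of the resulting toric object are computed from the multigraded Hilbert polynomial data in $\Lambda$ (this is precisely why the $K$-weights are taken with respect to the whole system of line bundles $L_i$), and different generic choices of residual divisors are related by factorwise projective changes of coordinates, so all the resulting quotients live on a single fixed toric variety. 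With that replacement your remaining steps --- injectivity on combinatorial types by restricting the stratification to $\Sigma(X,D)$ --- go through as in the paper.
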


\begin{proof}
Consider tropicalizations of points in the moduli space $\mathsf{Quot}_\Lambda(X|D,\mathcal E)$. We exhibit an injective map from the set of tropicalizations arising from $\mathsf{Quot}_\Lambda(X|D,\mathcal E)$ into a corresponding set of tropicalizations arising from a toric pair, with controlled asymptotics $K$-weights. 

We start with the geometric input from Proposition~\ref{prop: global-toric-embedding}. Let $Y$ be the toric variety arising in the proposition, and 
\[
X\hookrightarrow Y
\]
the strict embedding. Recall that $Y\to \mathbb P^{\underline n}$ is a projective space bundle and we have equipped $Y$ with a fiberwise toric logarithmic structure. 

Consider a point of $\mathsf{Quot}_\Lambda(X|D,\mathcal E)$. It is represented by an algebraically transverse sheaf on an expansion $X_\Gamma$ of $X$ along $D$. Since $X\hookrightarrow Y$ is strict we can express $X_\Gamma$ as a pullback of an expansion
\[
Y_\Gamma\to Y
\]
along $j\colon X\hookrightarrow Y$. A logarithmic quotient $\mathcal F$ of $\mathcal E$ on $X_\Gamma$ pushes forward to a logarithmic quotient $j_\star\mathcal F$ of $j_\star\mathcal E$ on $Y_\Gamma$. Consider the ample line bundle $\mathcal O(1,\ldots,1)$ on $\mathbb P^{\underline n}$. Using the projection formula in $K$-theory, the $K$-weights of this quotient with respect to this ample and all the line bundles making up the constituent line bundles of $Y\to \mathbb P^{\underline n}$ is determined by the $K$-weights of $\mathcal F$.  

We now enlarge the logarithmic structure on $Y$ from $\partial Y$ to make it toric. Fix an algebraically sheaf $\mathcal F$ on $Y_\Gamma\to Y$. Consider the base $\mathbb P^{\underline n}$ of the toric bundle. On a $\mathbb P^{n_i}$ factor of the product, choose $(n_i+1)$ generic hyperplanes. Each choice of these hyperplanes makes $\mathbb P^{n_i}$ isomorphic to a toric variety, with a different torus for each choice. 

Similarly, choosing such a set of generic hyperplanes in each factor, we turn $\mathbb P^{\underline n}$ into a toric variety, and by pulling back $Y\to \mathbb P^{\underline n}$, such a choice of hyperplanes makes $Y$ into a toric variety as well. We refer to this system of hyperplane sections as a {\it system of residual toric divisors}. Note that different choices of toric divisors give rise to isomorphic toric varieties, in fact, they are related by a factorwise projective change of coordinates. 

\noindent
{\bf Claim.} Let $\mathcal H$ be a generic system of residual toric divisors. The sheaf $\mathcal F$ on $Y_\Gamma$ is algebraically transverse with respect to the union of $\partial Y$ and $\mathcal H$. 

The claim follows from a Bertini argument -- given $\mathcal F$ algebraically transverse to $\partial Y$, a generic section of a basepoint free line bundle on $Y$ not contain any sections of $\mathcal F$, and the same is true on any stratum. Repeating this for all the elements forming a system of residual toric divisors proves the claim. We can do this for every for every logarithmic quotient. 

Putting this together, we obtain, for each quotient $\mathcal F$ on $X_\Gamma$, a quotient on an expansion $Y_\Gamma$ of a toric variety, along its full toric boundary, with predictable asymptotic $K$-weights depending only on the starting from the asymptotic $K$-weights of $\mathcal F$. The tropicalization of the sheaf with respect to this larger logarithmic structure $E$ certainly determines the one respect $\partial Y$. We emphasize here that that the choice of residual divisor system $\mathcal H$ will depend on $\mathcal F$, by changing coordinates as above, we can view each of the resulting quotient as an object on a fixed toric variety. 

We have therefore constructed an injective map from the set of all tropicalizations arising from the $(X,D)$-problem into the set of all tropicalizations arising in the $(Y,E)$ problem, and the latter is toric, so the proof of the proposition is complete. 
\end{proof}

\subsection{Finiteness from projections} We continue in the toric context, with cocharacter vector space $N_{\mathbb R}$. 

Let $\Gamma_{\geq r}$ be a polyhedral complex in $N_{\mathbb R}$ with a $\mathbb Z$-weighting on all faces of codimension up to $r$. We require the $K$-balancing condition to hold on the star of all faces of codimension up to $r-1$. We are slightly abusing notation, since we have defined  $\Gamma_{\geq r}$ without reference to a particular $\Gamma$. 

In this section, we will use the projections in Section~\ref{sec: K-projections} to study the following problem -- {\it if we fix the asymptotic $K$-weighted polyhedral complex, what can be said about the faces of possible
\[
\Gamma_{\geq r-1}\to \Gamma_{\geq r}
\]
that are compatible with these data?}

In other words, what are the ways in which the $r-1$-dimensional cells can be added if fix the cells of dimension $r$ and above and the asymptotics are both fixed. 

Furthermore, in this section, we will always assume that $\Gamma_{\geq r-1}$ is minimal in the sense that it is equal to its own canonical coarsening. We start with the following construction. 

\begin{construction}[Slicing $K$-tropicalizations]\label{con: slicing}
Let $\Gamma$ be a $K$-balanced polyhedral complex in $N_{\mathbb R}$. Consider an integral linear projection
\[
\beta \colon N\to L
\]
to a quotient lattice of rank $\ell$. Replace $\Gamma$ with a dilation-subdivision and let $\mathcal L$ be a decomposition of $N_{\mathbb R}$ such that
\[
\Gamma\to \mathcal L
\]
is combinatorially flat. Then for any point $p$ in the interior of an $\ell$-dimensional cell of $\mathcal L$, the fiber of $p$ is a $K$-balanced complex in the latticed vector space\footnote{Strictly speaking, this is a torsor for a latticed vector space. Choose any origin to turn it into a vector space. } $\beta^{-1}(p)$. The $K$-weight on each cell is given by Remark~\ref{rem: intermediate cells}. Precisely, observe that there is a natural bijection between cells of the fiber and the cells of $\Gamma$ that map surjectively onto the cell containing $p$. The cells have been given a weight by the procedure of Remark~\ref{rem: intermediate cells}, and we take the induced one on the fiber.  

Furthermore, we have the following:

\noindent
{\bf Claim}. The asymptotic $K$-weight of this $K$-weighted fiber depends only on the asymptotic $K$-weight of $\Gamma$ and the projection $\beta$. 

\noindent
To see this, first consider the decoration on the unique $0$-dimensional cone of the asymptotic weight of this fiber that we have just described. We can view
\[
\Gamma\to \mathcal L
\]
as defining a degeneration-compactification of the map 
\[
(N\to L)\otimes \mathbb G_m
\]
over $\mathbb A^1$, with the general fiber given by the map of toric varieties defined by recession fans of $\Gamma$ and $\mathcal L$. The formula of Definition~\ref{def: asymptotic-weights} defines a $K$-weight on the general fiber of the domain that specializes to the $K$-theory class defined by $\Gamma$. Furthermore, this general fiber $K$-theory class is defined by the asymptotic $K$-weight, essentially by construction. Similarly, the point on the target defined by the choice of cone is the specialization of the class of a point in the general fiber. By formal properties of $K$-theory classes under specialization, we see that the asymptotic decoration on the $0$-cone is computed from the asymptotic $K$-weight of $\Gamma$ by intersecting with a fiber of this map between general fibers, and therefore can be computed as claimed. 
\end{construction}

We use Construction \ref{con: slicing} in inductive arguments about boundedness. We begin with the following, essentially as a warmup. 

Fix a toric pair $(Y,E)$ with cocharacter lattice $N$, and fix a $K$-weight $k_0$ on the fan $\Sigma_Y$. Let $\Gamma_{\geq r}$ be a truncation below dimension $r$ of a $K$-weighted polyhedral complex -- as noted above, this is a complex with decoration on cones of dimension $r$ and above, satisfying the $K$-balancing at stars of all faces of dimension $r-1$ and above. 

Let $\mathbb F(k_0,\Gamma_{\geq r})$ denote the set of all $K$-weighted polyhedral complexes in $N_{\mathbb R}$ that arise from algebraically transverse quotient sheaves on expansions of $Y$, whose asymptotic $K$-weight is $k_0$, and whose truncation above dimension $r$ is $\Gamma_{\geq r}$. 

\begin{proposition}\label{prop:PluckerBound}
Assume finiteness of $K$-tropicalizations for cocharacter spaces of dimension strictly less than $\dim N_{\mathbb R}$. 

Fix a vertex of $\Gamma_{\geq r}$. The set of linear spans of $r-1$-dimensional faces of elements of $\mathbb F(k_0,\Gamma_{\geq r})$ that meet $V$ is finite. 
\end{proposition}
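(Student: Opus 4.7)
The plan is to bound each linear span $W_F := \mathrm{span}(F)$ by examining its images under a carefully chosen pair of rational linear projections and invoking the inductive hypothesis. After translating $V$ to the origin, $W_F$ becomes a rational $(r-1)$-dimensional subspace of $N_{\mathbb{R}}$, and I will reconstruct it from partial measurements in lower-dimensional $K$-tropicalizations whose combinatorial types are controlled by induction.

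For the main case $r \geq 3$, I would fix two generic rational linear surjections $\beta_1, \beta_2 \colon N \twoheadrightarrow \mathbb{Z}$ whose $1$-dimensional kernels $\ker \beta_i$ are in general position. Applying Construction \ref{con: slicing} to each $\beta_i$, and replacing $\Gamma$ by a dilation-subdivision (which does not change $W_F$), I obtain, for any $p$ in the interior of a top cell of $\mathcal L_i$ adjacent to $\beta_i(V)$, a $K$-balanced polyhedral complex in the $(n-1)$-dimensional affine space $\beta_i^{-1}(p)$. The claim inside Construction \ref{con: slicing} guarantees that the asymptotic $K$-weight of this fiber depends only on $k_0$ and $\beta_i$; moreover this fiber arises as the $K$-tropicalization of a transverse quotient sheaf obtained by restricting the ambient data to the corresponding stratum, a toric pair of cocharacter rank $n-1$. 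The inductive hypothesis then provides a finite set $S_i$ of linear spans of $(r-2)$-dimensional faces of such fibers through the image of $V$.

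Provided $\ker \beta_i \not\subset W_F$, the face $F$ intersects $\beta_i^{-1}(p)$ in an $(r-2)$-dimensional face whose linear span equals $W_F \cap \ker \beta_i \in S_i$. For a generic pair $\beta_1, \beta_2$ the subspaces $W_F \cap \ker \beta_1$ and $W_F \cap \ker \beta_2$ are distinct codimension-one subspaces of $W_F$, and therefore span $W_F$; this injects the set of $W_F$ into the finite set $S_1 \times S_2$. The case $r = 2$ is genuinely different because fiber intersections under rank-one projections collapse to points and record no directional information. Here I would argue via the excess-$\chi$ balancing of Section \ref{sec: K-tropicalizations}: within each $r$-cell (or higher) of $\Gamma_{\geq 2}$ containing $V$, and within the ``floating'' regions of zero $K$-weight, the $1$-cells through $V$ form a traditionally balanced $1$-dimensional tropical fan in the ambient cell with asymptotic data fixed by $k_0$, to which the classical finiteness of $1$-dimensional balanced tropical complexes with fixed recession applies. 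The case $r = 1$ is vacuous.

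The main technical obstacle is uniformity: since $\{W_F\}$ is a priori an infinite collection of rational $(r-1)$-planes, a fixed pair $\beta_1, \beta_2$ may fail to be generic against some $W_F$ containing one of the $\ker \beta_i$. I expect to address this by separating the family at each step. After fixing $\beta_1$, the subfamily of $W_F$ containing $\ker \beta_1$ descends, via $N \twoheadrightarrow N / \ker \beta_1$, to a family of $(r-2)$-dimensional subspaces in a lattice of rank $n-1$ carrying a quotient $K$-tropicalization with asymptotic determined by $k_0$ and $\beta_1$, which is controlled directly by the inductive hypothesis. Iterating the argument on this residual family, with strictly decreasing ambient cocharacter rank, terminates in finitely many steps.
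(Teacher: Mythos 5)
Your strategy---recovering $W_F=\mathrm{span}(F)$ from its intersections with the kernels of two corank-one projections and applying the inductive hypothesis to the sliced complexes---is genuinely different from the paper's argument. The paper bounds the Pl\"ucker coordinates of $W_F$ directly: for each coordinate projection onto an $(r-1)$-dimensional quotient, the pushforward weight on a cell of $\mathcal L$ is determined by the asymptotics (Proposition~\ref{prop: pushforwards}), the number of contributing cells is bounded by slicing and the induction on dimension, the weights are bounded below because the data of $\Gamma_{\geq r}$ is fixed, and the pushforward weight is to leading order a sum of $|\det|$ times weight over contributing cells (Section~\ref{sec: basic-formulae}); hence each determinant is bounded. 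Your route tries to avoid this quantitative step entirely, but it has gaps I do not think are repairable as written.

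The central problem is that two hyperplane slices do not determine an $(r-1)$-plane. The reconstruction $W_F=(W_F\cap\ker\beta_1)+(W_F\cap\ker\beta_2)$ fails exactly when $W_F\cap\ker\beta_1=W_F\cap\ker\beta_2$, which happens whenever $W_F$ meets $\ker\beta_1\cap\ker\beta_2$ in an $(r-2)$-dimensional subspace $H$: every $(r-1)$-plane containing such a fixed $H$ and not contained in either kernel produces the same pair of slices, and there are infinitely many of these for any fixed $(\beta_1,\beta_2)$. You flag only the degeneration $W_F\subset\ker\beta_i$, not this one, and nothing excludes it from the family $\mathbb F(k_0,\Gamma_{\geq r})$. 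Your uniformity fix is also dimensionally incoherent: a surjection $\beta_1\colon N\twoheadrightarrow\mathbb Z$ has kernel of rank $n-1$, so $N/\ker\beta_1$ has rank $1$, and the claimed descent of the bad subfamily to ``$(r-2)$-dimensional subspaces in a lattice of rank $n-1$'' does not parse; the proposal silently switches between corank-one projections (needed for the slicing step, where fibers must be $(n-1)$-dimensional) and rank-one kernels (needed for the descent). Finally, the $r=2$ case cannot rest on ``traditional balancing plus classical finiteness'': the paper stresses that naive balancing fails at embedded components, the excess-$\chi$ lemma applies only when all incident top cells lie in a single cell of $\Gamma_{\geq r}$, and balancing at the single vertex $V$ does not bound edge directions in any case (the pair of rays $\pm(1,k)$ with weight $1$ is balanced for every $k$); the classical finiteness statements need global positivity of weights and fixed unbounded edges, neither of which is available. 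A repaired version of your idea would need something like all $n$ coordinate slices at once, plus a separate induction for planes contained in coordinate hyperplanes, plus a replacement for the $r=2$ argument---at which point the paper's determinant bound via Construction~\ref{con: slicing} and the fixed pushforward weight is the more economical route.
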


\begin{proof}
Consider a face $\tau$ of $\Gamma$ of dimension $r-1$. We will bound the Pl\"ucker coordinates of $\tau^{\sf span}$. To do this it it suffices to show that, for each coordinate subspace projection
\[
\pi\colon N_{\mathbb R}\to L_{\mathbb R},
\] 
the determinant of the induced map $\tau^{\sf span}\to L_{\mathbb R}$ is bounded in absolute value. Let $\Gamma$ be a $K$-weighted polyhedral complex in $\mathbb F(k_0,\Gamma_{\geq r})$. Consider the set of vertices of $\Gamma$ that (i) map to $\pi(V)$ under the map above, and (ii) have an $(r-1)$-dimensional face incident to them that maps with nonzero determinant to $L_{\mathbb R}$. We claim that there is an a priori bound on the number of such vertices in terms of the asymptotic weights. In fact, there is such an a priori bound on the number of faces of any dimension mapping to a given face. 

To see this, observe first that by the $K$-balancing condition implies that, if the $\pi$-image of an $(r-1)$-dimensional cell incident to a point $V'$ in $\Gamma$ has dimension $(r-1)$, then $\pi$ is locally surjective at $V'$. Indeed, this applies from applying the $K$-balancing condition for projections in Section~\ref{sec: K-projections} at the star of $V'$. Now choose a flattening of the map $|\Gamma|\to L_{\mathbb R}$
\[
\Gamma\to\mathcal L
\]
and consider any $(r-1)$-dimensional containing $\pi(V)$. Now slice $\Gamma$ by taking the preimage of a generic point in the interior of this cell. Each vertex satisfying (i) and (ii) above contributes a face in the canonical coarsening of of this fiber. By the discussion in Construction~\ref{con: slicing}, and by the assumption that we know the result in all smaller dimensional ambient spaces, we have an a priori bound on the number of such faces as claimed.

Having bounded the number of such vertices, consider the projection $\Gamma\to\mathcal L$. The truncation below dimension $r$ has been fixed, we have fixed the Euler characteristics of intersections of potential algebraically transverse sheaves with strata of codimension $r$ and higher. We therefore obtain a lower bound on the possible Euler characteristic. 

Now observe that the pushforward weight on the cells of $\mathcal L$ is known from the asymptotic data and the possible integer decorations are bounded below. By the multiplicity discussion in Section~\ref{sec: basic-formulae}, the pushforward weight is, to first order, the absolute value of determinant value on the cell times the weight, up to higher codimension correction terms. In particular, these higher codimension terms are fixed by fixing $\Gamma_{\geq r}$. We therefore obtain an upper bound on the $K$-weight on any $(r-1)$-cell with nonzero determinant, and whose integer $K$-weight is larger than the minimum value. The minimum value can only be obtained as a pullback \cite[Step 2 of proof Lemma 6.3.2]{Logquot}, and so cannot appear in the canonical coarsening. We conclude the required bound. 
\end{proof}

\begin{corollary}\label{corr:JustBddNoStrata}
    Fix a positive integer $s$. There are finitely many combinatorial types of $K$-tropicalization with at most $s$ strata.
\end{corollary}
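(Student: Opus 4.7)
My plan is to deduce Corollary~\ref{corr:JustBddNoStrata} from Proposition~\ref{prop:PluckerBound} via a double induction. The outer induction is on the ambient dimension $\dim N_{\mathbb{R}}$, since Proposition~\ref{prop:PluckerBound} requires the main finiteness statement in strictly smaller cocharacter spaces. The inner induction, within a fixed ambient dimension, descends the tropical dimension filtration $\Gamma_{\geq m} \leftarrow \Gamma_{\geq m-1} \leftarrow \cdots \leftarrow \Gamma_{\geq 0}$ introduced earlier in this section.

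Fix $s$ and the asymptotic $K$-weight datum. Since there are only finitely many isomorphism classes of face poset with at most $s$ faces, it suffices to show that each such poset $\mathcal{P}$ of maximum dimension $m$ is realized by only finitely many combinatorial types. I will begin with the top truncation $\Gamma_{\geq m}$: its cells of maximal dimension satisfy the classical balancing condition, and the recession data together with the asymptotic weight determine the top-dimensional linear spans and multiplicities up to finite choice in the standard tropical fashion (cf. Section~\ref{sec: ordinary-balancing}). So $\Gamma_{\geq m}$ admits only finitely many realizations.

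For the inductive step I will assume $\Gamma_{\geq r}$ is determined up to finitely many types and analyze extensions to $\Gamma_{\geq r-1}$. For each vertex $V$ of $\Gamma_{\geq r}$, Proposition~\ref{prop:PluckerBound} bounds the possible linear spans of the $(r-1)$-dimensional cells of $\Gamma_{\geq r-1}$ incident to $V$ to a finite set, so globally at most $s$ such spans arise. To bound the integer weights on these newly introduced faces, I plan to slice by a generic coordinate projection as in Construction~\ref{con: slicing}: by Remark~\ref{rem: intermediate cells} the pushforward weight on a target cell is an alternating sum of fiber weights with determinant factors, plus universal higher-codimension correction terms. Since the weights on all strata of dimension at least $r$ have been fixed inductively, and since the pushforward weight itself is computed from the asymptotic data alone via Construction~\ref{con: slicing}, the unknown weight on each new face satisfies an affine equation with finitely many integer solutions; minimality of the canonical coarsening excludes the unique value at which the face would be a pullback. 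This produces finitely many combinatorial types at each filtration level, and since there are at most $s$ cells overall the process terminates.

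The main obstacle will be to certify that the ``higher-codimension correction terms'' arising in Remark~\ref{rem: intermediate cells} really depend only on data fixed earlier in the induction, rather than on the weights we are trying to bound. The top-down order of the dimension filtration, combined with the fact that these corrections are supported on \emph{higher-codimension} strata (hence on faces higher in the filtration), is precisely what allows the induction to close; checking this carefully -- in particular, tracking the multiplicity formulae of Section~\ref{sec: basic-formulae} through a non-unit-determinant projection -- is the one step where I would expect to write out explicit computations.
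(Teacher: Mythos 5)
Your proposal takes a much heavier route than this corollary needs, and in doing so it leaves the actual content of the statement unaddressed while assuming results that, in the paper's organization, come later and are proved \emph{using} this corollary. The corollary is a soft counting statement: once the number of strata is capped at $s$, a $K$-tropicalization is encoded by (a) an ordering of its vertices and their positions in $N_{\mathbb R}$, and (b) purely discrete data recording which collections of vertices span local cones together with the linear span $\langle\kappa-\kappa\rangle$ of each such cone; Proposition~\ref{prop:PluckerBound} makes the set of possible spans finite, and the one remaining nontrivial point is that for \emph{fixed} discrete data the continuously varying vertex positions produce only finitely many combinatorial types --- the paper disposes of this by injecting types into $N_X^s\times D$ and citing \cite[Section~2.2.3]{Logquot}. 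Your argument never confronts this step: you reduce to finitely many face posets and then bound spans and weights, implicitly assuming that a poset together with spans and weights pins down the combinatorial type up to finite ambiguity. Since combinatorial types are defined via piecewise linear families, that implication is precisely what requires an argument (or the citation), and without it your induction does not close.

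Beyond that, two of your intermediate claims are not available at this point. The base case assertion that the asymptotic weight determines the top-dimensional spans and multiplicities ``up to finite choice in the standard tropical fashion'' is not standard for $K$-weights (which may be negative and carry embedded lower-dimensional decorations); in the paper this is the content of Propositions~\ref{prop:bddTropHyp} and~\ref{prop: TropCyclesBdd}, which are proved after, and by means of, Corollary~\ref{corr:JustBddNoStrata}, so building them into your base case makes the section's logic circular. Likewise the weight-bounding step via slicing is not where the content of the corollary lies (the statement already caps the number of strata), and as written it is underspecified: the claim that the unknown weight ``satisfies an affine equation with finitely many integer solutions'' depends on verifying that the determinant factor is nonzero and that the higher-codimension correction terms of Remark~\ref{rem: intermediate cells} involve only data fixed earlier in your filtration --- exactly the point you flag but do not carry out. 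The intended argument is the short one: order the vertices, encode each type by its vertex positions plus finite discrete data (finite by Proposition~\ref{prop:PluckerBound}), and invoke the finiteness of types lying over a fixed discrete datum.
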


\begin{proof}
    We will bound the number of pairs consisting of a combinatorial type of $K$-tropicalization, and an order on its vertices. An order on a set of $s$ vertices is equivalently a bijection between vertices and the set $[s] = \{1,...,s\}$. Since we can order the vertices in any combinatorial type, this will complete our proof.

    Observe a $K$-tropicalization is determined by the data
    \begin{enumerate}[(i)]
        \item The location of each of its $s$ vertices in $N_S$.
        \item Discrete data which records the following information. First collections of vertices whose convex hull contain local cones. Second, for each local cone $\kappa$ the data of $\langle \kappa - \kappa \rangle $. By Proposition \ref{prop:PluckerBound} there are finitely many possibilities for the latter. Clearly there are finitely many possibilties for the former. We write $D$ for this finite set.
    \end{enumerate}
    There is thus an injection $$\text{Tropicalizations with $s$ vertices}\hookrightarrow N_X^s\times D.$$
    By \cite[Section 2.2.3]{Logquot}, for fixed $d\in D$ there are finitely many types in the preimage of $N_X^s\times \{d\}$.
\end{proof}

We record a generalization of the result above. We need some terminology first. Given a piecewise linear stratification $\Gamma$ of $N_{\mathbb R}$ and a locally closed stratum $\kappa$ of it, define 
\[
B_\epsilon^\circ(\kappa) = \{p\in N_{\mathbb R}\colon \textnormal{there exists } q\in\kappa \ \ s.t. \ \ |p-q|<\epsilon\}. 
\]
Intuitively, this is a ``tubular neighborhood'' of the stratum. It is, in fact, the star of the stratum in the sense of stratified spaces, but we have spelled it out for metric-minded readers. 

\begin{definition}
Let $\overline \Gamma\to \Gamma$ be a subdivision of piecewise linear stratifications of $N_{\mathbb R}$. The {\it incidence to $\kappa$} is the equivalence class of combinatorial types of stratified spaces
\[
\overline\Gamma\cap B_\epsilon^\circ(\kappa)
\]
over all $\epsilon>0$, where the equivalence relation identifies $\overline\Gamma\cap B_\epsilon^\circ(\kappa)$ and $\overline\Gamma\cap B_{\epsilon'}^\circ(\kappa)$ whenever they coincide under restriction. 
\end{definition}

A face $\kappa$ is said to be incidence to a face $\tau$ if the intersection of $\kappa$ with any sufficiently small ball around that $\tau$ is an element of the incidence. We now state our more refined finiteness statement. 

\begin{proposition}\label{prop: directions-bounded-at-faces}
Assume finiteness of $K$-tropicalizations for cocharacter spaces of dimension strictly less than $\dim N_{\mathbb R}$. Fix a face $F$ of $\Gamma_{\geq r}$ of dimension at most $r-1$. The set of linear spans of $r-1$-dimensional faces of elements of $\mathbb F(k_0,\Gamma_{\geq r})$ that are incident to $F$ is finite. 
\end{proposition}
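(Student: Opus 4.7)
The plan is to reduce to Proposition~\ref{prop:PluckerBound} via a transverse slicing, so that when $\dim F \geq 1$ the ambient lattice drops in rank and we can invoke the inductive hypothesis on $\dim N_{\mathbb R}$. When $\dim F = 0$ there is nothing to prove --- a $0$-dimensional face of $\Gamma_{\geq r}$ is a vertex, and the statement is exactly Proposition~\ref{prop:PluckerBound}. So fix $F$ of dimension $d$ with $1 \leq d \leq r-1$.

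Next, I would set up the slice. Let $F^{\sf span}$ denote the affine span of $F$, and pick an interior point $v \in F$. Let $\pi \colon N_{\mathbb R} \to L_{\mathbb R}$ be the projection onto $L_{\mathbb R} := N_{\mathbb R}/(F^{\sf span}-v)$, and let $p_F = \pi(v)$. Given any $\Gamma \in \mathbb F(k_0, \Gamma_{\geq r})$, after a dilation--subdivision (which does not change the linear spans of faces through $v$) I can find a decomposition $\mathcal L$ of $L_{\mathbb R}$ making $\Gamma \to \mathcal L$ combinatorially flat, with $p_F$ a vertex of $\mathcal L$. Apply Construction~\ref{con: slicing} at $p_F$ to obtain a $K$-weighted polyhedral complex $\Gamma^v$ in the affine slice $\pi^{-1}(p_F) \cong N_{\mathbb R}/F^{\sf span}$, a lattice vector space of dimension $\dim N - d < \dim N$. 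By the claim in Construction~\ref{con: slicing}, the asymptotic $K$-weight of $\Gamma^v$ depends only on $k_0$ and on the projection $\pi$ (i.e., on $F^{\sf span}$), not on $\Gamma$. Similarly, the truncation $\Gamma^v_{\geq r-d}$ is determined entirely by $\Gamma_{\geq r}$, via the universal multiplicity formulas of Section~\ref{sec: basic-formulae}.

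The content of the proposition now lies in the following transfer. A face $\tau$ of $\Gamma$ of dimension $r-1$ incident to $F$ corresponds, in the slice, to a face $\tau \cap \pi^{-1}(p_F)$ of dimension $(r-1)-d = (r-d)-1$ incident to the distinguished vertex $v$ of $\Gamma^v$, and its linear span satisfies
\[
\tau^{\sf span} \;=\; (F^{\sf span}-v) \;+\; (\tau \cap \pi^{-1}(p_F))^{\sf span}.
\]
Since $F^{\sf span}-v$ is fixed, it suffices to bound finitely many possibilities for the span of $\tau \cap \pi^{-1}(p_F)$ as $\tau$ ranges over all candidates in all elements of $\mathbb F(k_0, \Gamma_{\geq r})$. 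This is exactly the vertex case of the proposition in a strictly smaller ambient space, so by the inductive hypothesis on $\dim N_{\mathbb R}$ combined with Proposition~\ref{prop:PluckerBound} applied in the slice to the vertex $v$ of $\Gamma^v$ with asymptotic data and truncation as above, the set of such spans is finite.

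The main obstacle is verifying that the sliced complex really fits into a $\mathbb F$-style set in the quotient lattice --- in particular, that its asymptotics and its $\geq (r-d)$-truncation are fixed independently of the choice of $\Gamma$. This is the role of Construction~\ref{con: slicing} and the multiplicity analysis of Remark~\ref{rem: intermediate cells}; the former already packages the asymptotic invariance, while the latter guarantees that the weights on cells of $\Gamma^v$ of dimension $\geq r-d$ are computed by a universal formula from the decorations of $\Gamma_{\geq r}$. A minor technical point to check is that incidence to $F$ in $\Gamma$ corresponds bijectively to incidence to $v$ in $\Gamma^v$ at the level of faces of the relevant dimension; this is immediate from the combinatorial flatness of $\Gamma \to \mathcal L$ together with the fact that faces incident to $F$ are precisely those whose images under $\pi$ contain $p_F$ in their closure.
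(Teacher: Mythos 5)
Your proposal takes a genuinely different route from the paper, but as written it has gaps that I do not think can be repaired without the machinery the paper develops later. The paper's own proof simply reruns the determinant-bounding argument of Proposition~\ref{prop:PluckerBound}: project to $(r-1)$-dimensional coordinate subspaces, flatten $\Gamma\to\mathcal L$, bound the number of $(r-1)$-cells over a given target cell using the slices of Construction~\ref{con: slicing} (taken over \emph{generic} points of top-dimensional cells of $\mathcal L$) together with the induction on $\dim N_{\mathbb R}$, and then combine the lower bound on $K$-weights coming from the fixed $\Gamma_{\geq r}$ with the asymptotically determined pushforward weight to bound the determinants, hence the Pl\"ucker coordinates, of the spans.

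The first problem with your reduction is the set-up of the slice. With $\pi\colon N_{\mathbb R}\to L_{\mathbb R}=N_{\mathbb R}/(F^{\sf span}-v)$, the fiber $\pi^{-1}(p_F)$ is $v+(F^{\sf span}-v)$, i.e.\ the affine span of $F$ itself, which has dimension $d=\dim F$, not $\dim N - d$; it is not isomorphic to $N_{\mathbb R}/F^{\sf span}$, and the identity $\tau^{\sf span}=(F^{\sf span}-v)+(\tau\cap\pi^{-1}(p_F))^{\sf span}$ is dimensionally inconsistent in general. Even reading you charitably as taking the transverse star at $v$ modulo $F^{\sf span}$, the point $p_F$ is a \emph{vertex} of the flattened target, and Construction~\ref{con: slicing} only covers fibers over interior points of top-dimensional cells of $\mathcal L$; its claim that the asymptotics of the fiber are determined by $k_0$ and the projection is proved by a degeneration argument for generic fibers and does not apply to this special fiber. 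Pinning down the asymptotics and the $\geq(r-d)$-truncation of such a local slice at a face is exactly what the paper does with the complex $\Gamma_\kappa$ in the proof of Theorem~\ref{thm: combinatorial-boundedness}, and there it requires the incidence-boundedness of Proposition~\ref{prop: incidence is bounded} --- which is proved \emph{after}, and using, the present proposition, so your route risks circularity. The second problem is coverage: incidence to $F$ only requires a face to meet every $\epsilon$-neighborhood of $F$, so an $(r-1)$-face of $\Gamma$ may be incident to $F$ while touching $\overline F$ only along a proper face or at a point other than $v$, never containing $F$; such faces do not meet your slice through the chosen interior point $v$, their spans need not contain $F^{\sf span}-v$, and your argument says nothing about them, whereas the paper's global determinant bound handles them uniformly.
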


\begin{proof}
We proceed similarly to the statement above, and bound the Pl\"ucker coordinates of the linear spans of faces incident to $F$. We can do so by projecting onto the $(r-1)$-dimensional coordinate spaces of $N_{\mathbb R}$ and bounding the determinants on the faces. 

Consider an $r$-dimensional cell incident to $F$ and consider a coordinate projection
\[
N_{\mathbb R}\to L_{\mathbb R}
\]
and flatten to a map of polyhedral complexes $\Gamma\to\mathcal L$. There is again an a priori bound, based on the asymptotics, for the number of cells of dimension $(r-1)$ mapping to any given cell of dimension $(r-1)$ in $\mathcal L$. Indeed, these faces correspond to faces in the slice obtained by taking the preimage of a point in the interior of this cell. And these are bounded by the induction hypothesis on the dimension. 

As before, there is a lower bound on the Euler characteristic of any $(r-1)$-cell, since we have fixed all weights on cells of dimension $r$ and larger, and by the same argument as in the previous proposition, we obtain a bound on the determinants of any such face with non-minimal Euler characteristic, and therefore any face appearing in the canonical coarsening. 
\end{proof}

Similarly, we can bound the number of cones incident to a given face. 

\begin{proposition}\label{prop: incidence is bounded}
Assume finiteness of $K$-tropicalizations for cocharacter spaces of dimension strictly less than $\dim N_{\mathbb R}$. Fix a face $F$ of $\Gamma_{\geq r}$ of dimension at most $r-1$. There is a finite upper bound for the number of $r-1$-dimensional faces of an element of $\mathbb F(k_0,\Gamma_{\geq r})$ incident to $F$. 
\end{proposition}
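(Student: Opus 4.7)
The plan is to use the bound on possible linear spans from Proposition~\ref{prop: directions-bounded-at-faces} to reduce the question to counting faces with a fixed span, and then deploy the projection/pushforward balancing argument to give a numerical bound. Since there are only finitely many candidate spans for $(r-1)$-faces incident to $F$, it suffices to bound, for each $(r-1)$-dimensional subspace $V\subset N_\RR$ arising as such a span, the number of faces with span $V$ that are incident to $F$.

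Fix such a $V$ and choose a coordinate projection $\pi\colon N_\RR\to L_\RR$ of rank $r-1$ that restricts to an isomorphism on $V$. After replacing $\Gamma\in\mathbb F(k_0,\Gamma_{\geq r})$ with a dilation-subdivision, flatten $\pi$ to a combinatorially flat map $\Gamma\to\mathcal L$. Every $(r-1)$-face of $\Gamma$ with span $V$ incident to $F$ then projects surjectively onto an $(r-1)$-cell of $\mathcal L$ incident to $\pi(F)$, with expansion determinant bounded in terms of the finitely many possible spans. The count splits into two pieces: bounding the number of cells of $\mathcal L$ incident to $\pi(F)$, and bounding the number of faces of $\Gamma$ mapping onto each such cell.

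For the first piece, slicing $\mathcal L$ by a generic transverse affine subspace near $\pi(F)$ produces a $K$-balanced stratification in an ambient space of strictly smaller dimension, and its combinatorial type is controlled by the inductive hypothesis. For the second piece, by Proposition~\ref{prop: pushforwards} the pushforward weight on each $(r-1)$-cell $\lambda$ of $\mathcal L$ is computed from $k_0$ alone, hence a priori fixed. By Remark~\ref{rem: intermediate cells} and the multiplicity analysis of Section~\ref{sec: basic-formulae}, this pushforward weight decomposes as a contribution from cells of dimension $\geq r$ above $\lambda$ (determined by $\Gamma_{\geq r}$) plus a linear combination over $(r-1)$-faces mapping to $\lambda$ of their $K$-weights times positive expansion determinants. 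Since the minimality hypothesis forces every surviving $(r-1)$-face to carry $K$-weight strictly exceeding the pullback value, each such face contributes at least one to the variable part of the pushforward weight, giving the required upper bound.

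The main obstacle is the possibility of sign cancellation in the pushforward formula that could allow arbitrarily many faces to coexist; this is precisely what the canonical-coarsening hypothesis rules out, since cells that would contribute trivially have already been erased, leaving only strictly positive contributions in the variable part.
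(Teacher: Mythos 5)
Your first reduction (finitely many possible spans via Proposition~\ref{prop: directions-bounded-at-faces}, then count faces with a fixed span $V$) matches the paper, but after that your route diverges from the paper's and has two genuine gaps. The first is your step bounding the number of $(r-1)$-cells of $\mathcal L$ incident to $\pi(F)$. The flattening $\mathcal L$ is an auxiliary decomposition adapted to $\Gamma$, not a $K$-tropicalization arising from a quotient sheaf, so ``slicing $\mathcal L$'' does not produce an object to which the inductive hypothesis applies; indeed the number of cells of $\mathcal L$ near $\pi(F)$ is essentially a shadow of the very faces you are trying to count, so this step is close to circular. The paper avoids the issue entirely: it first quotients $N_{\mathbb R}$ by the linear span of $F$ (so that all faces with span $V$ incident to $F$ sit over a neighbourhood of a single point, the origin), then projects onto the image of $V$, flattens, and invokes only the a priori bound on the number of faces of $\Gamma$ in the fiber of a point of $L'_{\mathbb R}$, which comes from Construction~\ref{con: slicing} together with the inductive hypothesis in lower dimension. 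No count of target cells and no weight positivity is needed there.

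The second gap is in your per-cell counting via the pushforward weight. Canonical coarsening/minimality only guarantees that a surviving $(r-1)$-face carries a weight \emph{different} from the pullback value; the strict inequality ``weight exceeds the pullback value'' that you need is not a consequence of coarsening but of the lower-bound statement the paper imports from \cite[Step 2 of proof of Lemma 6.3.2]{Logquot} (the minimal Euler characteristic is attained only by pullbacks). Since $K$-weights are Euler characteristics and can be negative, without that input negative excesses could cancel in the pushforward formula and your bound on the number of faces evaporates---this is exactly the failure mode you claim coarsening rules out, but it does not. Relatedly, your decomposition ``contribution of cells of dimension $\geq r$ determined by $\Gamma_{\geq r}$ plus a sum over $(r-1)$-faces'' is not well posed as stated: the fiberwise inclusion--exclusion contributions of the dimension-$\geq r$ cells depend on the subdivision $\Gamma$, and only the combination with the internal $(r-1)$-dimensional walls carrying pullback weights is subdivision-invariant; the correct bookkeeping is ``pushforward of the pullback-weighted complex (fixed by the asymptotics) plus $\sum_i(\text{excess}_i)\cdot|\det_i|$''. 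With these two repairs your argument could likely be made to work, but as written it does not establish the proposition, and it is in any case a different mechanism from the paper's fiber-counting proof.
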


\begin{proof}
By Proposition~\ref{prop: directions-bounded-at-faces}, the possible linear spans are finite. So it suffices to show that number of faces incident to $F$ with a given linear span is bounded above. Fix such a linear span. We can again see the finiteness using projections. Consider the quotient
\[
N_{\mathbb R}\to L_{\mathbb R}
\]
by the linear span of $F$ itself. The intersection of all faces with this linear span with the given face $F$ maps to the origin in the quotient. Now project further to $L_{\mathbb R}'$, the image of the fixed linear span above. Now flatten the map $|\Gamma|\to L'_{\mathbb R}$ Since there is an a priori bound on the number of faces of any dimension in a preimage of a point in $L'_{\mathbb R}$, the number such faces is also bounded, as required. 
\end{proof}

\subsection{Balanced cycles} 
A \textit{tropical cycle} of dimension $r$ is a $K$-tropicalization $\Gamma$ such that $\Gamma = \Gamma_{\geq r}$, and which has weight zero on all strata of dimension larger than $r$. A tropical hypersurface is a tropical cycle of codimension one.

We will first bound the number of possibilities for the type of a balanced tropical hypersurface of dimension $n-1$ inside $N_X = \mathbb{Z}^n$. We fix asymptotic data $(\Gamma_\infty,k_\infty)$.

\begin{proposition}\label{prop:bddTropHyp}
    There are finitely many combinatorial types of tropical hypersurface of dimension $n-1$ with asymptotics $(\Gamma_\infty,k_\infty)$.
\end{proposition}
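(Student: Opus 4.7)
The plan is to identify balanced tropical hypersurfaces of dimension $n-1$ in $N_\RR \cong \RR^n$ with fixed asymptotic data with regular polyhedral subdivisions of a lattice polytope determined by the asymptotics, and then to invoke the classical finiteness of such subdivisions.

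First I would extract a Newton polytope from $(\Gamma_\infty, k_\infty)$. Applying Proposition~\ref{prop: pushforwards} to the zero projection $N_\RR \to 0$ gives the classical balancing identity $\sum_\rho k_\infty(\rho) v_\rho = 0$, where $\rho$ ranges over rays of $\Gamma_\infty$ and $v_\rho$ denotes the primitive generator. By the standard duality between balanced weighted fans of codimension one in $N_\RR$ and lattice polytopes in $M_\RR$, there exists a lattice polytope $P \subset M_\RR$, unique up to translation, whose outer normal fan is $\Gamma_\infty$ and whose edge lengths recover $k_\infty$.

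Next I would invoke the classical correspondence between balanced weighted polyhedral complexes of codimension one in $N_\RR$ with prescribed recession data $(\Gamma_\infty, k_\infty)$ and regular polyhedral subdivisions of $P$: each such complex arises as the corner locus of a convex piecewise linear function on $N_\RR$ obtained from the lower envelope of a lift of the lattice points of $P$, and its combinatorial type is determined by the associated regular subdivision. Finiteness of regular subdivisions of $P$ is classical: by Gelfand--Kapranov--Zelevinsky, the secondary fan of $P$ has finitely many cones, each corresponding to a distinct combinatorial type of regular subdivision. Composing the two correspondences yields the proposition.

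The main obstacle is that $K$-tropicalizations permit negative integer weights, while the corner locus correspondence is usually phrased for positive weights. I would address this by separating the problem into bounding the underlying piecewise linear stratification of $N_\RR$ and then bounding the integer weight function. For the first, any candidate stratification with recession fan $\Gamma_\infty$ is refined by the common refinement of regular subdivisions associated to the positive and negative parts of a weight lift; the number of such common refinements is still finite by the secondary fan argument, so there are finitely many underlying stratifications. For the second, once the stratification is fixed, the balancing condition at codimension two faces together with the prescribed $k_\infty$ cuts out a finite set of integer weightings, since propagating balancing outward from unbounded faces of known weight determines the weight on each bounded face up to a cocycle obstruction lying in a finite-dimensional lattice quotient bounded by $|k_\infty|$.
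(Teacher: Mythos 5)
Your first two steps are essentially the paper's argument: pass from $(\Gamma_\infty,k_\infty)$ to its dual polytope $P_{\Gamma_\infty}$ via the Fulton--Sturmfels correspondence, and encode a hypersurface by a dual subdivision of that polytope. Two caveats there. First, your derivation of balancing is off: pushing forward along the zero projection in Proposition~\ref{prop: pushforwards} only returns the total Euler characteristic, and the identity $\sum_\rho k_\infty(\rho)v_\rho=0$ you wrote is the balancing condition for one-dimensional fans; for a codimension-one fan the relevant condition lives at codimension-two cones. This is cosmetic, since the duality you then invoke is exactly the paper's step. Second, and more importantly, the paper does not need regularity or the secondary fan at all: it sends each hypersurface to the subdivision of $P_{\Gamma_\infty}$ obtained by tessellating the local dual polytopes attached to its vertices, observes that the combinatorial type (weights included, since the weights are the lattice edge lengths of the local polytopes) is determined by this subdivision, and then uses the trivial finiteness of the set of \emph{all} subdivisions of a fixed lattice polytope, each face being a subset of its finitely many lattice points. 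Your detour through corner loci of convex liftings and GKZ finiteness is what forces you to confront realizability, and that is where the proposal breaks.

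The genuine gap is in your repair for negative weights. The decomposition of a weighted balanced complex into positive and negative parts is neither canonical nor controlled by $(\Gamma_\infty,k_\infty)$: the two parts only have a fixed \emph{difference} of asymptotic data, so their Newton polytopes can be arbitrarily large (add a common large effective cycle to both parts and nothing in the asymptotics changes). The secondary-fan argument gives finiteness of regular subdivisions of a \emph{fixed} polytope, so invoking it for a family of polytopes that is not bounded by the hypotheses does not yield finitely many underlying stratifications. The second half of the repair has the same problem: with signs allowed, fixing the stratification and imposing classical balancing plus the asymptotics does not cut the weight functions down to a finite set --- for instance two parallel maximal cells carrying weights $+m$ and $-m$ satisfy balancing and leave the asymptotic weights unchanged for every $m$, so there is no a priori bound ``by $|k_\infty|$'' without importing either effectivity or the geometric lower bounds on Euler characteristics that the paper uses elsewhere (and which are not available in the purely combinatorial formulation you are working with). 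To fix the argument along the paper's lines, drop regularity, show that the local dual (possibly virtual) polytopes tessellate $P_{\Gamma_\infty}$ and determine the weights, and conclude with finiteness of subdivisions of the fixed polytope rather than with the secondary fan.
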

\begin{proof}
    Given a fan which is also a tropical cycle $(\Gamma_\infty,k_\infty)$ we write $P_{\Gamma_\infty}$ for its dual polytope. Fulton and Sturmfels show that the map $$\Gamma_\infty \rightarrow P_{\Gamma_\infty}$$ defines a bijection between the set of hypersurfaces with a single vertex at the origin, and the set of polytopes \cite[Theorem 4.2]{IntToricVar}. 
    The polytope associated to each vertex of $\Gamma$ tesselate to define a subdivision of $P_{\Gamma_\infty}$, thus defining a map $$D\colon \text{Tropical hypersurfaces } \rightarrow \text{Subdivisions of } P_{\Gamma_\infty}.$$
    Two tropical hypersurfaces $\Gamma,\Gamma'$ have the same image under $D$ if and only if they have the same combinatorial type.
    But the codomain of $D$ is clearly finite. Indeed each face in a choice of subdivision $P_{\Gamma_\infty}$ is a set of integral points in $P_{\Gamma_\infty}$, and a subdivision is characterised by its faces.
\end{proof}

To control the number of types of tropical cycle of higher codimension, consider a homomorphism $$\pi\colon \mathbb{Z}^n\rightarrow\mathbb{Z}^{k+1}.$$ Define the \textit{pushforward} of a tropical cycle $\Gamma$ of dimension $k$ in $\mathbb{Z}^{k+1}$ to be the tropical cycle $(\Gamma_\pi,k_\pi)$ defined as follows. First, define $\Gamma_\pi$  to be any smooth polyhedral decomposition of $\mathbb Z^{k+1}\otimes\mathbb R$ such that the map
\[
\Gamma\to\Gamma_\pi
\]
is combinatorially flat -- every cell maps surjectively onto a cell. The weight function $k_\pi$ is defined on a $k$-dimensional cell $\sigma$ of $\Gamma_\pi$ is a sum of multiples of weights of $k$-dimensional cells of $\Gamma$ mapping onto $\sigma$. Precisely, if 
\[
\widetilde \sigma_1,\ldots,\widetilde\sigma_t\mapsto \sigma,
\]
then each $\widetilde\sigma_i$ contributes  the weight $k(\widetilde \sigma_i)$ times the absolute value of the determinant of the linear map
\[
\mathsf{Span}(\widetilde \sigma_i)\to\mathsf{Span}(\sigma).
\]

\begin{lemma}
    For any group homomorphism $\pi$ the pushforward cycle is balanced.
\end{lemma}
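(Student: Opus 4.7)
The plan is to verify traditional balancing of $(\Gamma_\pi, k_\pi)$ directly at each $(k-1)$-dimensional cell $\tau$ of $\Gamma_\pi$. Since $\Gamma_\pi$ is a cycle of top dimension $k$ in $\mathbb{R}^{k+1}$, balancing at $\tau$ amounts to the identity
\[
\sum_{\sigma \supset \tau} k_\pi(\sigma)\,\langle v_{\sigma/\tau},\,u\rangle = 0
\]
for every character $u$ of $\mathbb{Z}^{k+1}$ vanishing on $\mathsf{Span}(\tau)$, where $\sigma$ ranges over $k$-cells of $\Gamma_\pi$ containing $\tau$ and $v_{\sigma/\tau}$ is a primitive generator of the rank-one quotient lattice pointing into $\sigma$.

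First I will establish a local dictionary between $\Gamma$ and $\Gamma_\pi$. Because both complexes have dimension $k$ and $\Gamma\to\Gamma_\pi$ is combinatorially flat, any $k$-cell $\tilde\sigma$ of $\Gamma$ whose image contains $\tau$ maps onto its image $\sigma$ by a linear isomorphism of $k$-dimensional spans. As a linear bijection of polytopes, it induces a bijection of facets, so $\tilde\sigma$ contains a unique facet $\tilde\tau$ mapping surjectively onto $\tau$. Passing to rank-one quotients and using multiplicativity of determinants yields the lattice identity
\[
\pi(v_{\tilde\sigma/\tilde\tau}) \equiv \frac{m_{\tilde\sigma}}{m_{\tilde\tau}}\,v_{\sigma/\tau} \pmod{\mathsf{Span}(\tau)},
\]
where $m_{\tilde\sigma}$ and $m_{\tilde\tau}$ are the lattice indices of $\pi$ restricted to $\mathsf{Span}(\tilde\sigma)\cap\mathbb{Z}^n$ and $\mathsf{Span}(\tilde\tau)\cap\mathbb{Z}^n$ respectively, and the quotient $m_{\tilde\sigma}/m_{\tilde\tau}$ is automatically an integer.

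Second I reorganize the balancing sum. Substituting $k_\pi(\sigma) = \sum_{\tilde\sigma\mapsto\sigma} m_{\tilde\sigma}\,k(\tilde\sigma)$ and the identity above, then grouping contributions by the unique facet $\tilde\tau\subset\tilde\sigma$ mapping onto $\tau$, the expression rewrites as
\[
\sum_{\tilde\tau\mapsto\tau} m_{\tilde\tau}\sum_{\tilde\sigma\supset\tilde\tau} k(\tilde\sigma)\,\langle v_{\tilde\sigma/\tilde\tau},\,\pi^*u\rangle,
\]
where the outer sum is over $(k-1)$-cells $\tilde\tau$ of $\Gamma$ mapping surjectively onto $\tau$. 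The pulled-back character $\pi^*u$ vanishes on $\mathsf{Span}(\tilde\tau)$, and $\Gamma$ is a $k$-dimensional tropical cycle and hence traditionally balanced at every $(k-1)$-cell, so each inner sum vanishes and the proposition follows.

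The main obstacle is the bookkeeping in the first step. It reduces to checking two things: that each $k$-cell $\tilde\sigma$ whose image contains $\tau$ contributes exactly once to the reorganized sum, which follows from the facet bijection above, and the determinant identity on rank-one quotients, which is a direct computation.
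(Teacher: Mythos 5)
Your proof is correct in substance, but it takes a genuinely different route from the paper. The paper handles this lemma in two lines: since balancing is a condition on the star of each $(k-1)$-dimensional face, it reduces at once to the case where $\Gamma$ is a fan, where the pushforward construction is literally the pushforward of Minkowski weights (Chow homology classes) from~\cite{IntToricVar}, so balancing is a formal consequence; the paper also notes the lemma is a special case of its $K$-theoretic projection statement, Proposition~\ref{prop: pushforwards}. You instead verify the traditional balancing condition directly at every codimension-one cell $\tau$ of $\Gamma_\pi$: the facet bijection for top-dimensional cells mapping with nonzero determinant, the lattice-index identity $\pi(v_{\tilde\sigma/\tilde\tau})\equiv (m_{\tilde\sigma}/m_{\tilde\tau})\,v_{\sigma/\tau}$ modulo $\mathsf{Span}(\tau)$, regrouping the sum by the facets $\tilde\tau$ upstairs, and balancing of $\Gamma$ at each $\tilde\tau$. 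This is the standard elementary argument in the tropical literature, and its advantage is that it is self-contained, using no toric intersection theory; the paper's argument is shorter and stays inside the machinery it has already assembled. Two small points would tighten your write-up: your opening claim should be restricted to $k$-cells whose image is $k$-dimensional, since a $k$-cell of $\Gamma$ may map onto $\tau$ itself with a drop in dimension; and in the regrouped identity you should say explicitly that such degenerate cells are harmless --- they do not enter $k_\pi$ (zero determinant), and in the inner sums they pair to zero against $\pi^\ast u$ because their image directions lie in $\mathsf{Span}(\tau)$ --- so the rewriting is an honest equality. Note finally that, like the paper, you are using that a tropical cycle in the paper's $K$-balanced sense satisfies the traditional balancing condition around codimension-one faces, which the paper asserts when discussing the dimension filtration.
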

\begin{proof}
The result is in fact a special case of Proposition~\ref{prop: pushforwards}. To say a word more, we note the balancing condition can be checked in the star of each $(k-1)$-dimensional face, so we immediately reduce to the case where $\Gamma$ is a fan. We are then simply computing the definition of the pushforward of Chow homology classes according to~\cite{IntToricVar}. Balancing is a formal consequence. 
\end{proof}

Proposition \ref{prop:PluckerBound} implies that there exists a finite set $W$ of elements in $N_X^\vee = M_X$ such that every stratum in $\Gamma$ is the intersect of sets of the form $$\{x|A(x)-b>0\}$$ for some $A\in W$ and $b\in \mathbb{R}$. We take $W$ a minimal set of linear functions with this property and which also contains all coordinate projections.

Write $S$ for the set of projections to $\mathbb{Z}^{k+1}$ defined by $k+1$ linearly independent elements of $W$. The cardinality of $S$ is at least $\binom{n}{k+1}$ and no more than $\binom{|W|}{k+1}$. 

\begin{lemma}\label{lemma:DeterminedByProjections}
    Let ${\kappa} \subset N_X$ be a stratum of $\Gamma$ of dimension $s$. There is an equality $${\kappa}=\bigcap_{F\in S} F^{-1}(F(\kappa))$$ 
\end{lemma}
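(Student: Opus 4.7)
The inclusion $\kappa \subseteq \bigcap_{F \in S} F^{-1}(F(\kappa))$ is formal: if $z \in \kappa$ then $F(z) \in F(\kappa)$ for every $F \in S$, so $z \in F^{-1}(F(\kappa))$. All the content is in the reverse inclusion. My plan is to fix $x \in \bigcap_{F \in S} F^{-1}(F(\kappa))$ and verify that $x$ satisfies every defining linear relation of $\kappa$.

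The first step is to record the characterization of $\kappa$ afforded by the minimal choice of $W$. By the description of $W$ just above the lemma, each stratum of $\Gamma$ is cut out by relations of the form $A(z) \bowtie b$ with $A \in W$, $b \in \mathbb{R}$, and $\bowtie \in \{<, =, >\}$; in particular
\[
\kappa = \bigcap_{i \in I_\kappa} \{z : A_i(z) \bowtie_i b_i\}
\]
for some finite list $(A_i, b_i, \bowtie_i)_{i \in I_\kappa}$ with $A_i \in W$. To verify $x \in \kappa$ it therefore suffices to verify $A_i(x) \bowtie_i b_i$ for each $i \in I_\kappa$.

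The second step is combinatorial. Because $W$ contains the $n$ coordinate projections of $\mathbb{Z}^n$ and $k + 1 \leq n$, each single $A \in W$ can be extended to $k + 1$ linearly independent elements of $W$: write $A = \sum_j c_j e_j^\vee$, fix any index $j_0$ with $c_{j_0} \neq 0$, and augment $A$ by any $k$ of the coordinate functions $e_j^\vee$ with $j \neq j_0$. The resulting tuple defines a projection $F \in S$ having $A$ as one of its coordinates. Applying this construction to each $A_i$ from the previous paragraph produces $F_i \in S$ whose defining tuple contains $A_i$.

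From $x \in F_i^{-1}(F_i(\kappa))$ there exists $y_i \in \kappa$ with $F_i(x) = F_i(y_i)$, and in particular $A_i(x) = A_i(y_i)$. Since $y_i \in \kappa$ satisfies $A_i(y_i) \bowtie_i b_i$, so does $A_i(x)$. Running this over all $i \in I_\kappa$ yields $x \in \kappa$. The only non-routine point in the argument is the first step: extracting the finite defining list of relations of $\kappa$ from the description of $W$. This is essentially a direct unpacking of the setup preceding the lemma, including the implicit treatment of equalities (so that strata of dimension $s < n$ fit into the description), and is the one place where one must commit to a precise interpretation of the word ``stratum'' in this context.
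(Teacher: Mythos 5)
Your proof is correct and is essentially the paper's argument: both hinge on completing a functional $A\in W$ that appears in a defining (or separating) condition for $\kappa$ to an element of $S$ by adjoining coordinate projections, so that $x\in F^{-1}(F(\kappa))$ yields some $y\in\kappa$ with $A(x)=A(y)$ and the condition transfers. The only difference is presentational --- the paper argues contrapositively via a half-space separating a point $y\notin\kappa$ from $\kappa$, while you verify the defining relations of $\kappa$ directly --- and your reading of the slightly informal description of $W$ (allowing $<,=,>$ so that lower-dimensional strata are covered) matches the intended setup.
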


\begin{proof}
    Elementary set theory ensures that $${\kappa}\subset \bigcap_{F\in S} F^{-1}(F({\kappa})).$$
    For the converse suppose $y\not\in \kappa$ so there is some $A\in W$, $b \in N_X$ such that $y \not\in \{x|A(x)-b> 0\}$ but ${\kappa}\subset \{x|A(x)-b> 0\}$. Choose any $s$ elements $A_1,...,A_{s}$ of $W$ such that $A,A_1,...A_{s}$ are linearly independent, which we may do because $W$ includes coordinate projections. The tuple $(A,A_1,...,A_{s})$ defines an element of $S$, and thus a projection to $\mathbb{Z}^{s+1}$. By construction $y$ lies in a different stratum to the image of $\kappa$ under this element of $S$: indeed the image of $\kappa$ is necessarily a union of strata, and the image of $y$ cannot lie in the image of $\kappa$. 
\end{proof}

\begin{proposition}\label{prop: TropCyclesBdd}
    There are finitely many combinatorial types of tropical cycle with asymptotics $(\Gamma_\infty, k_\infty)$
\end{proposition}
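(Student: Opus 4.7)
The plan is to reduce tropical cycles of dimension $k$ in $N_X = \mathbb Z^n$ to tropical hypersurfaces in $\mathbb Z^{k+1}$ via linear projections, where Proposition~\ref{prop:bddTropHyp} applies directly. Fix asymptotics $(\Gamma_\infty, k_\infty)$. Proposition~\ref{prop:PluckerBound} (granted by the inductive finiteness hypothesis in strictly smaller cocharacter dimensions), together with Proposition~\ref{prop: incidence is bounded}, will produce a single finite set $W \subset M_X$ such that for every tropical cycle $\Gamma$ with these asymptotics every $k$-stratum of $\Gamma$ is cut out by half-spaces with normals in $W$; adjoin the coordinate projections. Let $S$ be the finite collection of projections $\pi \colon \mathbb Z^n \to \mathbb Z^{k+1}$ given by ordered $(k+1)$-tuples of linearly independent elements of $W$.

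For each $\pi \in S$, the pushforward $(\Gamma_\pi, k_\pi)$ is a balanced $k$-dimensional tropical cycle in $\mathbb Z^{k+1}$, that is, a tropical hypersurface. Its asymptotic data depends only on $(\Gamma_\infty, k_\infty)$ and $\pi$, since pushforward commutes with passing to the recession fan. Proposition~\ref{prop:bddTropHyp} then bounds the number of combinatorial types of $\Gamma_\pi$ for each $\pi$, so only finitely many tuples $(\Gamma_\pi)_{\pi \in S}$ can arise as $\Gamma$ varies.

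I then reconstruct $\Gamma$ from the tuple. By Lemma~\ref{lemma:DeterminedByProjections} every stratum of $\Gamma$ has the form $\bigcap_{\pi \in S} \pi^{-1}(\pi(\kappa))$, so enumerating one stratum of each $\Gamma_\pi$ yields a finite list of candidates for the strata of $\Gamma$. Once the strata are fixed, the weight function on the top-dimensional cells is constrained by the pushforward identity $k_\pi(\sigma) = \sum_{\kappa' \mapsto \sigma} |\det \pi|_{\kappa'}|\,k(\kappa')$ for each $\pi \in S$ and each $k$-cell $\sigma$ of $\Gamma_\pi$, supplemented by tropical balancing at every $(k-1)$-dimensional face. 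This is a linear system over $\mathbb Z$ with fixed right-hand side, admitting only finitely many integer solutions.

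The step I expect to be delicate is securing the uniformity of $W$: Proposition~\ref{prop:PluckerBound} is stated locally around a fixed vertex, so I must upgrade it to a bound that does not depend on the specific $\Gamma$. I would achieve this by applying the proposition at each vertex of the shared asymptotic fan $\Gamma_\infty$ and combining with Proposition~\ref{prop: incidence is bounded}: every unbounded $k$-cell of $\Gamma$ has linear span controlled by its recession into $\Gamma_\infty$, while bounded $k$-cells are tethered to $\Gamma_\infty$ via incidence bounds at finitely many asymptotic faces. Consolidating the finitely many local bounds produces the global finite $W$ needed for the argument above.
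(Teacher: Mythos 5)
Your proposal follows the paper's route almost exactly: build the finite set $W$ of normals (Proposition~\ref{prop:PluckerBound}, with coordinate projections adjoined), form the finite set $S$ of projections to $\mathbb Z^{k+1}$, push forward to obtain tropical hypersurfaces whose asymptotics depend only on $(\Gamma_\infty,k_\infty)$ and the projection, bound their types by Proposition~\ref{prop:bddTropHyp}, and recover the strata of $\Gamma$ from the tuple of images via Lemma~\ref{lemma:DeterminedByProjections}. Your worry about uniformity of $W$ is reasonable but is handled in the paper at the same level of detail you propose, so that is not a point of divergence.

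The one place you genuinely deviate is the last step, and it is the weak point. The paper, having bounded the number of strata by intersecting preimages (the common refinement $\mathcal P$) and Lemma~\ref{lemma:DeterminedByProjections}, simply concludes by Corollary~\ref{corr:JustBddNoStrata}, which says a bound on the number of strata bounds the number of combinatorial types. You instead try to pin down the weight function by solving the pushforward identities $k_\pi(\sigma)=\sum_{\kappa'\mapsto\sigma}|\det|\,k(\kappa')$ and assert that this integer linear system with fixed right-hand side has finitely many solutions. That does not follow: finiteness requires the coefficient matrix to have trivial kernel on the weight variables, and injectivity is not clear here --- several $k$-cells of $\Gamma$ can map onto the same cell of $\Gamma_\pi$, cells collapsed by a given projection contribute nothing to that projection's equations, and since $K$-weights are Euler characteristics they may be negative, so no positivity argument bounds individual terms from a bounded sum. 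The repair is immediate and is what the paper does: once the strata are enumerated (finitely many candidates from the tuples of images), invoke Corollary~\ref{corr:JustBddNoStrata} to conclude finiteness of combinatorial types, rather than attempting to solve for the weights directly.
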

\begin{proof}
    Suppose $\Gamma_\infty$ has dimension $s<n$. If $s=n-1$ then the result follows from Proposition \ref{prop:bddTropHyp}.
    Otherwise consider the set of pushforwards of $\Gamma$ along elements of $S$. Write $(\Gamma_F,k_F)$ for the pushforward cycle arising from the projection along $F$ of a cycle $(\Gamma,k)$. 
    
    The asymptotics of $(\Gamma_F,k_F)$ are determined by $(\Gamma_\infty, k_\infty)$ and the map $F$. Thus by Proposition \ref{prop:bddTropHyp} there are finitely many possibilities for the combinatorial type of $(\Gamma_F,k_F)$. It therefore suffices to show that, after fixing the type of every $(\Gamma_F,k_F)$, there are finitely many possible types for $(\Gamma,k)$.

    By Corollary \ref{corr:JustBddNoStrata} it suffices to bound the number of strata appearing in $(\Gamma,k)$.  Write $\mathcal{P}$ for the common refinement over all $F$ of the intersect of the preimage of strata in $\Gamma_F$. The proof is completed if we can show that every stratum in $\Gamma_F$ is a union of strata in $\mathcal{P}$, but this holds by Lemma \ref{lemma:DeterminedByProjections}.
\end{proof}
\subsection{Global boundedness}

\begin{proof}[Proof of Theorem \ref{thm: combinatorial-boundedness}]
    By Proposition \ref{prop:Toric Case Enough} it suffices to handle the toric case. We will prove the following statement for every $n$ and $r$:

\noindent
\textit{There are finitely many possibilities for the type of $(\Gamma_r, k_r)$, whenever $\Gamma$ is the $K$-tropicalization of a point in $\mathsf{Quot}_\Lambda(X \mid D, \mathcal{E})$.}

We first handle the base cases. Where $n=0$ the result is vacuous. When $r=n$ the only possible $(\Gamma,k)$ is a single stratum $\Gamma$; in this situation $k$ adopts a value determined by the asymptotics.

    For the inductive step, suppose our claim is known for some fixed $R$ in the range $N
\geq r\geq R\geq 1$ with $n=N$ and also for all $n< N$ and with any $r$. There is a subdivision $$\varpi\colon \Gamma_{R-1} \rightarrow \Gamma_{R}$$ and we are required to show that there are finitely possible types for $\Gamma_{R-1}$. By Corollary \ref{corr:JustBddNoStrata} it suffices to bound the number of strata in $\Gamma_{R-1}$. We will establish the case $r=0$ at the end, and for now assume $R\geq 2$. There are finitely many possible types for $\Gamma_{R}$ by the inductive hypothesis, and each type has finitely many strata. Thus, it will suffice to fix one type $\Gamma'_R$, one stratum $\kappa$ in $\Gamma'_R$, and bound how many strata of $\Gamma_{R-1}$ have image in $\kappa$.

    Suppose $\kappa$ has dimension $y\geq R$. Choose a map $\pi\colon N_X\rightarrow \mathbb{Z}^y = N_\kappa$ which induces an isomorphism of lattices $\langle\kappa-\kappa\rangle^\mathsf{gp}\rightarrow \mathbb{Z}^{y}$. We construct a balanced tropical cycle $(\Gamma_\kappa,k_\kappa)$ in $N_\kappa$ of pure dimension $R-1$.  

    To construct the stratification $\Gamma_\kappa$ we aim to take the pushforward along $\pi$ of $\Gamma_R\cap B_\epsilon(\kappa)$, and equip it with the $\kappa$-excess weight function. 
    A subtlety arises: the image of $\Gamma_R\cap B_\epsilon(\kappa)$ is contained within an open subset of $N_\kappa$. 
    To overcome this issue, extend in the obvious way any stratum $\tau'=\pi(\tau)$ such that the image of $\pi$ does not contain limit points of $\tau$. Where the extensions of strata 
    $\tau'_1,...\tau'_s$ intersect,
    we define strata of $\Gamma_\kappa$ to be the common refinement. Given a stratum $\gamma$ so formed, write $W_\gamma$ for the set of $\tau_i'$ such that $\gamma$ lies in the extension of $\tau'_i$. For a stratum $\gamma$ which arises from this construction, $k$ adopts the value $\sum_{\tau'\in {W_\gamma}} k^{\mathsf{excess}}(\tau')$, where the sum is over an extension $\tau$ containing $\gamma$.

    The asymptotics of $\Gamma_\kappa$ are determined by three data:
    \begin{enumerate}[(i)]
        \item The asymptotics of $\Gamma_R$, in particular cones which are unbounded in the directions in which $\kappa$ is unbounded.
        \item The incidence to $\kappa$, for which there are finitely many possibilities by Proposition \ref{prop: incidence is bounded}.
        \item The incidence to $\tau$ for each $\tau$ a face of $\kappa$. Once again there are finitely many possibilities by Proposition \ref{prop: incidence is bounded}.
    \end{enumerate}
    We deduce that there are finitely many possible combinatorial types for the asymptotics of $\Gamma_\kappa$. It follows from Proposition \ref{prop: TropCyclesBdd} that there are finitely many possibilities for the combinatorial type of $\Gamma_\kappa$. Since the number of strata in $\Gamma_\kappa$ is at least the number of strata $\tau$ in $\Gamma_R$ such that $\varpi(\tau) \subset \kappa$.

    Finally we handle the case $r=0$. The total Euler characteristic $\chi$ of $(\Gamma_\infty,k_\infty)$ is the inclusion-exclusion sum over all cones $\kappa$ of the weight $k_\infty(\kappa)$. To bound the number of strata in $\Gamma_0$ we need only bound the number of vertices added in the subdivision $$\Gamma_0\rightarrow \Gamma_1.$$ The total Euler characteristic, $\chi$ is the Euler characteristic of $\Gamma_2$ plus the sum of the excess Euler characteristics of these additional vertices. But each vertex we add contributes a positive integer to the excess Euler characteristic, so we obtain a bound for the number of vertices to be added.
\end{proof}

\section{Assembling the pieces}

Fix $(X,D)$, an algebraically flat coherent sheaf $\mathcal E$, and asymptotic $K$-weights $\Lambda$. To clarify the latter we fix, for every stratum of $(X,D)$ including $X$ itself, a Hilbert polynomial. Fix also an ample on $X$.

A geometric point of the logarithmic Quot space $\mathsf{Quot}(X|D,\mathcal E)$ can be represented by a quotient sheaf $\mathcal F$ of the pullback of $\mathcal E$ on an expansion
\[
X_\Gamma\to X.
\]
For each closed stratum of $X_\Gamma$ we obtain a Hilbert polynomial by restricting $\mathcal F$ to this stratum. According to Definition~\ref{def: asymptotic-weights}, we specialize these to obtain an asymptotic polynomial for each stratum of $(X,D)$. 

Let $\mathsf{Quot}_\Lambda(X|D,\mathcal E)$ be the subfunctor of $\mathsf{Quot}(X|D,\mathcal E)$ consisting of families that, fiberwise, can be represented by sheaves on expansions whose asymptotic decorations are given by $\Lambda$. We state a precise version of Theorem~\ref{thm: boundedness-quot-scheme}.

\begin{theorem}
The moduli space $\mathsf{Quot}_\Lambda(X|D,\mathcal E)$ is proper. 
\end{theorem}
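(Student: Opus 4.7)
The plan is to upgrade the already-known separatedness and universal closedness of $\mathsf{Quot}(X|D,\mathcal E)$ to properness by establishing boundedness of the subfunctor $\mathsf{Quot}_\Lambda(X|D,\mathcal E)$. Since properness of a separated and universally closed logarithmic space reduces to finite type of its underlying stack, we aim to cover $\mathsf{Quot}_\Lambda(X|D,\mathcal E)$ by finitely many finite type opens. The decomposition is organized by the $K$-tropicalization attached to each geometric point, as discussed in Sections~\ref{sec: K-tropicalizations} and~\ref{sec: finiteness-K}.

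First, I would invoke Theorem~\ref{thm: combinatorial-boundedness}: the set of combinatorial types of $K$-tropicalizations arising from points of $\mathsf{Quot}_\Lambda(X|D,\mathcal E)$ is finite. This reduces the task to bounding, for a single fixed combinatorial type $\mathscr T$, the locus $\mathsf{Quot}_\Lambda^{\mathscr T}(X|D,\mathcal E)$ of logarithmic quotients whose $K$-tropicalization has type $\mathscr T$. Fix any smooth polyhedral refinement $\Gamma$ of the underlying piecewise linear stratification of $\mathscr T$; this gives a specific expansion $X_\Gamma\to X$. By Theorem~\ref{thm: canonical-transversalization} (and the discussion in Section~\ref{sec: transverse-collapse}), every point of $\mathsf{Quot}_\Lambda^{\mathscr T}(X|D,\mathcal E)$ has a unique representative on $X_\Gamma$ as an algebraically transverse quotient of the pullback of $\mathcal E$: any quotient on a further modification descends, and conversely the $K$-tropicalization of type $\mathscr T$ is refined by $\Gamma$ by construction.

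Next, I would identify this set of representatives with a locally closed subfunctor of an ordinary Quot scheme. Fix an ample line bundle on $X_\Gamma$, for instance the pullback of an ample on $X$ together with a small perturbation by boundary components to make it ample on the expansion. The $K$-weights on the cells of $\mathscr T$ determine, via inclusion–exclusion on the strata of $X_\Gamma$ (Section~\ref{sec: toolbox}), the Hilbert polynomial $P_{\mathscr T}$ of any such algebraically transverse quotient. Therefore the family of algebraically transverse quotients is a subfunctor of the ordinary Grothendieck Quot scheme $\mathsf{Quot}(X_\Gamma,p_\Gamma^\star\mathcal E,P_{\mathscr T})$, which is a projective scheme. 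The algebraic transversality condition and the open condition ``the $K$-tropicalization has type exactly $\mathscr T$'' are locally closed, and so cut out a finite-type subscheme. Finiteness of the set of types $\mathscr T$ then yields that $\mathsf{Quot}_\Lambda(X|D,\mathcal E)$ is covered by finitely many finite-type pieces, hence is itself of finite type.

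The main obstacle is to verify cleanly that fixing the combinatorial type $\mathscr T$ of the $K$-tropicalization, together with the numerical data $\Lambda$, pins down the Hilbert polynomial of $\mathcal F$ on every component and stratum of $X_\Gamma$, and that the condition of having prescribed $K$-tropicalization $\mathscr T$ is indeed locally closed inside the ordinary Quot scheme of $X_\Gamma$. Both points are essentially consequences of the material developed in Sections~\ref{sec: transverse-collapse} and~\ref{sec: K-tropicalizations}: Theorem~\ref{thm: canonical-transversalization} shows that $K$-tropicalizations vary piecewise linearly in families, so the combinatorial type is constructible, while the $K$-weights are computed by Euler characteristics on strata, which are constructible invariants. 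Once this is in place, combining the finite-type conclusion with separatedness and universal closedness of $\mathsf{Quot}(X|D,\mathcal E)$ from~\cite{Logquot} gives properness of $\mathsf{Quot}_\Lambda(X|D,\mathcal E)$, completing the proof.
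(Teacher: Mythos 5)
Your proposal is correct and follows essentially the same route as the paper: finiteness of combinatorial types of $K$-tropicalizations (Theorem~\ref{thm: combinatorial-boundedness}), then for each fixed type a choice of subdividing expansion $X_\Gamma$ on which Theorem~\ref{thm: canonical-transversalization} lets every such logarithmic quotient be represented by an algebraically transverse quotient, realizing that locus as a finite-type (locally closed) piece of an ordinary Quot scheme, and combining finite type with the separatedness and universal closedness from~\cite{Logquot}. The only step you elide is the observation that the asymptotic $K$-weights are locally constant in flat families, so that $\mathsf{Quot}_\Lambda(X|D,\mathcal E)$ itself inherits the valuative criterion from the full logarithmic Quot space; the paper records this in one line at the start of its proof.
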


The proof is now essentially formal -- one result says that there are finitely many expansions coming from Gr\"obner stratifications, and the other says that there is a finite dimensional Quot scheme parameterizing quotients on the components of any fixed expansion. We provide the details.

\begin{proof}
The asymptotic decorations are defined so they are locally constant in flat families, so the inclusion map for the subfunctor above satisfies the valuative criterion. The larger logarithmic Quot space $\mathsf{Quot}(X|D,\mathcal E)$ is shown to be separated and universally closed, so it remains to show that it is of finite type. 

To prove this, note that for each point of $\mathsf{Quot}_\Lambda(X|D,\mathcal E)$ we have an associated $K$-balanced weighted polynomial complex in $\Sigma(X,D)$, with polynomial decorations. For fix $\Lambda$, it follows from Theorem~\ref{thm: combinatorial-boundedness} that there are only finitely many combinatorial types of $K$-tropicalization that can arise. 

Fix a combinatorial type of $K$-tropicalization. The space of $K$-tropicalizations may only be a piecewise linear space rather than a cone complex, but by tropical boundedness, it admits a subdividion by a finite dimensional cone complex, and we can choose one arbitrarily. The choice of combinatorial type defines a family piecewise linear spaces, and we choose arbitrarily a further subdivision to make this representable. 

We have fixed a $K$-tropicalization and therefore an expansion. We now appeal to Theorem~\ref{thm: canonical-transversalization}. The logarithmic space $\mathsf{Quot}_\Lambda(X|D,\mathcal E)$ has a map to the set of $K$-tropicalizations. Let us record exactly how this works. 

A logarithmic quotient is represented by an actual quotient on an expansion $\widetilde X\to X$. The expansion determines a $K$-tropicalization by passing to the Gr\"obner stratification/tropical support. The boundedness question arises because in principle, it is not clear that a logarithmic quotient can be represented scheme theoretically on {\it any} subdivision of the Gr\"obner stratification. 

But applying Theorem~\ref{thm: canonical-transversalization}, given any representation of a logarithmic quotient, we can find an equivalent quotient -- equivalent with respect to the relation in Definition~\ref{def: quot-equivalence} -- on any expansion on a subdivision of $\Gamma$ such that the expansion is normal crossings. Choose such a subdivision. This determines an expansion
\[
X_\Gamma\to X
\]
whose scheme structure depends only on the combinatorial type of the $K$-tropicalization. The locus $\mathsf{Quot}_\Gamma(X|D,\mathcal E)$ that can be represented by algebraically transverse subschemes this fixed expansion $X_\Gamma$ is of finite type. Indeed, by the results of Theorem~\ref{thm: combinatorial-boundedness} the Hilbert polynomial of any quotient sheaf on any component of $X_\Gamma$ has finitely many choices, and so $\mathsf{Quot}_\Gamma(X|D,\mathcal E)$ is a locally closed subscheme in a Quot scheme with fixed Hilbert polynomial. Boundedness follows. 
\end{proof}
 
\bibliographystyle{siam}
\bibliography{LogHilbBoundedness}
 \end{document}